\documentclass[12pt]{article}
\usepackage{amsmath,amsfonts,amsthm,amssymb,mathtools,enumerate}
\usepackage{mathrsfs, graphicx,color,latexsym, tikz, calc
}
\usepackage[numbers]{natbib}
\usepackage{booktabs}
\usepackage{multirow}
\usepackage{longtable}
\usepackage{array}
\usepackage{amsmath}
\usepackage{tikz}
\usetikzlibrary{arrows.meta}
\allowdisplaybreaks

\usepackage[colorlinks=true,
linkcolor=blue,citecolor=blue,
urlcolor=blue]{hyperref}
\usetikzlibrary{shadows}
\usetikzlibrary{patterns,arrows,decorations.pathreplacing}
\usepackage{rotating}
\usepackage{booktabs}
\usepackage{pifont}
\usepackage{floatrow}
\usepackage{caption}
\usepackage{longtable}
\usepackage{enumitem}

\newtheorem{theorem}{Theorem}
\newtheorem{lemma}{Lemma}

\newtheorem{conjecture}{Conjecture}

\newtheorem{corollary}{Corollary}

\newtheorem{claim}{Claim}
\newcommand{\ntriangle}{%
\mathrel{\ooalign{$\triangle$\cr\hidewidth$/$\hidewidth\cr}}%
}

\newcommand{\ntri}{\mathrel{\rlap{\kern 1pt$\not$}\Delta}}

\title{\bf \Large On the sum of the two largest eigenvalues of the curl-curl operator on graphs}

\author{{Yueli Han$^a$, \ \ Lu Lu$^{a,}$\footnote{Corresponding author.}\setcounter{footnote}{-1}\footnote{E-mail address: hanyueli2022@163.com (Y. Han), lulumath@csu.edu.cn (L. Lu), jfwang@sdut.edu.cn(J. Wang).}, \ \ Jianfeng Wang$^b$}\\[2mm]
\footnotesize $^a$School of Mathematics and Statistics, HNP-LAMA, Central South University\\
\footnotesize Changsha, Hunan, 410083, China\\
\footnotesize $^b$School of Mathematics and Statistics, Shandong University of Technology\\
\footnotesize Zibo, Shandong, 255049, China}

\date{ }
\begin{document}
	\maketitle
	\begin{abstract}
The Grone--Merris conjecture, proved by Bai in~2011, states that the spectrum of the graph Laplacian $\Delta_0 = -\operatorname{div}\operatorname{grad}$ is majorized by the conjugate of the vertex degree sequence.
Duval and Reiner proposed a simplicial complex analogue of this statement. On a graph, where triangles serve as $2$-simplices, their conjecture reduces to the assertion that the spectrum of $\operatorname{curl}^*\operatorname{curl}$ is majorized by the conjugate of the second-order degree sequence, which records the number of triangles containing each vertex.

We prove that the sum of the two largest eigenvalues of $\operatorname{curl}^*\operatorname{curl}$ does not exceed the sum of the first two entries of that conjugate sequence.
This confirms the first two majorization inequalities predicted by Duval and Reiner for $\operatorname{curl}^*\operatorname{curl}$.
As a corollary, we obtain upper bounds for the two largest eigenvalues of the full graph Helmholtzian $\Delta_1 = -\operatorname{grad}\operatorname{div} + \operatorname{curl}^*\operatorname{curl}$.
The same result extends to the up-Laplacian of any $3$-family, yielding a concrete step towards the Duval--Reiner conjecture in dimension~$1$.
		\end{abstract}
	
	{{\bf Keywords:} Graph Laplacian, Curl-curl operator, Graph Helmholtzian}
	
	{{\bf MSC(2020).}  05C50, 05C65, 05E45.}


\section{Introduction}

Let $G$ be a simple graph with vertex set $V(G)$ and edge set $E(G)$, and let $T(G)$ denote the set of triangles in $G$.
We consider real-valued functions
\[
f:  V(G)\to\mathbb{R},\qquad 
X: V(G)\times V(G)\to\mathbb{R},\qquad 
\Phi:  V(G)\times V(G)\times V(G)\to\mathbb{R},
\]
where $X$ and $\Phi$ are alternating on $E(G)$ and $T(G)$, respectively. More precisely,
\[
X(v_i,v_j)=
\begin{cases}
-X(v_j,v_i), & \text{if } \{v_i,v_j\}\in E(G),\\
0, & \text{otherwise},
\end{cases}
\]
and for every $\{v_i,v_j,v_k\}\in T(G)$,
\[
\Phi(v_i,v_j,v_k)=\Phi(v_j,v_k,v_i)=\Phi(v_k,v_i,v_j)
=-\Phi(v_j,v_i,v_k)=-\Phi(v_i,v_k,v_j)=-\Phi(v_k,v_j,v_i),
\]
while $\Phi(v_i,v_j,v_k)=0$ if $\{v_i,v_j,v_k\}\notin T(G)$.
We denote by $L^2(V(G))$, $L^2_{\wedge}(E(G))$ and $L^2_{\wedge}(T(G))$ the spaces of functions of the above three types; the subscript $\wedge$ emphasizes the alternating property.

In the language of algebraic topology, $f$, $X$ and $\Phi$ are $0$-, $1$- and $2$-cochains, respectively. They can be viewed as discrete analogues of differential forms on a smooth manifold \cite{FWW1983} and are often called discrete differential forms \cite{DLM2005,DHK2011}, with $f$, $X$ and $\Phi$ regarded as $0$-, $1$- and $2$-forms on $G$.
We equip these spaces with the standard inner products
\[\left\{\begin{array}{l}
 \langle f,g\rangle_{L^2(V(G))} =\sum_{v\in V(G)}f(v)g(v),  \\[2mm]
   \langle X,Y\rangle_{L^2_{\wedge}(E(G))} =\sum_{(v_i,v_j)\in E(G)}X(v_i,v_j)Y(v_i,v_j),\\[2mm]
   \langle \Phi,\Psi\rangle_{L^2_{\wedge}(T(G))} =\sum_{(v_i,v_j,v_k)\in T(G)}\Phi(v_i,v_j,v_k)\Psi(v_i,v_j,v_k),
\end{array}\right.\]
for any $f,g\in L^2(V(G))$, $X,Y\in L^2_{\wedge}(E(G))$ and $\Phi,\Psi\in L^2_{\wedge}(T(G))$.

We now introduce the discrete differential operators that generalize gradient, curl and divergence from vector calculus; a detailed exposition can be found in \cite{LHL2020}.
The \emph{graph gradient} $\operatorname{grad} :   L^2(V(G))\to L^2_{\wedge}(E(G))$ is defined by
\[
(\operatorname{grad}f)(v_i,v_j)=
\begin{cases}
f(v_j)-f(v_i), & \text{if } \{v_i,v_j\}\in E(G),\\
0, & \text{otherwise}.
\end{cases}
\]
The \emph{graph curl} $\operatorname{curl} :   L^2_{\wedge}(E(G))\to L^2_{\wedge}(T(G))$ is defined by
\[
(\operatorname{curl}X)(v_i,v_j,v_k)=
\begin{cases}
X(v_i,v_j)+X(v_j,v_k)+X(v_k,v_i), & \text{if } \{v_i,v_j,v_k\}\in T(G),\\
0, & \text{otherwise}.
\end{cases}
\]
The \emph{graph divergence} $\operatorname{div} :   L^2_{\wedge}(E(G))\to L^2(V(G))$ is given by
\[
(\operatorname{div}X)(v_i)=\sum_{v_j\in V(G)}X(v_i,v_j),
\]
where the sum is over all vertices, but the alternating property of $X$ ensures that only neighbors contribute.

Associated with these discrete differential operators are two fundamental Hodge-type operators.
The first is the \emph{graph Laplacian} $\Delta_0 :  L^2(V(G))\to L^2(V(G))$, defined by
\[
\Delta_0 = -\operatorname{div}\operatorname{grad},
\]
which serves as a graph-theoretic analogue of the Laplace operator (cf.\ \cite[Lemma~5.6]{LHL2020}).
The second is the \emph{graph Helmholtzian} \cite{JLYY2011},
\[
\Delta_1 = -\operatorname{grad}\operatorname{div} + \operatorname{curl}^*\operatorname{curl},
\]
which is a discrete analogue of the vector Laplacian.
Here $\operatorname{curl}^* :  L^2_{\wedge}(T(G))\to L^2_{\wedge}(E(G))$ is the adjoint of $\operatorname{curl}$ with respect to the inner products on $L^2_{\wedge}(E(G))$ and $L^2_{\wedge}(T(G))$.
Using the adjoint relation
\[
\langle \operatorname{curl}X,\Phi\rangle_{L^2_{\wedge}(T(G))}
= \langle X,\operatorname{curl}^*\Phi\rangle_{L^2_{\wedge}(E(G))},
\]
one obtains the explicit formula
\[
(\operatorname{curl}^*\Phi)(v_i,v_j)=
\begin{cases}
\displaystyle\sum_{v_k\in V(G)}\Phi(v_i,v_j,v_k), & \text{if } \{v_i,v_j\}\in E(G),\\
0, & \text{otherwise},
\end{cases}\]
for any $\Phi\in L^2_{\wedge}(T(G))$.

For a linear operator $\mathcal{A}$, let $\mathbf{s}(\mathcal{A})$ denote the weakly decreasing rearrangement of its eigenvalues (counting multiplicities).
The nilpotency conditions $\operatorname{curl}\operatorname{grad}=0$ and $\operatorname{div}\operatorname{curl}^*=0$ (see \cite[Theorem~5.7]{LHL2020}) imply
\[
(-\operatorname{grad}\operatorname{div})\circ(\operatorname{curl}^*\operatorname{curl}) = 0 = (\operatorname{curl}^*\operatorname{curl})\circ(-\operatorname{grad}\operatorname{div}).
\]
Consequently, the nonzero spectrum of $\Delta_1$ is the multiset union of the nonzero spectra of $-\operatorname{grad}\operatorname{div}$ and $\operatorname{curl}^*\operatorname{curl}$,
\begin{equation}\label{eq-01}
\mathbf{s}(\Delta_1) \stackrel{\circ}{=} \mathbf{s}(-\operatorname{grad}\operatorname{div}) \uplus \mathbf{s}(\operatorname{curl}^*\operatorname{curl}),
\end{equation}
where $\uplus$ denotes multiset union and $\stackrel{\circ}{=}$ means equality up to zero eigenvalues.
Moreover, by standard linear algebra, we get
\begin{equation}\label{eq-02}
\mathbf{s}(-\operatorname{grad}\operatorname{div}) \stackrel{\circ}{=} \mathbf{s}(-\operatorname{div}\operatorname{grad}) = \mathbf{s}(\Delta_0),
\end{equation}
and the operators $\operatorname{curl}^*\operatorname{curl}$, $-\operatorname{grad}\operatorname{div}$ and $\Delta_0$ are all self-adjoint and positive semidefinite; hence all eigenvalues involved are real and nonnegative.

Let $\mathbf{x}=(x_1,\ldots,x_n)$ and $\mathbf{y}=(y_1,\ldots,y_n)$ be two non-negative sequences which are ordered non-increasing. We say $\mathbf{x}$ is \emph{majorized} by $\mathbf{y}$, denoted $\mathbf{x}\prec \mathbf{y}$, if $\sum_{i=1}^kx_i\le \sum_{i=1}^ky_i$ for $1\le k\le n$ and $\sum_{i=1}^nx_i=\sum_{i=1}^ny_i$. The \emph{conjugate sequence} of $\mathbf{x}$, denoted by $\mathbf{x}^\top=(x_1^\top,\cdots,x_n^\top)$, is defined by $x_i^\top=|\{j\mid x_j\ge i\}|$. In 1994, Grone and Merris \cite{GM1994} conjectured that the eigenvalues of the graph Laplacian $\Delta_0$ are majorized by the conjugate of the vertex degree sequence; this was proved by Bai \cite{B2011} in 2011.
Duval and Reiner \cite{DR2002} proposed a far-reaching analogue for simplicial complexes, formulated in terms of boundary operators.
\begin{conjecture}[{\cite[Conjecture~1.2]{DR2002}}]\label{conj-1}
Let $K$ be a $k$-family on vertex set $V(K)$ and let $\mathbf{s}=(s_1\ge s_2\ge\cdots)$ be the nonincreasing sequence of nonzero eigenvalues of $\partial_{k-1}\partial_{k-1}^*$. Let $\mathbf{d}=(d_1\ge d_2\ge\cdots)$ be the vertex degree sequence of $K$.
Then $\mathbf{s}\prec \mathbf{d}^\top$, where $\prec$ denotes majorization.
\end{conjecture}
For a graph $G$, the edge set $E(G)$ is exactly a $2$-family. Therefore, $\partial_{1}\partial_{1}^{*}$ coincides with the graph Laplacian $\Delta_0 = -\operatorname{div}\operatorname{grad}$, thereby recovering the Grone--Merris conjecture.
Also, the triangle set $T(G)$ is a $3$-families, and the operator $\partial_{2}\partial_{2}^{*}$ for such family coincides with $\operatorname{curl}^{*}\operatorname{curl}$. Thus, to make progress on Conjecture~\ref{conj-1} beyond the case $k=2$, we focus on the operator $\operatorname{curl}^*\operatorname{curl}$ and, more generally, on the graph Helmholtzian $\Delta_1$.
We establish a sharp upper bound for the sum of the two largest eigenvalues of $\operatorname{curl}^*\operatorname{curl}$, and then transfer this bound to $\Delta_1$.

For a vertex $v\in V(G)$, let $\deg_G(v)$ be the standard degree (number of incident edges) and $\deg_G^{(2)}(v)$ be the second-order degree, i.e., the number of triangles containing $v$.
Denote the conjugate sequences of the vertex degree sequence and the second-order degree sequence by $d^\top(G)$ and $D^\top(G)$, respectively. Explicitly, for $i\ge 1$,
\[
d_i^\top(G) = \bigl|\{v\in V(G): \deg_G(v)\ge i\}\bigr|,\qquad
D_i^\top(G) = \bigl|\{v\in V(G): \deg_G^{(2)}(v)\ge i\}\bigr|.
\]

Our first main result is the following majorization-type inequality for $\operatorname{curl}^*\operatorname{curl}$.
\begin{theorem}\label{thm-main-1}
For any graph $G$, we have
\begin{equation}\label{eq-m-1}
 \lambda_1\left(\operatorname{curl}^*(G)\operatorname{curl}(G)\right) + \lambda_2(\operatorname{curl}^*(G)\operatorname{curl}(G)) \le D_1^\top(G) + D_2^\top(G),   
\end{equation}
where $\lambda_i\left(\operatorname{curl}^*(G)\operatorname{curl}(G)\right)$ denotes the $i$-th largest eigenvalue of $\operatorname{curl}^*\operatorname{curl}$.
\end{theorem}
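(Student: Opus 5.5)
We work with the nonzero spectrum only. Since $\mathbf{s}(\operatorname{curl}^*\operatorname{curl})\stackrel{\circ}{=}\mathbf{s}(\operatorname{curl}\operatorname{curl}^*)$, it suffices to bound the two largest eigenvalues of the triangle operator $M=\operatorname{curl}\operatorname{curl}^*$ on $L^2_\wedge(T(G))$. Fix an orientation of each triangle. Then $M$ has diagonal entries $3$, and off-diagonal entries $\pm1$ for pairs of triangles sharing an edge (the sign records whether the induced orientations agree), and $0$ otherwise. Vertices lying in no triangle contribute nothing to either side, so we delete them. After this, $D_1^\top(G)=n$ is the number of remaining vertices and $D_2^\top(G)$ is the number of vertices lying in at least two triangles.

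The main tool will be Ky Fan's principle:
\[
\lambda_1+\lambda_2=\max\bigl\{\|\operatorname{curl}^*\Phi\|^2+\|\operatorname{curl}^*\Psi\|^2 : \Phi,\Psi\ \text{orthonormal in } L^2_\wedge(T(G))\bigr\}.
\]
We rewrite $\|\operatorname{curl}^*\Phi\|^2=\sum_{\{i,j\}\in E}\bigl(\sum_k\Phi(i,j,k)\bigr)^2$ as a sum of local contributions, one per vertex. Each triangle $\{i,j,k\}$ gives a diagonal contribution of $3\Phi(i,j,k)^2$, which we split equally among its three vertices. Each cross term $2\Phi(i,j,k)\Phi(j,i,l)$ for an edge $\{i,j\}$ is split between the two endpoints $i$ and $j$.

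The contribution at a vertex $v$ is then a quadratic form of the restriction of $\Phi$ to the star of $v$. This star is naturally a $1$-cochain on the link graph $\mathrm{lk}(v)$ (vertices are the neighbours of $v$, edges are $\{j,k\}$ with $\{v,j,k\}\in T(G)$). In this language the local form is essentially the signless-Laplacian-type form of $\mathrm{lk}(v)$ on its edges. This is the analogue of the edge-wise decomposition used in the classical proofs of the first Grone--Merris inequalities.

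From this decomposition we will derive two bounds.

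First, the known bound $\lambda_1(\operatorname{curl}^*\operatorname{curl})\le n$ (Duval--Reiner type). We prove it by showing that for a unit $\Phi$, the sum of the vertex contributions is at most $n$ times the maximal weight. This is the analogue of $\lambda_1(\Delta_0)\le n$ and can alternatively be obtained via the cone construction.

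Second, for a pair $\Phi\perp\Psi$, we bound the sum of the two quadratic forms vertex by vertex. If $\deg^{(2)}(v)=1$, the star of $v$ is a single triangle $t$. The local contribution of $v$ to $\|\operatorname{curl}^*\Phi\|^2+\|\operatorname{curl}^*\Psi\|^2$ is then governed by $\Phi(t)^2+\Psi(t)^2\le1$ (a $2\times 2$ projection bound), so $v$ costs at most $1$. If $\deg^{(2)}(v)\ge2$, we allow $v$ a budget of $2$ and use that the two largest eigenvalues of the local form sum to at most $2$ after the $1/3$--$1/2$ normalization.

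Summing over vertices would give $\lambda_1+\lambda_2\le |\{v:\deg^{(2)}(v)=1\}|+2\,|\{v:\deg^{(2)}(v)\ge2\}| = D_1^\top+D_2^\top$.

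The main obstacle is making the local normalization honest. The naive equal splitting of cross terms need not yield local forms whose top eigenvalue is at most $1$ (respectively, top two eigenvalues summing to at most $2$). Large links could a priori exceed the budget, and we will need to transfer surplus from vertices with large $\deg^{(2)}$ to others.

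Our first attempt will be to choose the splitting weights adaptively. For each edge $\{i,j\}$, we assign its cross terms to the endpoint with larger second-order degree, mirroring the Grone--Merris "charge to the larger degree" trick. We then check that with this rule every local form is dominated by $\mathrm{diag}$ of the budget.

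If this fails, the fallback is induction on $|T(G)|$ via interlacing. Deleting a triangle is a rank-one downdate of $M$, and we would track how $D_1^\top+D_2^\top$ drops. The delicate case is when deleting a triangle decreases neither $D_1^\top$ nor $D_2^\top$. That case would have to be handled by choosing the deleted triangle to contain a vertex of second-order degree $1$ or $2$, when such a vertex exists. When no such vertex exists, we instead treat the case where every vertex has $\deg^{(2)}\ge 3$ directly, using $\lambda_1+\lambda_2\le 2n$, which follows from the first bound.
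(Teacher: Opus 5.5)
There is a genuine gap, and it is in the main step: no splitting rule can make the vertex-by-vertex budget work. However you distribute the diagonal terms $3\Phi(t)^2$ and the cross terms, each local form $Q_v$ depends only on the restriction $\Phi|_{\operatorname{st}(v)}$ to the triangles through $v$.

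Test this on $G=K_n$ with $n\ge 4$. There $\mathcal{B}\mathcal{B}^\top+\mathcal{C}^\top\mathcal{C}=nI$, so $\lambda_1=\lambda_2=n$. This is an equality case of the theorem, since $D_1^\top=D_2^\top=n$, and every vertex lies in $V_2$. Your requirement $Q_v(\Phi)+Q_v(\Psi)\le 2$ for all orthonormal pairs forces $Q_v(\Phi)\le 2\|\Phi|_{\operatorname{st}(v)}\|^2$; take $\Psi$ supported on a triangle avoiding $v$. Now let $\Phi$ be a unit top eigenvector. Since each triangle lies in exactly three stars,
\[
n=\lambda_1=\sum_v Q_v(\Phi)\le 2\sum_v\|\Phi|_{\operatorname{st}(v)}\|^2=6,
\]
which is false for $n\ge 7$.

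Transferring surplus cannot rescue this. All vertices of $K_n$ are equivalent, and averaging any redistribution over the symmetric group gives, by convexity, a symmetric one with no larger total. Concretely, your equal splitting gives $Q_v(\Phi)=\tfrac12\sum_{e\ni v}(\operatorname{curl}^*\Phi)(e)^2$. This is half the Laplacian form of $\mathrm{lk}(v)$ on its edges, and in $K_n$ its top eigenvalue is $(n-1)/2$.

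The same obstruction rules out a star-local proof of $\lambda_1\le n$: budgets of size $1$ would give $\lambda_1\le 3$. Bounds of $D^\top$-type are global. The bound $\lambda_1\le D_1^\top$ comes from seeing $\mathcal{C}(G)\mathcal{C}(G)^\top$ as a principal submatrix of the corresponding matrix for the complete graph on $V(G)$. You should simply quote it from Duval--Reiner.

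Your fallback does not supply the missing mechanism either. Removing a triangle $t$ changes $\mathcal{C}^\top\mathcal{C}$ by the rank-one matrix $c_tc_t^\top$, where $c_t$ is the row of $\mathcal{C}$ indexed by $t$ and $\|c_t\|^2=3$. Interlacing and Ky Fan therefore only tell you that $\lambda_1+\lambda_2$ drops by at most $3$. Meanwhile $D_1^\top+D_2^\top$ often drops by only $1$: this happens whenever $t$ has one vertex of second-order degree $1$ and two vertices lying in at least three triangles. Choosing $t$ through a vertex of second-order degree $1$ or $2$ does not prevent this.

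The paper inducts only on triangles that share no edge with any other triangle, namely those with two vertices in $V_1$. For these the spectrum splits off exactly one eigenvalue $3$. So the top two eigenvalues are unchanged once $\lambda_2\ge 4$, while $D_1^\top+D_2^\top$ drops by at least $2$.

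The configuration your plan has no tool for is exactly the remaining one: $V_1\neq\varnothing$ and every triangle meets $V_1$ in at most one vertex. In other words, pendant triangles hang on edges $e_1,\dots,e_k$ of $G[V_2]$. The paper handles it as follows.
\begin{enumerate}
\item It splits $T=T_1\sqcup T_2$ and applies Ky Fan.
\item The $T_2$ part lives on $V_2$ and contributes at most $2D_2^\top$.
\item The $T_1$ part is block diagonal with blocks $2I+J$ and contributes exactly $a_1+a_2+4$. This settles every case with $\sum_{i\ge 3}a_i\ge 4$, in particular $k\ge 6$.
\item The case $k\le 5$ is reduced by interlacing to $K_n$ with pendant triangles. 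It is then treated by a secular-equation argument based on $\mathcal{B}\mathcal{B}^\top+\mathcal{C}^\top\mathcal{C}=nI$.
\item In that argument, the shifted eigenvalues $\lambda-n$ are fixed points of the eigenvalue branches of a $5\times 5$ matrix $M(x)$ with trace $\sum a_i$. Its top branch is non-increasing, which gives the bound.
\end{enumerate}
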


Combining \eqref{eq-01} and \eqref{eq-02}, we have
\begin{equation}\label{eq-l-a-1}
  \mathbf{s}(\Delta_1) \stackrel{\circ}{=} \mathbf{s}(\Delta_0) \uplus \mathbf{s}(\operatorname{curl}^*\operatorname{curl}).  
\end{equation}
Moreover, \[d_1^\top(G)\ge d_2^\top(G)\ge D_1^\top(G) \ge D_2^\top(G).\]
This spectral decomposition, together with the above inequalities,  allows us to control the top eigenvalues of the Helmholtzian.
\begin{corollary}\label{thm-main-2}
Let $G$ be a graph. Then the sum of the two largest eigenvalues of $\Delta_1$ satisfies the following bounds.
\begin{enumerate}
\item[(i)] If both $\lambda_1(\Delta_1)$ and $\lambda_2(\Delta_1)$ belong to $\mathbf{s}(\operatorname{curl}^*\operatorname{curl})$, then
\[
\lambda_1(\Delta_1)+\lambda_2(\Delta_1) \le D_1^\top(G)+D_2^\top(G).
\]
\item[(ii)] If neither $\lambda_1(\Delta_1)$ nor $\lambda_2(\Delta_1)$ belongs to $\mathbf{s}(\operatorname{curl}^*\operatorname{curl})$, then
\[
\lambda_1(\Delta_1)+\lambda_2(\Delta_1) \le d_1^\top(G)+d_2^\top(G).
\]
\item[(iii)] Otherwise, $\lambda_1(\Delta_1)+\lambda_2(\Delta_1) \le d_1^\top(G)+D_1^\top(G).$
\end{enumerate}
\end{corollary}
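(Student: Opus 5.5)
The statement immediately above is Corollary~\ref{thm-main-2}. I would prove it by combining the spectral decomposition \eqref{eq-l-a-1}, Theorem~\ref{thm-main-1}, and the Grone--Merris inequalities (Bai's theorem) for $\Delta_0$. By \eqref{eq-l-a-1}, every nonzero eigenvalue of $\Delta_1$ is either an eigenvalue of $\Delta_0$ or an eigenvalue of $\operatorname{curl}^*\operatorname{curl}$. The multiset union respects multiplicities, so $\lambda_1(\Delta_1)$ and $\lambda_2(\Delta_1)$ can be assigned to distinct copies in the union. If $\lambda_2(\Delta_1)=0$, the bounds follow trivially from the single-eigenvalue estimates below. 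So I assume both are nonzero and split into the three cases of the statement, according to which spectrum each copy is taken from.

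In case (i), both eigenvalues come from $\mathbf{s}(\operatorname{curl}^*\operatorname{curl})$. They occupy two distinct positions in that ordered sequence, so their sum is at most $\lambda_1(\operatorname{curl}^*\operatorname{curl})+\lambda_2(\operatorname{curl}^*\operatorname{curl})$. Theorem~\ref{thm-main-1} then gives the bound $D_1^\top(G)+D_2^\top(G)$.

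In case (ii), both eigenvalues come from $\mathbf{s}(\Delta_0)$. By the same argument, their sum is at most $\lambda_1(\Delta_0)+\lambda_2(\Delta_0)$. Bai's theorem gives $\mathbf{s}(\Delta_0)\prec d^\top(G)$, and the $k=2$ partial-sum inequality yields $d_1^\top(G)+d_2^\top(G)$.

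In case (iii), one eigenvalue lies in each spectrum. Then $\lambda_1(\Delta_1)+\lambda_2(\Delta_1)\le \lambda_1(\Delta_0)+\lambda_1(\operatorname{curl}^*\operatorname{curl})$. The $k=1$ case of Grone--Merris gives $\lambda_1(\Delta_0)\le d_1^\top(G)$. For the second term I need the single-eigenvalue bound $\lambda_1(\operatorname{curl}^*\operatorname{curl})\le D_1^\top(G)$, which is the main point to check, since Theorem~\ref{thm-main-1} only controls a sum of two eigenvalues.

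If the proof of Theorem~\ref{thm-main-1} establishes the $k=1$ inequality along the way, I would cite it directly. Otherwise I would argue as follows. The nonzero spectrum of $\operatorname{curl}^*\operatorname{curl}$ equals that of $\operatorname{curl}\operatorname{curl}^*$ acting on $L^2_\wedge(T(G))$. There, by the formula for $\operatorname{curl}^*$, the diagonal entry of each triangle is $3$. Each off-diagonal entry is $\pm1$, one for each pair of triangles sharing an edge. Apply the Rayleigh quotient $\langle \operatorname{curl}^*\Phi,\operatorname{curl}^*\Phi\rangle/\langle\Phi,\Phi\rangle$ to a top eigenvector $\Phi$. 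By Cauchy--Schwarz on each edge sum, $\|\operatorname{curl}^*\Phi\|^2\le \sum_{\text{edges }e}t(e)\sum_{\tau\ni e}\Phi(\tau)^2$, where $t(e)$ is the number of triangles on $e$. Hence
\[
\lambda_1\le \max_{\tau}\sum_{e\subset\tau}t(e)\le \max_{\tau}\Bigl(\sum_{e\subset \tau}(t(e)-1)+3\Bigr).
\]
The remaining task is to bound this by the number of vertices lying in at least one triangle, i.e.\ $D_1^\top(G)$. For $\tau=\{a,b,c\}$, the other triangles on the edges of $\tau$ have apex vertices outside $\tau$, and an apex $w$ can be counted at most three times. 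This is the obstacle: the crude count overshoots. To fix it I would instead use the Duval--Reiner style bound $\lambda_1\le |V(K)|$ for the $3$-family restricted to the vertices covered by triangles. Their known result on the top eigenvalue (the $k=1$ case of Conjecture~\ref{conj-1}, which holds since $\partial\partial^*$ is dominated by the Laplacian of the complete complex on those vertices, whose top eigenvalue equals the vertex count) gives $\lambda_1(\operatorname{curl}^*\operatorname{curl})\le D_1^\top(G)$. Adding the two bounds completes case (iii).
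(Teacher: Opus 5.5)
Your proposal is correct and follows the paper's proof essentially step for step: you split the top two eigenvalues via the decomposition \eqref{eq-l-a-1}, use Theorem~\ref{thm-main-1} in case (i) and Bai's theorem in case (ii), and in case (iii) combine $\lambda_1(\Delta_0)\le d_1^\top(G)$ with $\lambda_1(\operatorname{curl}^*\operatorname{curl})\le D_1^\top(G)$. The single-eigenvalue bound you flagged as the main point is already in the paper as the Duval--Reiner estimate (Lemma~\ref{thm-dr02-2}, used as \eqref{eq-dr-1} in the proof of Theorem~\ref{thm-main-1}), so the Cauchy--Schwarz detour is unnecessary; your Loewner-order comparison with the complete complex on the $m=D_1^\top(G)$ triangle-covered vertices, where $\mathcal{H}(K_m)=mI$, is a valid self-contained proof of it.
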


The above estimates are obtained for graphs, but the result for
$\operatorname{curl}^*\operatorname{curl}$ yields a corresponding bound for the up-Laplacian of a general $3$-family, which need not be realizable as the family of triangles of a graph.
Let $K$ be a $3$-family on vertex set $V(K)$. For any $i\in V(K)$, its \emph{degree} $\operatorname{deg}_K(i)$ is the number of $3$-sets in $K$ that contain $i$. Let $\mathbf{d}(K)=(d_1(K)\ge d_2(K)\ge\cdots)$ be the vertex degree sequence of $K$, and let $\mathbf{d}^{\top}(K)=(d_1^\top(K)\ge d_2^\top(K)\ge\cdots)$ denote its conjugate degree sequence.
\begin{corollary}\label{cor-simplicial}
Let $K$ be a $3$-family on vertex set $V(K)$. Then
\[
\lambda_1(\partial_{2}(K)\partial_{2}^*(K)) + \lambda_2(\partial_{2}(K)\partial_{2}^*(K)) \le d_1^\top(K) + d_2^\top(K),
\]
where $\partial_{2}(K)\partial_{2}^*(K)$ denotes the up-Laplacian of $K$.
\end{corollary}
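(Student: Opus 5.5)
The plan is to deduce the statement from Theorem~\ref{thm-main-1}. The first step is to identify the relevant operators. Let $K$ be a $3$-family and let $E(K)$ be the set of $2$-subsets of $V(K)$ contained in some member of $K$. Fix orientations, and let $B$ be the signed incidence matrix of $\partial_2(K)$; its rows are indexed by $E(K)$, its columns by $K$, and the entries are $\pm1$. Then $\partial_2\partial_2^*=BB^{\top}$. If $K=T(G)$ for a graph $G$, this matrix is exactly the matrix of $\operatorname{curl}^*\operatorname{curl}$ in the basis of oriented edges, up to zero rows for edges in no triangle. Moreover $\deg_K(v)=\deg^{(2)}_G(v)$, so $d^\top(K)=D^\top(G)$. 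Hence the statement is literally Theorem~\ref{thm-main-1} whenever $K$ is realizable as the triangle family of a graph.

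The second step handles the general case. A $3$-family need not be the clique family of its $1$-skeleton $G_K$: in general $T(G_K)\supsetneq K$, for instance when three members of $K$ pairwise share edges that close up an extra triangle. Passing to $T(G_K)$ is useless. The matrix $B^{\top}B$ for $K$ is a principal submatrix of the one for $T(G_K)$, so Cauchy interlacing gives $\lambda_i(K)\le\lambda_i(T(G_K))$. But the degrees of $T(G_K)$ are also larger, so the inequality points the wrong way. I will therefore not reduce to a graph. Instead I will check that the proof of Theorem~\ref{thm-main-1} never uses the clique property of $T(G)$. Concretely, I will reread that argument as a statement about an arbitrary $\pm1$ matrix $B$ whose columns are the boundaries of oriented $3$-sets. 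The argument should use only the following ingredients:
\begin{itemize}
\item the Rayleigh-quotient / Ky Fan characterization $\lambda_1+\lambda_2=\max\{\operatorname{tr}(P^{\top}BB^{\top}P)\}$ over orthonormal pairs;
\item the local expansion $\|B^{\top}x\|^2=\sum_{\tau\in K}(\text{signed sum of }x\text{ over the three edges of }\tau)^2$;
\item the counting of members of $K$ through a vertex or a pair of vertices, which is what produces $D_1^\top+D_2^\top$.
\end{itemize}
Each of these is stated purely in terms of the family of $3$-sets and their incidences with edges and vertices. None of them uses the fact that every $3$-clique of $G$ belongs to the family.

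The main obstacle is exactly this verification. Some step in the proof of Theorem~\ref{thm-main-1} might invoke a genuinely graph-theoretic fact, for example a case analysis on the neighbourhood of a maximum-degree vertex that uses ``if $uv,vw,uw\in E(G)$ then $uvw\in T(G)$''. If so, I would first try to replace that step by a weaker fact that holds for any $3$-family, using only the link of a vertex, which is a graph. If that fails, a fallback is a realization trick. Choose an ordering of $K$ and replace $K$ by an isomorphic-spectrum family on a larger vertex set: split a vertex $v$ into copies, one for each ``unwanted'' triangle it would create. This has to be done so that the incidence matrix $B$ is unchanged up to permutation and zero rows, while the conjugate degree sequence can only increase. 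Monotonicity of $d^\top_1+d^\top_2$ under such splittings would then need to be checked. I expect the direct re-reading of the proof to work, since majorization bounds of Grone--Merris type are typically local in the incidence structure.
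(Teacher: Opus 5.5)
The gap is in your second step, where you discard the reduction to the $1$-skeleton $G_K$ because ``the degrees of $T(G_K)$ are also larger, so the inequality points the wrong way.'' The right-hand side involves only $d_1^\top$ and $d_2^\top$, i.e.\ the numbers of vertices lying in at least one, respectively at least two, members. Neither changes when $K$ is replaced by $T(G_K)$:
\begin{itemize}
\item A vertex with $\deg_K(v)=0$ is isolated in $G_K$.
\item A vertex with $\deg_K(v)=1$ lies in a unique $\{v,a,b\}\in K$. Its only neighbours in $G_K$ are $a$ and $b$, so it lies in exactly one triangle of $G_K$.
\item A vertex with $\deg_K(v)\ge 2$ lies in at least two triangles of $G_K$, since $K\subseteq T(G_K)$.
\end{itemize}
Hence $D_1^\top(G_K)=d_1^\top(K)$ and $D_2^\top(G_K)=d_2^\top(K)$. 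Only the entries $D_j^\top$ with $j\ge 3$ can grow, and they do not occur in the inequality. With this observation, the proof closes immediately. Use the interlacing you already wrote down, together with $\mathbf{s}(\partial_2\partial_2^*)\stackrel{\circ}{=}\mathbf{s}(\partial_2^*\partial_2)$, and apply Theorem~\ref{thm-main-1} to $G_K$. This is exactly the paper's argument in its main case: the identities $V_1(G)=V_1(K)$ and $V_2(G)=V_2(K)$ behind \eqref{eq-thm-z-4}. The paper also splits off the case $d_2^\top(K)\le 2$ (where $K=T(G_K)$ exactly) and uses a deletion induction, although the reduction above works in every case.

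What you propose instead is not a proof. The verification you yourself call the main obstacle is left undone. Your list of ingredients also does not describe the proof of Theorem~\ref{thm-main-1}. Besides Ky Fan and the Duval--Reiner bound, that proof uses an explicit classification of graphs with $D_2^\top\le 2$. In Case~2 it embeds $G$ into some $G'\in\mathcal{G}_{n;k}^{a_1,\ldots,a_k}$, completing $V_2(G)$ to a clique so as to exploit $\mathcal{H}(K_n)=nI$, and then runs a configuration-by-configuration sign analysis. That embedding is itself the ``enlarge the triangle family and interlace'' move you rejected, and it is valid for the same reason as above. So carrying out your re-reading would require precisely the observation you dismissed.

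Your fallback fails as well. Splitting a vertex generally separates members that shared an edge through it, so $B$ is not preserved. Moreover, splitting never decreases $d_1^\top+d_2^\top$, so a bound for the split family is in general weaker than the one you need for $K$.
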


\section{Preliminaries}

We begin by recalling a classical result on eigenvalues of symmetric matrices that will be used repeatedly.

\begin{lemma}[Ky Fan's inequality {\cite[Proposition~A.6]{MOA2011}}]\label{thm-5}
Let $A,B\in\mathbb{R}^{n\times n}$ be symmetric matrices, and let $1\le k\le n$. Then
\[
\sum_{i=1}^k \lambda_i(A+B) \le \sum_{i=1}^k \lambda_i(A) + \sum_{i=1}^k \lambda_i(B).
\]
\end{lemma}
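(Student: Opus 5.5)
The plan is to derive the inequality from the Ky Fan maximum principle, a variational characterization of the sum of the $k$ largest eigenvalues of a symmetric matrix: for symmetric $M\in\mathbb{R}^{n\times n}$,
\[
\sum_{i=1}^k \lambda_i(M)=\max\bigl\{\operatorname{tr}(U^{\top}MU) : U\in\mathbb{R}^{n\times k},\ U^{\top}U=I_k\bigr\}.
\]
Given this, the lemma is immediate. Choose $U$ with orthonormal columns attaining the maximum for $M=A+B$; one can take the columns to be orthonormal eigenvectors of $A+B$ for its top $k$ eigenvalues. Linearity of the trace gives $\operatorname{tr}(U^{\top}(A+B)U)=\operatorname{tr}(U^{\top}AU)+\operatorname{tr}(U^{\top}BU)$. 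Each of the two terms is bounded by the corresponding maximum for $A$ and for $B$, so the maximum principle bounds them by $\sum_{i\le k}\lambda_i(A)$ and $\sum_{i\le k}\lambda_i(B)$.

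The maximum principle itself carries the proof, and this is the step I expect to require care. Write $M=\sum_{j=1}^n \lambda_j q_jq_j^{\top}$ with $\{q_j\}$ an orthonormal eigenbasis and $\lambda_1\ge\cdots\ge\lambda_n$. Taking $U=[q_1,\dots,q_k]$ gives $\operatorname{tr}(U^{\top}MU)=\sum_{j\le k}\lambda_j$, so the maximum is at least this sum.

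For the reverse inequality, let $U=[u_1,\dots,u_k]$ be any matrix with orthonormal columns and set $w_j=\sum_{i=1}^k (q_j^{\top}u_i)^2=\|U^{\top}q_j\|^2$. Then $\operatorname{tr}(U^{\top}MU)=\sum_{j=1}^n\lambda_j w_j$. The weights satisfy two constraints:
\begin{enumerate}
\item[(a)] $0\le w_j\le 1$, because $UU^{\top}$ is an orthogonal projection, so $\|U^{\top}q_j\|\le\|q_j\|=1$;
\item[(b)] $\sum_j w_j=\operatorname{tr}(U^{\top}U)=k$, using that $\{q_j\}$ is an orthonormal basis.
\end{enumerate}

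It remains to maximize the linear function $\sum_j\lambda_jw_j$ over weights in $[0,1]$ with total mass $k$. The maximum puts full weight on the $k$ largest $\lambda_j$. Explicitly,
\[
\sum_j\lambda_jw_j-\sum_{j\le k}\lambda_j=\sum_{j\le k}(w_j-1)\lambda_j+\sum_{j>k}w_j\lambda_j\le \lambda_k\Bigl(\sum_{j\le k}(w_j-1)+\sum_{j>k}w_j\Bigr)=0 .
\]
The inequality holds term by term: for $j\le k$ we have $w_j-1\le 0$ and $\lambda_j\ge\lambda_k$, and for $j>k$ we have $w_j\ge0$ and $\lambda_j\le\lambda_k$. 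The final equality is constraint (b). This proves the maximum principle, and with it the lemma.
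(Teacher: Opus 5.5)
Your proof is correct and complete. The paper gives no proof of its own; it only cites Marshall--Olkin--Arnold, where Ky Fan's inequality is derived in the same way from the maximum principle $\sum_{i\le k}\lambda_i(M)=\max_{U^{\top}U=I_k}\operatorname{tr}(U^{\top}MU)$ and linearity of the trace, and your proof of that principle via weights $w_j=\|U^{\top}q_j\|^2\in[0,1]$ with $\sum_j w_j=k$ is sound.
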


\subsection{Oriented incidence structure and matrix representations}

We now give the matrix representations of the operators $-\operatorname{grad}\operatorname{div}$, $\operatorname{curl}^{*}\operatorname{curl}$, and $\Delta_1$.
Let $G$ be a simple graph with vertex set $V(G)$, edge set $E(G)$, and triangle set $T(G)$.
We fix arbitrary orientations on all edges and on all triangles.

For an oriented edge $e$, we denote its tail and head by $e^{-}$ and $e^{+}$, respectively, and write $V(e)=\{e^{-},e^{+}\}$.
If there is an oriented edge from $u$ to $v$, we write $u\to v$.
If $u$ and $v$ are adjacent, we write $u\sim v$; otherwise $u\nsim v$.
For a vertex $v$ and an edge $e$, we write $v\in e$ if $v\in V(e)$, and we write $v\to e$ if $v=e^{-}$, $e\to v$ if $v=e^{+}$.

For two edges $e_1,e_2\in E(G)$, we write $e_1\sim e_2$ if $V(e_1)\cap V(e_2)\neq\varnothing$.
Furthermore:
\begin{itemize}
\setlength{\itemsep}{-1pt}
    \item $e_1\to e_2$ if $e_1^{+}=e_2^{-}$;
    \item $e_1\stackrel{+}{\sim}e_2$ if $e_1^{+}=e_2^{+}$, and $e_1\stackrel{-}{\sim}e_2$ if $e_1^{-}=e_2^{-}$;
    \item $e_1\stackrel{\pm}{\sim}e_2$ if either $e_1\stackrel{+}{\sim}e_2$ or $e_1\stackrel{-}{\sim}e_2$;
    \item $e_1\leftrightarrow e_2$ if either $e_1\to e_2$ or $e_2\to e_1$;
    \item $e_1\triangle e_2$ if $e_1$ and $e_2$ belong to a common triangle.
\end{itemize}
For an edge $e$ and a triangle $\triangle$, we write $e\in\triangle$ if $e$ is an edge of $\triangle$.
If the orientation of $e$ is compatible with that of $\triangle$, we write $e\in\triangle^{+}$; otherwise $e\in\triangle^{-}$.
The \emph{triangle degree} of an edge $e\in E(G)$ is defined by
$\deg_G(e) = \bigl|\{\triangle\in T(G) \mid e\in\triangle\}\bigr|$.

For clarity, we summarize the above conventions in Table~\ref{tab-a-1}.
\begin{table}[!ht]
\centering
\caption{Relations between vertices, edges, and triangles. 
(a) Vertex--vertex, vertex--edge, and common-triangle relations. 
(b) Edge--edge incidence and edge--triangle relations.}
\label{tab-a-1}
\begin{minipage}[t]{0.48\textwidth}
\centering
\begin{tabular}{|c|c|c|}
\hline
Symbol & Relation & Diagram \\
\hline
\multirow{2}{*}{$u\sim v$} & $u\rightarrow v$ & 
  \includegraphics[scale=0.45]{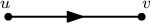} \\[1.2mm]
\cline{2-3}
& $v\rightarrow u$ & 
  \includegraphics[scale=0.45]{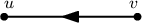} \\[1.2mm]
\hline
\multirow{2}{*}{$u\in e$} & $u\rightarrow e$ ($u=e^-$) & 
  \includegraphics[scale=0.45]{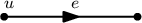} \\[1.2mm]
\cline{2-3}
& $e\rightarrow u$ ($u=e^+$) & 
  \includegraphics[scale=0.45]{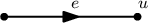} \\[1.2mm]
\hline
\multicolumn{2}{|c|}{$e_1\triangle e_2$} & 
  \raisebox{-.5\height}{\includegraphics[scale=0.45]{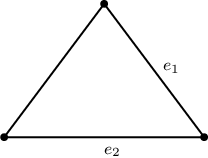}} \\[1.2mm]
\hline
\end{tabular}
\end{minipage}
\hfill
\begin{minipage}[t]{0.48\textwidth}
\centering
\begin{tabular}{|c|c|c|}
\hline
Symbol & Relation & Diagram \\
\hline
\multirow{4}{*}{$e_1\sim e_2$} & 
  \multirow{2}{*}{$e_1\overset{\pm}{\sim}e_2$} & 
  $e_1\overset{+}{\sim}e_2$\; \includegraphics[scale=0.45]{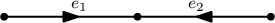} \\[1.2mm]
\cline{3-3}
& & $e_1\overset{-}{\sim}e_2$\; \includegraphics[scale=0.45]{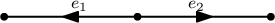} \\[1.2mm]
\cline{2-3}
& \multirow{2}{*}{$e_1\leftrightarrow e_2$} & 
  $e_1\rightarrow e_2$\; \includegraphics[scale=0.45]{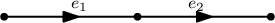} \\[1.2mm]
\cline{3-3}
& & $e_2\rightarrow e_1$\; \includegraphics[scale=0.45]{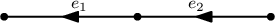} \\[1.2mm]
\hline
\multirow{2}{*}{$e\in\triangle$} & $e\in\triangle^+$ & 
  \raisebox{-.5\height}{\includegraphics[scale=0.45]{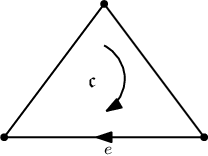}} \\[1.2mm]
\cline{2-3}
& $e\in\triangle^-$ & 
  \raisebox{-.5\height}{\includegraphics[scale=0.45]{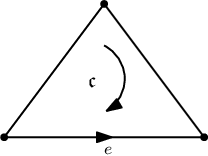}} \\[1.2mm]
\hline
\end{tabular}
\end{minipage}
\end{table}

The \emph{edge--vertex incidence matrix} of $G$, denoted by $\mathcal{B}=(\mathcal{B}_{ev})$, is the matrix with rows indexed by $E(G)$ and columns by $V(G)$.
The \emph{triangle--edge incidence matrix}, denoted by $\mathcal{C}=(\mathcal{C}_{\triangle e})$, is the matrix with rows indexed by $T(G)$ and columns by $E(G)$.
They are defined by
\[
\mathcal{B}_{ev}=
\begin{cases}
-1, & \text{if } v\to e,\\
1,  & \text{if } e\to v,\\
0,  & \text{otherwise},
\end{cases}
~\text{and}~
\mathcal{C}_{\triangle e}=
\begin{cases}
-1, & \text{if } e\in\triangle^{-},\\
1,  & \text{if } e\in\triangle^{+},\\
0,  & \text{otherwise}.
\end{cases}
\]

According to \cite{LHL2020}, the matrix representations of $-\operatorname{grad}\operatorname{div}$ and $\operatorname{curl}^{*}\operatorname{curl}$ are given by $\mathcal{B}\mathcal{B}^\top$ and $\mathcal{C}^\top\mathcal{C}$, respectively.
Their entries are described explicitly below.

\begin{lemma}[\cite{LHL2020}]\label{lem-1}
Let $G$ be a graph with fixed orientations on $E(G)$ and $T(G)$. Then the matrix representing $-\operatorname{grad}\operatorname{div}$ is $\mathcal{B}\mathcal{B}^\top$, indexed by $E(G)$, with entries
\[
(\mathcal{B}\mathcal{B}^\top)_{ee'}=
\begin{cases}
2,    & \text{if } e' = e,\\[2pt]
-1,   & \text{if } e \leftrightarrow e',\\[2pt]
1,    & \text{if } e' \overset{\pm}{\sim} e,\\[2pt]
0,    & \text{otherwise}.
\end{cases}
\]
\end{lemma}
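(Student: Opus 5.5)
The entry formula for $\mathcal{B}\mathcal{B}^\top$ follows from the matrix representation and a direct entry-by-entry computation.

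First I will confirm that $-\operatorname{grad}\operatorname{div}$ is represented by $\mathcal{B}\mathcal{B}^\top$. Identify $L^2_{\wedge}(E(G))$ with $\mathbb{R}^{E(G)}$ by sending $X$ to the vector $x_e = X(e^-,e^+)$, and identify $L^2(V(G))$ with $\mathbb{R}^{V(G)}$. The inner product on $L^2_{\wedge}(E(G))$ counts each edge once with its chosen orientation, so this identification is an isometry (after the usual normalization in \cite{LHL2020}).

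For $f\in L^2(V(G))$ we have $(\operatorname{grad} f)(e^-,e^+) = f(e^+)-f(e^-) = \sum_v \mathcal{B}_{ev} f(v)$. Hence $\operatorname{grad}$ is represented by $\mathcal{B}$.

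For $X$, we have $(\operatorname{div}X)(v)=\sum_{u\sim v}X(v,u)$. If $v=e^-$, this term equals $x_e$. If $v=e^+$, it equals $X(e^+,e^-)=-x_e$. So $\operatorname{div}X(v) = -\sum_e \mathcal{B}_{ev}x_e$, which means $\operatorname{div}$ is represented by $-\mathcal{B}^\top$. This is consistent with $\operatorname{div}=-\operatorname{grad}^*$. Therefore $-\operatorname{grad}\operatorname{div}$ is represented by $-\mathcal{B}(-\mathcal{B}^\top)=\mathcal{B}\mathcal{B}^\top$.

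Next I compute the entries
\[
(\mathcal{B}\mathcal{B}^\top)_{ee'}=\sum_{v}\mathcal{B}_{ev}\mathcal{B}_{e'v}.
\]
A term in this sum is nonzero only when $v\in V(e)\cap V(e')$.

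If $e'=e$, the row of $\mathcal{B}$ has exactly two nonzero entries, $\pm1$, so the entry is $1+1=2$.

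If $e'\neq e$, then since $G$ is simple the two edges share at most one vertex $v$, and the entry equals $\mathcal{B}_{ev}\mathcal{B}_{e'v}$. There are three subcases.
\begin{itemize}
\item If $e\overset{+}{\sim}e'$, then $v$ is the head of both edges, giving $1\cdot1=1$.
\item If $e\overset{-}{\sim}e'$, then $v$ is the tail of both edges, giving $(-1)(-1)=1$.
\item If $e\to e'$ or $e'\to e$, then $v$ is the head of one edge and the tail of the other, giving $-1$.
\end{itemize}
If the edges share no vertex, the entry is $0$.

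These cases are exhaustive: two distinct edges sharing a vertex $v$ must have $v$ playing either the same role (head/head or tail/tail) or opposite roles in them.

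The only step needing care is the sign convention for $\operatorname{div}$, i.e.\ checking that it matches $-\mathcal{B}^\top$ rather than $\mathcal{B}^\top$. I will settle this by the explicit evaluation above.
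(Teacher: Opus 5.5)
Your proof is correct: with $X\mapsto (X(e^-,e^+))_{e\in E(G)}$ you rightly get $\operatorname{grad}\mapsto\mathcal{B}$ and $\operatorname{div}\mapsto-\mathcal{B}^\top$. The case analysis of $\sum_v\mathcal{B}_{ev}\mathcal{B}_{e'v}$, using that distinct edges of a simple graph share at most one vertex, then gives exactly the stated entries.

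The paper gives no proof of its own and simply cites \cite{LHL2020}, so your direct computation is the standard verification. Your remark that the identification is an isometry is not needed, because you computed both operators directly rather than through adjoints.
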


\begin{lemma}[\cite{LHL2020}]\label{lem-2}
Under the same assumptions, the matrix representing $\operatorname{curl}^{*}\operatorname{curl}$ is $\mathcal{C}^\top\mathcal{C}$, indexed by $E(G)$, with entries
\[
(\mathcal{C}^\top\mathcal{C})_{ee'}=
\begin{cases}
\deg_G(e), & \text{if } e' = e,\\[2pt]
1,         & \text{if } e \leftrightarrow e' \text{ and } e \triangle e',\\[2pt]
-1,        & \text{if } e' \overset{\pm}{\sim} e \text{ and } e \triangle e',\\[2pt]
0,         & \text{otherwise}.
\end{cases}
\]
\end{lemma}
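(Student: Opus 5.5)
The plan is to compute $(\mathcal{C}^\top\mathcal{C})_{ee'}=\sum_{\triangle\in T(G)}\mathcal{C}_{\triangle e}\mathcal{C}_{\triangle e'}$ directly from the definition of $\mathcal{C}$. This expresses each entry as a sum over triangles containing both $e$ and $e'$. Before that, I first confirm that $\mathcal{C}^\top\mathcal{C}$ represents $\operatorname{curl}^*\operatorname{curl}$ in the basis of oriented edges. Identify $X\in L^2_\wedge(E(G))$ with the vector $(X(e^-,e^+))_{e\in E(G)}$, and $\Phi$ with $(\Phi(\triangle))_{\triangle}$, evaluated at the chosen cyclic orientation of each triangle. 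Then $(\operatorname{curl}X)(\triangle)=\sum_{e\in\triangle}\pm X(e)$, with sign $+$ exactly when $e$ is traversed in the triangle's orientation, that is, when $e\in\triangle^+$. Hence $\operatorname{curl}$ has matrix $\mathcal{C}$. Since the inner products in the paper become the standard dot products in these bases (up to the common factor from counting each unordered edge or triangle), $\operatorname{curl}^*$ has matrix $\mathcal{C}^\top$, and $\operatorname{curl}^*\operatorname{curl}$ is represented by $\mathcal{C}^\top\mathcal{C}$.

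For the entries, I split into cases. On the diagonal, $\mathcal{C}_{\triangle e}^2=1$ for every triangle containing $e$, so $(\mathcal{C}^\top\mathcal{C})_{ee}=\deg_G(e)$. For $e\neq e'$, a triangle contains both edges only if they share a vertex and their other endpoints are adjacent. In that case the three vertices span exactly one triangle. So the entry is $0$ unless $e\triangle e'$, and if $e\triangle e'$ it equals the single product $\mathcal{C}_{\triangle e}\mathcal{C}_{\triangle e'}$.

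The only point needing care is the sign of this product. Write $\triangle$ with its cyclic orientation $(a,b,c)$, and let $e,e'$ share the vertex $b$. An edge of $\triangle$ is in $\triangle^+$ iff its orientation agrees with the cycle direction. In the cycle, one of the two edges at $b$ enters $b$ and the other leaves $b$.
\begin{itemize}
\item If $e\to e'$ or $e'\to e$ (head of one equals tail of the other at $b$), then both edges agree with the cycle direction or both disagree. The product is therefore $+1$.
\item If $e\overset{\pm}{\sim}e'$ (both heads or both tails at $b$), then exactly one of them agrees with the cycle. The product is therefore $-1$.
\end{itemize}
This verification is the one step to check carefully. It is independent of the chosen orientations, since reversing $\triangle$ flips both factors, and reversing an edge flips both its $\mathcal{C}$-entries and its relation type. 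These cases give exactly the stated formula.
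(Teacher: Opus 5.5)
Your proposal is correct. The paper gives no proof of this lemma; it is quoted from \cite{LHL2020}. Your direct expansion $(\mathcal{C}^\top\mathcal{C})_{ee'}=\sum_{\triangle}\mathcal{C}_{\triangle e}\mathcal{C}_{\triangle e'}$ is exactly the routine verification that the citation stands in for. It rests on two facts you use correctly: two distinct edges lie in at most one common triangle, and the sign is fixed by the relation at the shared vertex. Your case analysis of $\leftrightarrow$ (product $+1$) versus $\overset{\pm}{\sim}$ (product $-1$) is right.

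The one spot to tighten is the identification $\operatorname{curl}^*=\mathcal{C}^\top$. The phrase ``up to the common factor'' is valid only if each edge and each triangle is counted once in the inner products. If the sums were over ordered pairs and triples, the factors would be $2$ and $6$, which are not common, and the adjoint would become $3\mathcal{C}^\top$. It is cleaner to read $\mathcal{C}^\top$ directly off the paper's explicit formula $(\operatorname{curl}^*\Phi)(v_i,v_j)=\sum_{v_k}\Phi(v_i,v_j,v_k)$. This works because $\Phi(e^-,e^+,v_k)=\mathcal{C}_{\triangle e}\,\Phi(\triangle)$ for $\triangle=\{e^-,e^+,v_k\}$ evaluated at its chosen orientation.
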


\begin{lemma}\label{lem-3}
The spectrum of $\operatorname{curl}^{*}\operatorname{curl}$ is independent of the choice of orientations on $E(G)$ and $T(G)$.
\end{lemma}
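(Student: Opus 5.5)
Lemma~\ref{lem-3} asserts that the spectrum of $\mathcal{C}^\top\mathcal{C}$ does not depend on the orientations. The plan is to show that every change of orientation replaces $\mathcal{C}$ by $P\mathcal{C}Q$, where $P$ and $Q$ are diagonal matrices with entries $\pm1$. Then $\mathcal{C}^\top\mathcal{C}$ is replaced by an orthogonally similar matrix, so its spectrum is unchanged.

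First, fix two orientation choices, $(\mathcal{O}_E,\mathcal{O}_T)$ and $(\mathcal{O}'_E,\mathcal{O}'_T)$. Every change can be decomposed into flips of single edges and flips of single triangles, so it suffices to treat these two elementary moves.

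\textbf{Triangle flip.} Reversing the orientation of a triangle $\triangle$ exchanges the relations $e\in\triangle^{+}$ and $e\in\triangle^{-}$ for every edge $e$ of $\triangle$. Hence the row of $\mathcal{C}$ indexed by $\triangle$ is multiplied by $-1$, and the new matrix is $P\mathcal{C}$ with $P=\operatorname{diag}(\pm1)$ indexed by $T(G)$. Since $P^\top P=I$, we get $(P\mathcal{C})^\top(P\mathcal{C})=\mathcal{C}^\top\mathcal{C}$. The matrix is literally unchanged.

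\textbf{Edge flip.} Reversing an edge $e$ changes, for every triangle $\triangle\ni e$, whether the orientation of $e$ is compatible with that of $\triangle$. Hence the column of $\mathcal{C}$ indexed by $e$ is negated, and the new matrix is $\mathcal{C}Q$ with $Q$ diagonal and $\pm1$ on the diagonal, indexed by $E(G)$. Then $(\mathcal{C}Q)^\top(\mathcal{C}Q)=Q^\top(\mathcal{C}^\top\mathcal{C})Q=Q^{-1}(\mathcal{C}^\top\mathcal{C})Q$, which is similar to $\mathcal{C}^\top\mathcal{C}$ and so has the same eigenvalues with multiplicities.

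The same conclusion can be read off from the entry description in Lemma~\ref{lem-2}. Flipping $e$ turns the relation $e\leftrightarrow e'$ into $e\overset{\pm}{\sim}e'$ and vice versa, while leaving $\deg_G(e)$ and the relation $e\triangle e'$ unchanged. So the off-diagonal entries in row and column $e$ change sign, which is exactly conjugation by $Q$.

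There is no real obstacle. The only point needing care is that reversing an edge flips its compatibility with every triangle containing it simultaneously, so the effect really is multiplication of a whole column by $-1$. This follows directly from the definition of $\triangle^{\pm}$ as compatibility of orientations.
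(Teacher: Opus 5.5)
Your proof is correct and is essentially the paper's argument: both show $\mathcal{C}'^\top\mathcal{C}'=Q(\mathcal{C}^\top\mathcal{C})Q$ for a diagonal $\pm1$ matrix $Q$ that records which edges were reversed. The paper reads this off the entry formula of Lemma~\ref{lem-2}, which is your second remark. Your factorization $\mathcal{C}'=P\mathcal{C}Q$ instead obtains it directly from the incidence matrix, and it makes explicit that changing triangle orientations leaves $\mathcal{C}^\top\mathcal{C}$ literally unchanged.
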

\begin{proof}
Let $\gamma_1$ and $\gamma_2$ be two distinct choices of orientations on $E(G)$ and $T(G)$, and let $\mathcal{C}$ and $\mathcal{C}^{\prime}$ denote the corresponding triangle–edge incidence matrices associated with these orientations, respectively. Let $E$ be a diagonal sign matrix indexed by $E(G)$, defined by
\[
E_{ee} =
\begin{cases}
	1, & \text{if the orientation of } e \text{ is the same under } \gamma_1 \text{ and } \gamma_2,\\
	-1, & \text{otherwise}.
\end{cases}
\]
As shown in Lemma~\ref{lem-2}, the entrywise structure of the incidence relation between edges and triangles is invariant under simultaneous preservation or reversal of the orientations of $e$ and $e'$. In all other cases, the only effect of changing orientations is an interchange of signs in the cases $e \leftrightarrow e'$ and $e' \overset{\pm}{\sim} e$, while all other relations remain unchanged.
Consequently, by Lemma~\ref{lem-2}, we obtain
\[
\left(\mathcal{C}^{\prime\top}\mathcal{C}^\prime\right)_{ee'}  = E_{e,e}\left(\mathcal{C}^{\top}\mathcal{C}\right)_{ee'}E_{e^{\prime}e^{\prime}},
\]
and hence $\mathcal{C}^{\prime\top}\mathcal{C}^\prime= E^{\top}\left(\mathcal{C}^{\top}\mathcal{C} \right)E.$
Since $E$ is a diagonal matrix with entries $\pm 1$, we have $E^{-1}=E^{\top}=E$. Therefore,
\[
\mathcal{C}'^{\top}\mathcal{C}' = E^{-1}(\mathcal{C}^{\top}\mathcal{C})E,
\]
so $\mathcal{C}'^{\top}\mathcal{C}'$ and $\mathcal{C}^{\top}\mathcal{C}$ are similar matrices. Consequently, they have the same spectrum, which completes the proof.
\end{proof}

Since $\Delta_1 = -\operatorname{grad}\operatorname{div} + \operatorname{curl}^{*}\operatorname{curl}$, its matrix representation follows immediately.

\begin{lemma}\label{thm-133}
Let $G$ be a graph with orientations on $E(G)$ and $T(G)$. Then the graph Helmholtzian is represented by the matrix $\mathcal{H}(G)=(h_{ee'})=\mathcal{B}\mathcal{B}^{\top}+\mathcal{C}^{\top}\mathcal{C}$, indexed by $E(G)$, where
\[
h_{ee'}=
\begin{cases}
\deg_G(e)+2, & \text{if } e' = e,\\[2pt]
-1,          & \text{if } e \leftrightarrow e' \text{ and } e \ntriangle e',\\[2pt]
1,           & \text{if } e' \overset{\pm}{\sim} e \text{ and } e \ntriangle e',\\[2pt]
0,           & \text{otherwise}.
\end{cases}
\]
\end{lemma}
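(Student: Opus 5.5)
The statement to prove is Lemma~\ref{thm-133}. The plan is to add the two entrywise descriptions in Lemma~\ref{lem-1} and Lemma~\ref{lem-2}, case by case on the relative position of $e$ and $e'$. The matrix identity $\mathcal{H}(G)=\mathcal{B}\mathcal{B}^\top+\mathcal{C}^\top\mathcal{C}$ needs no argument beyond linearity. By \cite{LHL2020}, $-\operatorname{grad}\operatorname{div}$ and $\operatorname{curl}^*\operatorname{curl}$ are represented, in the same edge basis with the same orientations, by $\mathcal{B}\mathcal{B}^\top$ and $\mathcal{C}^\top\mathcal{C}$. Hence their sum $\Delta_1$ is represented by the sum of the two matrices.

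\textbf{Diagonal.} For $e'=e$, Lemma~\ref{lem-1} gives $2$ and Lemma~\ref{lem-2} gives $\deg_G(e)$, so $h_{ee}=\deg_G(e)+2$.

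\textbf{Edges sharing a vertex.} For $e\neq e'$ with $e\sim e'$, the two edges share exactly one vertex. Their orientations then fall into exactly one of two cases, either $e\leftrightarrow e'$ or $e'\overset{\pm}{\sim}e$. Each case splits further according to whether $e\triangle e'$.
\begin{itemize}
\item Case $e\leftrightarrow e'$ with $e\triangle e'$: the entries are $-1$ from $\mathcal{B}\mathcal{B}^\top$ and $+1$ from $\mathcal{C}^\top\mathcal{C}$, so $h_{ee'}=0$.
\item Case $e\leftrightarrow e'$ with $e\ntriangle e'$: only $\mathcal{B}\mathcal{B}^\top$ contributes, giving $-1$.
\item Case $e'\overset{\pm}{\sim}e$ with $e\triangle e'$: the entries $+1$ and $-1$ cancel to $0$.
\item Case $e'\overset{\pm}{\sim}e$ with $e\ntriangle e'$: the entry is $+1$.
\end{itemize}

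\textbf{Disjoint edges.} If $e\neq e'$ are vertex-disjoint, both matrices have a zero entry. Indeed, two edges lying in a common triangle must share a vertex, so $e\triangle e'$ cannot hold here.

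Collecting these cases gives exactly the stated formula, with all cancelled cases absorbed into ``otherwise''. The only point needing care, and the main (mild) obstacle, is checking that the relations are mutually exclusive and exhaustive. In particular, for $e\sim e'$ with $e\neq e'$, exactly one of $e\leftrightarrow e'$ and $e'\overset{\pm}{\sim}e$ holds. Also, $e\triangle e'$ makes sense only for adjacent edges. Given these, the case analysis is complete.
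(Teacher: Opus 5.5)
Your proposal is correct and follows the same route as the paper: the paper simply states that the formula follows immediately by adding the entrywise descriptions of $\mathcal{B}\mathcal{B}^\top$ and $\mathcal{C}^\top\mathcal{C}$ from Lemmas~\ref{lem-1} and~\ref{lem-2}. Your case analysis spells out what the paper leaves implicit, namely the cancellations to $0$ when $e\triangle e'$ and the fact that for distinct adjacent edges exactly one of $e\leftrightarrow e'$ and $e'\overset{\pm}{\sim}e$ holds.
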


\subsection{Known bounds for the largest eigenvalue}

We also record three known estimates for the largest eigenvalue of $\operatorname{curl}^{*}\operatorname{curl}$ that follow from earlier work on simplicial complexes.

\begin{lemma}[{\cite{DR2002}}]\label{thm-dr02-2}
Let $G$ be a graph with vertex set $V(G)$, edge set $E(G)$, and triangle set $T(G)$. Then
\[
\lambda_1(\operatorname{curl}^{*}(G)\operatorname{curl}(G)) \le D_1^\top(G).
\]
\end{lemma}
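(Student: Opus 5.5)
We plan a direct interlacing argument: embed the triangle structure of $G$ into that of a complete graph on the vertices that lie in triangles, and compute the spectrum there. Let $W=\{v\in V(G):\deg_G^{(2)}(v)\ge 1\}$, so $|W|=D_1^\top(G)=:m$. If $m=0$ there are no triangles, $\mathcal{C}$ is empty and the claim is trivial, so assume $m\ge 3$. Since the nonzero spectra of $\mathcal{C}^\top\mathcal{C}$ and $\mathcal{C}\mathcal{C}^\top$ coincide, it suffices to bound $\lambda_1(\mathcal{C}\mathcal{C}^\top)$. This matrix is indexed by $T(G)$.

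\emph{Step 1 (reduction to $K_m$).} Let $K$ be the complete graph on $W$. Fix orientations on $E(K)$ and $T(K)$ extending the chosen orientations on the edges and triangles of $G$ that lie inside $W$. Every triangle of $G$ lies inside $W$, so $T(G)\subseteq T(K)$. The entry $(\mathcal{C}\mathcal{C}^\top)_{\triangle\triangle'}$ depends only on the two triangles and their orientations: it is $3$ on the diagonal, $\pm1$ if they share an edge, and $0$ otherwise. Hence $\mathcal{C}(G)\mathcal{C}(G)^\top$ is a principal submatrix of $\mathcal{C}(K)\mathcal{C}(K)^\top$. By Cauchy interlacing,
\[
\lambda_1(\mathcal{C}(G)^\top\mathcal{C}(G))=\lambda_1(\mathcal{C}(G)\mathcal{C}(G)^\top)\le\lambda_1(\mathcal{C}(K)\mathcal{C}(K)^\top)=\lambda_1(\mathcal{C}(K)^\top\mathcal{C}(K)).
\]
By Lemma~\ref{lem-3}, the choice of extended orientation is irrelevant.

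\emph{Step 2 (the complete graph).} We claim $\mathcal{H}(K)=\mathcal{B}\mathcal{B}^\top+\mathcal{C}^\top\mathcal{C}=mI$ on $E(K)$, and we read this off Lemma~\ref{thm-133}. Each edge of $K_m$ lies in $m-2$ triangles, so every diagonal entry is $(m-2)+2=m$. Any two distinct edges sharing a vertex span a triangle of $K_m$, so $e\triangle e'$ always holds. Thus the $\pm1$ cases in Lemma~\ref{thm-133}, which require $e\ntriangle e'$, never occur, and all off-diagonal entries vanish. Since $\mathcal{B}\mathcal{B}^\top$ is positive semidefinite, $\mathcal{C}(K)^\top\mathcal{C}(K)=mI-\mathcal{B}\mathcal{B}^\top\preceq mI$. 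Hence $\lambda_1(\mathcal{C}(K)^\top\mathcal{C}(K))\le m$.

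Combining the two steps gives $\lambda_1(\operatorname{curl}^*\operatorname{curl})\le m=D_1^\top(G)$. The only delicate point is Step 1: the triangle-indexed Gram matrix $\mathcal{C}\mathcal{C}^\top$ is a genuine principal submatrix, whereas $\mathcal{C}^\top\mathcal{C}$ itself is not, since edge degrees change when passing to $K$. This is why we pass through $\mathcal{C}\mathcal{C}^\top$ and need consistent extended orientations; both are routine once noted.
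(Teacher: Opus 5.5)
Your proof is correct. The paper gives no argument of its own for this lemma and simply cites Duval--Reiner. Yours is the standard comparison with the complete family on the support, and it rests on the same two facts the paper itself uses in the proof of Theorem~\ref{thm-main-1}: the identity $\mathcal{H}(K_n)=nI$, and interlacing applied to $\mathcal{C}(G)\mathcal{C}^\top(G)$ as a principal submatrix of the triangle-indexed matrix of a supergraph.
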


\begin{lemma}[{\cite{DR2002}}]\label{thm-dr02-1}
Let $G$ be a graph with vertex set $V(G)$, edge set $E(G)$, and triangle set $T(G)$. Then
\[
\lambda_1(\operatorname{curl}^{*}(G)\operatorname{curl}(G)) \ge \max\{\deg_G(e) \mid e\in E(G)\} + 2.
\]
\end{lemma}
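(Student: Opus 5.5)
The plan is a single Rayleigh-quotient estimate with an explicit test vector built around an edge of maximum triangle degree. First I would move to the triangle side. Since $\operatorname{curl}^*\operatorname{curl}$ is represented by $\mathcal{C}^\top\mathcal{C}$ (Lemma~\ref{lem-2}), and $\mathcal{C}^\top\mathcal{C}$ and $\mathcal{C}\mathcal{C}^\top$ have the same nonzero eigenvalues, it suffices to show
\[
\lambda_1(\mathcal{C}^\top\mathcal{C}) \ge \frac{\|\mathcal{C}^\top y\|^2}{\|y\|^2}
\]
for one well-chosen nonzero $y\in\mathbb{R}^{T(G)}$. By Lemma~\ref{lem-3} I may fix any orientations.

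A caveat comes first. The inequality implicitly assumes $T(G)\neq\varnothing$. If $G$ has no triangles, then $\operatorname{curl}^*\operatorname{curl}=0$ and the right-hand side would be $2$, so the bound fails. I would therefore state and prove it under the assumption that $d:=\max_e\deg_G(e)\ge 1$.

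Next, fix an edge $e=\{u,v\}$ with $\deg_G(e)=d$, and let $\triangle_1,\dots,\triangle_d$ be the triangles containing it, with third vertices $w_1,\dots,w_d$ (all distinct). Define $y_{\triangle_i}=\mathcal{C}_{\triangle_i e}\in\{\pm1\}$, and set $y$ to be $0$ on all other triangles. Then $\|y\|^2=d$.

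Now compute $z=\mathcal{C}^\top y$ edge by edge.
\begin{itemize}
\item At $e$ itself, $z_e=\sum_i \mathcal{C}_{\triangle_i e}^2=d$.
\item At the edges $\{u,w_i\}$ and $\{v,w_i\}$ there is exactly one contributing triangle, namely $\triangle_i$, because the $w_i$ are distinct. Hence $z=\pm1$ at each of these $2d$ edges.
\item All other entries of $z$ vanish.
\end{itemize}
Thus $\|\mathcal{C}^\top y\|^2=d^2+2d$, and the Rayleigh quotient equals $d+2$, which gives the claim.

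The only delicate point is the observation that the $2d$ side edges are pairwise distinct and each lies in only one of the chosen triangles. This is what prevents cancellation, and it follows immediately from the distinctness of the third vertices $w_i$. Otherwise the argument is routine, and it is clearly tight for a single triangle, where $d=1$ and $\lambda_1=3$.
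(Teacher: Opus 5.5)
Your proof is correct, and it supplies an argument the paper itself does not give: the paper states this lemma without proof and simply cites Duval and Reiner. Your test-vector computation is equivalent to an interlacing argument. Two triangles $\triangle_i=\{u,v,w_i\}$ and $\triangle_j=\{u,v,w_j\}$ share only the edge $e$. Hence the principal submatrix of $\mathcal{C}\mathcal{C}^\top$ on the $d$ triangles through $e$ has diagonal entries $3$ and off-diagonal entries $\mathcal{C}_{\triangle_i e}\mathcal{C}_{\triangle_j e}$. Conjugating by $\operatorname{diag}(\mathcal{C}_{\triangle_i e})$ turns it into $2I_d+J_d$, whose largest eigenvalue is $d+2$ with the all-ones eigenvector. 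Your $y$ is exactly that eigenvector written in the original signs, and $y^\top\mathcal{C}\mathcal{C}^\top y=3d+d(d-1)=d^2+2d$ matches your edge-by-edge count $\|\mathcal{C}^\top y\|^2$. The paper computes this same $2I+J$ block for the family $T_1$ in Case~2 and for $G_2$ in Appendix~A, so your proof uses only tools already present in the paper. Your appeal to Lemma~\ref{lem-3} is unnecessary, since the computation works verbatim for any orientation.

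Your caveat is right. As literally stated (``for any graph $G$''), the lemma fails when $T(G)=\varnothing$, because the left-hand side is then $0$, so the hypothesis $\max_e\deg_G(e)\ge 1$ is genuinely needed. This does not affect the paper: the lower bound is never invoked in the proofs of the main results, and in Section~\ref{se-3} the standing assumption that every edge lies in a triangle excludes this case anyway.
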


\begin{lemma}[\cite{FWW2024}]\label{thm-11}
Let $G$ be a graph with vertex set $V(G)$, edge set $E(G)$, and triangle set $T(G)$. Then
\[
\lambda_1(\operatorname{curl}^{*}(G)\operatorname{curl}(G)) \le \max\Bigl\{\,\sum_{e\in\triangle}\deg_G(e) \mid \triangle\in T(G)\Bigr\}.
\]
\end{lemma}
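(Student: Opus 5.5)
The plan is to pass from the edge-indexed matrix $\mathcal{C}^\top\mathcal{C}$ to the triangle-indexed matrix $\mathcal{C}\mathcal{C}^\top$, and then bound its largest eigenvalue by its maximum absolute row sum.

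First, $\mathcal{C}^\top\mathcal{C}$ and $\mathcal{C}\mathcal{C}^\top$ have the same nonzero eigenvalues, by the standard fact $\mathbf{s}(AB)\stackrel{\circ}{=}\mathbf{s}(BA)$. Both are positive semidefinite. Hence $\lambda_1(\operatorname{curl}^*\operatorname{curl})=\lambda_1(\mathcal{C}\mathcal{C}^\top)$, where we set both sides to $0$ if $T(G)=\varnothing$, in which case the claim is trivial. By Lemma~\ref{lem-3} we may fix any orientations.

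Next, we compute the entries of $\mathcal{C}\mathcal{C}^\top$, indexed by $T(G)$.
\begin{itemize}
\item The diagonal entry at $\triangle$ is $\sum_{e}\mathcal{C}_{\triangle e}^2=3$, since each triangle has exactly three edges.
\item For distinct triangles $\triangle\neq\triangle'$, the entry is $\sum_e \mathcal{C}_{\triangle e}\mathcal{C}_{\triangle' e}$. Two distinct triangles share at most one edge, because sharing two edges forces them to have the same three vertices. So this entry is $\pm1$ if $\triangle$ and $\triangle'$ share an edge and $0$ otherwise.
\end{itemize}

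Now we count the nonzero off-diagonal entries in row $\triangle$. Each edge $e\in\triangle$ lies in $\deg_G(e)-1$ triangles other than $\triangle$. These triangles are pairwise distinct as $e$ ranges over the edges of $\triangle$, again because a triangle $\triangle'\neq\triangle$ cannot contain two edges of $\triangle$. Therefore the absolute row sum of row $\triangle$ is
\[
3+\sum_{e\in\triangle}\bigl(\deg_G(e)-1\bigr)=\sum_{e\in\triangle}\deg_G(e).
\]

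Finally, for any real symmetric matrix $M$, the spectral radius satisfies $\rho(M)\le\|M\|_\infty=\max_i\sum_j|M_{ij}|$; this follows from Gershgorin's theorem or from the submultiplicativity of the induced norm. Applying this to $M=\mathcal{C}\mathcal{C}^\top$ gives
\[
\lambda_1(\operatorname{curl}^*\operatorname{curl})\le\max_{\triangle\in T(G)}\sum_{e\in\triangle}\deg_G(e).
\]

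There is no serious obstacle in this argument. The only points needing care are the observation that distinct triangles share at most one edge, which is what makes the off-diagonal entries $\pm1$ and the neighbour count exact, and the transfer of the top eigenvalue between $\mathcal{C}^\top\mathcal{C}$ and $\mathcal{C}\mathcal{C}^\top$.
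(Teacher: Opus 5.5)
Your argument is correct and complete. The paper itself gives no proof of this lemma: it imports the bound from \cite{FWW2024}, lists it among the known estimates, and never invokes it in the proof of Theorem~\ref{thm-main-1}. So there is no in-paper route to compare against, and what you supply is a self-contained proof.

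Your proof is the natural one. It is the triangle analogue of the Anderson--Morley argument for $\lambda_1(\Delta_0)\le\max_{uv\in E(G)}(\deg_G(u)+\deg_G(v))$: pass to the Gram matrix $\mathcal{C}\mathcal{C}^\top$ on the other side and bound its spectral radius by its maximum absolute row sum. The combinatorial fact you isolate, that two distinct triangles share at most one edge, does the real work. It makes the off-diagonal entries $0$ or $\pm1$, and it makes the count $3+\sum_{e\in\triangle}(\deg_G(e)-1)$ exact.

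Two small remarks. First, the appeal to Lemma~\ref{lem-3} is unnecessary, since absolute row sums do not depend on the orientation. Second, $\rho(M)\le\|M\|_\infty$ holds for every square matrix; what identifies $\rho$ with $\lambda_1$ here is that $\mathcal{C}\mathcal{C}^\top$ is positive semidefinite, not mere symmetry.

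Your route also uses nothing about $G$ beyond the fact that two distinct $3$-sets meet in at most one $2$-set. It therefore applies verbatim to the up-Laplacian $\partial_2\partial_2^*$ of an arbitrary $3$-family, which is the setting of Corollary~\ref{cor-simplicial}.
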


\section{Proofs of the main results}\label{se-3}
Before proceeding with the proof, we fix some notation and standing assumptions that will be used throughout this section. For two vertex disjoint graphs $G_1$ and $G_2$, denote by $G_1\cup G_2$ the \emph{union} of them. That is, $V(G_1\cup G_2)=V(G_1)\cup V(G_2)$ and $E(G_1\cup G_2)=E(G_1)\cup E(G_2)$. Their \emph{join} $G_1\vee G_2$ is the graph obtained from $G_1\cup G_2$ by adding all possible edges between $G_1$ and $G_2$. The union of $t$ copies of a graph $H$ is denoted by $tH$. As usual, we use $K_n$, $P_n$, and $C_n$ to denote the complete graph, the path, and the cycle on $n$ vertices, respectively.

 Let $G$ be a graph with vertex set $V(G)$, edge set $E(G)$, and triangle set $T(G)$. Recall that, for a vertex $v\in V(G)$, its second-order degree is defined as $\deg_G^{2}(v)=|\{\triangle\mid v\in \triangle\}|$. Let $V_1(G) = \{v \in V(G) \mid \deg_G^{(2)}(v) = 1\}$ and $V_2(G) = \{v \in V(G)\mid \deg_G^{(2)}(v) \ge 2\}$. For a positive integer $i$, denote by $D_i^{\top}(G)=\{v\in V(G)\mid \deg_G^{(2)}(v)\ge i\}$. Also, for an edge $e\in E(G)$, denote by $\deg_G(e)$ the number of triangles containing $e$. For positive integers $n,k,a_1,\ldots,a_k$, denote by $\mathcal{G}_{n;k}^{a_1,\ldots,a_k}$ the set of graphs obtained from $K_n$ by choosing $k$ distinct edges, and then, for each $1\le i\le k$, attaching $a_i$ pairwise nonadjacent new vertices, each of which is joined to both endpoints of the $i$-th chosen edge.

\subsection{Proof of Theorem \ref{thm-main-1}}
Note that if $\deg_G^{(2)}(v)=0$ for some vertex $v\in V(G)$, then every edge incident with $v$ does not belong to any triangle. Consequently, the corresponding rows and columns in the matrix representation $\mathcal{C}^\top(G)\mathcal{C}(G)$ are zero, and hence such edges contribute only to the multiplicity of the eigenvalue $0$. 
\textbf{Therefore, without loss of generality, we restrict our attention to graphs in which every edge (as well as every vertex) is contained in at least one triangle.} Therefore, for a graph $G$, we may assume that
$D_1^\top(G)=|V(G)|$, $V(G)=V_1(G)\sqcup V_2(G)$, and $D_2^\top(G) = |V_2(G)|$. In what follows, we show that Eq.~\eqref{eq-m-1} always holds.

According to Lemma \ref{thm-dr02-2}, we have \begin{equation}\label{eq-dr-1}
    \lambda_1\left(\operatorname{curl}^*(G)\operatorname{curl}(G)\right)\le D_1^{\top}(G)
\end{equation}
If $\lambda_2(\operatorname{curl}^*(G)\operatorname{curl}(G))=0$, then we obtain
\[\lambda_1\left(\operatorname{curl}^*(G)\operatorname{curl}(G)\right)
+\lambda_2\left(\operatorname{curl}^*(G)\operatorname{curl}(G)\right)
\le D_1^{\top}(G),\]
and \eqref{eq-m-1} follows immediately. Therefore, without loss of generality, we may assume that $\operatorname{curl}^*(G)\operatorname{curl}(G)$ has at least two nonzero eigenvalues. This in turn leads to $t(G)\ge 2$.

\begin{figure}[htbp]
	\centering
	\includegraphics[width=16cm]{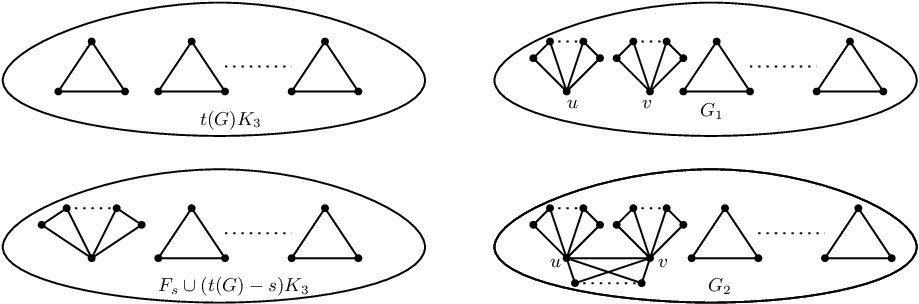}
	\caption{\small The possibilities of $G$ when $D_2^{\top}(G)\le 2$.}
	\label{f-thm-1}
\end{figure}
First, we consider the case for $D_2^{\top}(G)\le 2$. If $D_2^{\top}(G)=0$, then $G\cong t(G)K_3$. If $D_2^{\top}(G)=1$, then $G\cong F_s\cup (t(G)-s)K_3$ for some $s\ge 2$, where $F_s=K_1\vee (sK_2)$ is the friendship graph. If $D_2^{\top}(G)=2$, then assume $V_2(G)=\{u,v\}$. Therefore, we conclude that either $G\cong G_1$ when $u\not\sim v$, or $G\cong G_2$ when $u\sim v$ (shown in Fig. \ref{f-thm-1}). One can verify such cases by directed calculations. We omit the calculations here but put them in Appendix \ref{app-1}.

Next, we consider the case for $D_2^{\top}(G)\ge 3$. We divide the following two cases to discuss.

\vspace{10pt}
\noindent\textbf{Case 1.} There exists a triangle $\triangle \in T(G)$ such that $\left|\triangle\cap V_1(G)\right|\ge 2$.
\vspace{10pt}

We proceed by induction on $t(G)$. Since $D_2^{\top}(G)\ge 3$, we have $t(G)\ge 3$. If $t(G)=3$, then $G\cong G_4$ (see Figure \ref{f-thm-4}). 
\begin{figure}[htbp]
	\centering
	\includegraphics[width=7cm]{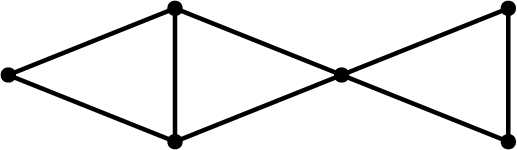}
	\vspace{3mm}
	\caption{The graph $G_4$ in Case 4.}
	\label{f-thm-4}
\end{figure}
Then $\mathbf{s}\left(\operatorname{curl}^*(G)\operatorname{curl}(G)\right)\stackrel{\circ}{=}\left\{4,3,2\right\}$, and thus we have $\lambda_2\left(\operatorname{curl}^*(G)\operatorname{curl}(G)\right)=3\le D_2^{\top}(G)$, and \eqref{eq-m-1} follows.
Suppose now that the induction hypothesis holds for all graphs with fewer than $t(G)$ triangles. Choose a triangle $\triangle\in T(G)$ such that
 $\left|\triangle\cap V_1(G)\right|\ge 2$. Let $e$ be an edge of $\triangle$ with both endpoints in $V_1(G)$, and let $G_0=G-e$. Then $\triangle$ shares no edge with any other triangle. Therefore, one can easily verify that
 \[\mathbf{s}(\operatorname{curl}^*(G)~\operatorname{curl}(G))=\mathbf{s}(\operatorname{curl}^*(G_0)~\operatorname{curl}(G_0))\uplus \{3\}.\]
If $\lambda_2(\operatorname{curl}^*(G)~\operatorname{curl}(G))\le 3$, then \eqref{eq-m-1} holds immediately due to \eqref{eq-dr-1} and the assumption $D_2^{\top}(G)\ge 3$. If $\lambda_2(\operatorname{curl}^*(G)~\operatorname{curl}(G))\ge 4$, then
\begin{equation}\label{eq-thm-2}
	\lambda_i\!\left(\operatorname{curl}^*(G_0)\operatorname{curl}(G_0)\right)=\lambda_i\!\left(\operatorname{curl}^*(G)\operatorname{curl}(G)\right)\quad\text{for} \quad i\in\{1,2\}.
\end{equation}
Moreover, if $|\triangle\cap V_1(G)|=3$, then $D_1^{\top}(G_0)=D_1^{\top}(G)-3$ and $D_2^{\top}(G_0)=D_2^{\top}(G)$; if $|\triangle\cap V_1(G)|=2$, then either $D_1^\top\left(G_0\right)=D_1^\top\left(G\right)-2$ and $D_2^\top\left(G_0\right)=D_2^\top\left(G\right)$,
or $D_1^\top\left(G_0\right)=D_1^\top\left(G\right)-1$ and $D_2^\top\left(G_0\right)=D_2^\top\left(G\right)-1.$ In all cases, we always have 
\begin{equation}\label{eq-a-1}
   D_1^{\top}(G_0)+D_2^{\top}(G_0)\le D_1^{\top}(G)+D_2^{\top}(G)-2. 
\end{equation}
By the inductive assumption, combining \eqref{eq-thm-2} and \eqref{eq-a-1}, we get
\[\begin{array}{ll}
    &\lambda_1(\operatorname{curl}^*(G)\operatorname{curl}(G))+\lambda_2(\operatorname{curl}^*(G)\operatorname{curl}(G))\\[2mm]
    =&  \lambda_1(\operatorname{curl}^*(G_0)\operatorname{curl}(G_0))+\lambda_2(\operatorname{curl}^*(G_0)\operatorname{curl}(G_0))\\[2mm]
    \le&D_1^{\top}(G_0)+D_2^{\top}(G_0)\le D_1^{\top}(G)+D_2^{\top}(G)-2,
\end{array}\]
as desired.

\vspace{10pt}
\noindent\textbf{Case 2.} $|\triangle\cap V_1(G)|\le 1$ for every $\triangle\in T(G)$. 
\vspace{10pt}

If $V_1(G)=\emptyset$, then we have
\[D_1^\top(G) = |V(G)|=|V_1(G)|+|V_2(G)|=|V_2(G)|=D_2^\top(G).\]
Thus, \eqref{eq-dr-1} indicates
\[\begin{array}{ll}
     &\lambda_1\left(\operatorname{curl}^{*}(G)\operatorname{curl}(G)\right)+ \lambda_2\left(\operatorname{curl}^{*}(G)\operatorname{curl}(G)\right)  \\[2mm]
    \le &  2\lambda_1\left(\operatorname{curl}^{*}(G)\operatorname{curl}(G)\right) \le 2D_1^\top(G)= D_1^\top(G)+D_2^\top(G),
\end{array}\]
as desired. 

If $V_1(G)\ne\emptyset$, then for every $v\in V_1(G)$, there exists a unique edge $e$ in $G[V_2(G)]$ such that $e\cup\{v\}$ forms a triangle, i.e., $e\cup\{v\}\in T(G)$. For an edge $e\in G[V_2(G)]$, denote by
\[V_{1,e}=\{v\in V_1(G)\mid e\cup v\in T(G)\}.\]
Assume that $e_1,\ldots,e_k$ are all the edges in $G[V_2(G)]$ with $V_{1,e_i}\ne\emptyset$. Let $a_i=|V_{1,e_i}|$ for $1\le i\le k$. We may assume that $a_1\ge a_2\ge\cdots\ge a_k\ge 1$. Thus, $V_1(G)=\cup_{1\le i\le k}V_{1,e_i}$ and $G$ has the structure as shown in Figure~\ref{f-X}.
\begin{figure}[htbp]
	\centering
	\includegraphics[width=12cm]{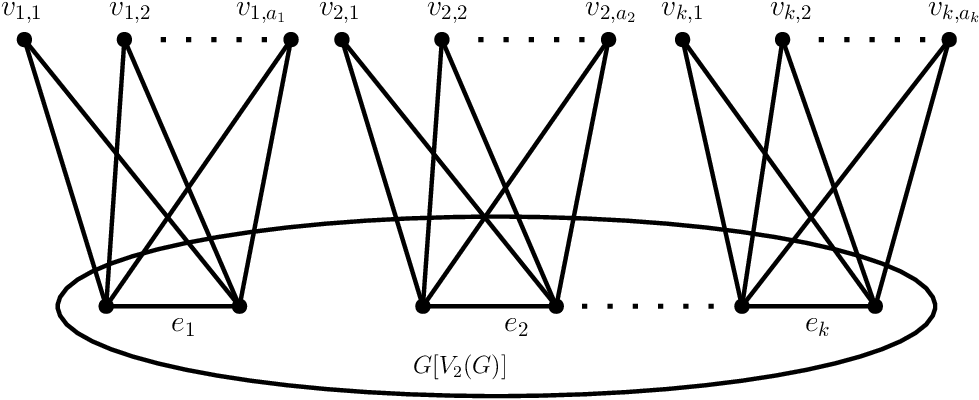}
\caption{The structure of $G$ (the edges in $V_2(G)$ are not shown).}
	\label{f-X}
\end{figure}
For convenience, denote by $T=T(G)$, $T_1=\{\triangle\in T(G)\mid \triangle\cap V_1(G)\ne\emptyset\}$, and $T_2=\{\triangle\in T(G)\mid \triangle\cap V_1(G)=\emptyset\}$.
Consider the $3$-families $T$, $T_1$ and $T_2$. Then from $T=T_1\sqcup T_2$ and the definition of $\partial_{2}\partial_{2}^*$ (see \cite{DR2002}), it follows that
 \begin{equation}\label{eq-l-00}
 	\partial_{2}(T)\partial_{2}^*(T)=\partial_{2}(T_1)\partial_{2}^*(T_1)+\partial_{2}(T_2)\partial_{2}^*(T_2),
 \end{equation}
 and that the $\partial_{2}(T)\partial_{2}^{*}(T)=\operatorname{curl}^{*}(G)\operatorname{curl}(G)$ . 
Therefore, combining Eq.~\eqref{eq-l-00} and Lemma \ref{thm-5}, we get
\begin{equation}\label{eq-l-1}
	\begin{array}{ll}
	   &\lambda_1\left(\operatorname{curl}^{*}(G)\operatorname{curl}(G)\right)+ \lambda_2\left(\operatorname{curl}^{*}(G)\operatorname{curl}(G)\right)\\[2mm]
       \le&\lambda_1\left(\partial_{2}(T_1)\partial_{2}^*(T_1)\right)+ \lambda_2\left(\partial_{2}(T_1)\partial_{2}^*(T_1)\right)+\lambda_1\left(\partial_{2}(T_2)\partial_{2}^*(T_2)\right) + \lambda_2\left(\partial_{2}(T_2)\partial_{2}^*(T_2)\right).
	\end{array}
\end{equation}
By noticing that $d_1^{\top}(T_2)=|V(T_2)|\le D_2^{\top}(G)$, from \cite[Proposition 6.2]{DR2002}, we get
\begin{equation}\label{eq-l-2}
	\lambda_1\left(\partial_{2}(T_2)\partial_{2}^*(T_2)\right) + \lambda_2\left(\partial_{2}(T_2)\partial_{2}^*(T_2)\right)\le 2\lambda_1\left(\partial_{2}(T_2)\partial_{2}^*(T_2)\right) \le 2 d_1^{\top}(T_2)\le 2D^\top_2(G).
\end{equation}
For the family $T_1$, the matrix representation of $\partial_{2}^*(T_1)\partial_2(T_1)$ is $\operatorname{diag}(A_1,\ldots,A_k)$ under suitable orientation, where $A_i=2I_{a_i}+J_{a_i}$, and $I$ and $J$ denote the identity matrix and the all-one matrix, respectively. This yields that
\[\mathbf{s}\left(\partial_{2}(T_1)\partial_{2}^*(T_1)\right)\stackrel{\circ}{=}\mathbf{s}\left(\partial_{2}^*(T_1)\partial_{2}(T_1)\right)\stackrel{\circ}{=}\left\{a_i+2, 2^{\left(|V_1(G)|-k\right)}\mid 1\le i\le k\right\},\]
which, combined with the relation \[\sum_{i=1}^{k}a_i=|V_1(G)|=|V(G)|-|V_2(G)|=D_1^{\top}(G)-D_2^{\top}(G),\] yields
\begin{equation}\label{eq-l-3}
	\lambda_1\left(\partial_{2}(T_1)\partial_{2}^*(T_1)\right)+ \lambda_2\left(\partial_{2}(T_1)\partial_{2}^*(T_1)\right)=a_1+a_2+4=D_1^{\top}(G)-D_2^{\top}(G)+4-\sum_{i=3}^{k}a_i.
\end{equation}
Combining \eqref{eq-l-1}, \eqref{eq-l-2} with  \eqref{eq-l-3}, we have
\[\begin{array}{lll}
	&&\lambda_1\left(\operatorname{curl}^{*}(G)\operatorname{curl}(G)\right)+ \lambda_2\left(\operatorname{curl}^{*}(G)\operatorname{curl}(G)\right)\\[2mm]
	&\le& 2D^\top_2(G)+D_1^{\top}(G)-D_2^{\top}(G)+4-\sum_{i=3}^{k}a_i\\[2mm]
	&=&D_1^{\top}(G)+D_2^{\top}(G)+4-\sum_{i=3}^{k}a_i.
\end{array}\]
If $k\ge 6$, then $\sum_{i=3}^k a_i \ge 4$, which immediately implies \eqref{eq-m-1}. Hence, it remains to consider the case $k\le 5$.

Let $|V_2(G)|=n$. By the construction of $G$ shown in Fig.~\ref{f-X}, $G$ is a spanning subgraph of some graph $G^{\prime}\in \mathcal{G}_{n;k}^{a_1,\ldots,a_k}$. One can easily verify that the matrix representation $\mathcal{C}(G)\mathcal{C}^\top(G)$ is a principal submatrix of
$\mathcal{C}(G')\mathcal{C}^\top(G')$. Hence, by the famous interlacing theorem (see, for example, \cite[Chapter 9]{GR2001}),
\begin{equation}\label{eq-l-0}
\lambda_1\!\left(\operatorname{curl}^{*}(G)\operatorname{curl}(G)\right)
+\lambda_2\!\left(\operatorname{curl}^{*}(G)\operatorname{curl}(G)\right)
\le
\lambda_1\!\left(\operatorname{curl}^{*}(G')\operatorname{curl}(G')\right)
+\lambda_2\!\left(\operatorname{curl}^{*}(G')\operatorname{curl}(G')\right).
\end{equation}
Since $D_1^{\top}(G')=D_1^{\top}(G)$ and $D_2^{\top}(G')=D_2^{\top}(G)$, it follows from \eqref{eq-l-0} that it suffices to show that  \[\lambda_1\left(\operatorname{curl}^{*}(G')\operatorname{curl}(G')\right)+ \lambda_2\left(\operatorname{curl}^{*}(G')\operatorname{curl}(G')\right)\le D_1^{\top}(G')+D_2^{\top}(G')\]
holds for any $G^{\prime}\in \mathcal{G}_{n;k}^{a_1,\ldots,a_k}$ and $k\le 5$. This is accomplished by the following lemma.

\begin{lemma}\label{lem-main-1}
	Let $n,k$ be integers with $\binom{n}{2}\ge k$ and $k\le 5$. Then, for every graph $G\in \mathcal{G}_{n;k}^{a_1,\ldots,a_k}$, we have
	\[\lambda_1\left(\operatorname{curl}^{*}(G)\operatorname{curl}(G)\right) +\lambda_2\left(\operatorname{curl}^{*}(G)\operatorname{curl}(G)\right)\le D_1^\top(G)+D_2^\top(G).\]
\end{lemma}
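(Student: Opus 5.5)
Fix $G\in\mathcal{G}_{n;k}^{a_1,\ldots,a_k}$. Then $D_1^\top(G)=n+\sum_i a_i$, and for $n\ge 3$ every vertex of $K_n$ lies in at least $\binom{n-1}{2}\ge 1$ triangles. Thus for $n\ge 4$ we get $D_2^\top(G)=n$. The small cases $n\le 3$ (so $k\le 3$) will be checked directly, as in Appendix~\ref{app-1}. The target is
\[
\lambda_1+\lambda_2\le 2n+\textstyle\sum_{i=1}^k a_i .
\]
I would split $T(G)=T_1\sqcup T_2$ as in Case~2, with $T_2=T(K_n)$ and $T_1$ the triangles through the attached vertices, and use Lemma~\ref{thm-5}. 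The sharper ingredient is the exact spectrum of the up-Laplacian of $K_n$. Since $T(K_n)$ is the full $2$-skeleton, $\partial_2\partial_2^*$ (i.e. $\operatorname{curl}^*\operatorname{curl}$ on $K_n$) has nonzero spectrum $\{n^{(\binom{n-1}{2})}\}$. Hence $\lambda_1+\lambda_2$ for $T_2$ equals $2n$ exactly, not merely at most $2n$.

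Combined with \eqref{eq-l-3}, the crude Ky Fan bound gives $2n+a_1+a_2+4$. This exceeds the target by $4-\sum_{i\ge3}a_i$, so the splitting alone is not enough. Instead I will compute the spectrum of $\mathcal{C}^\top\mathcal{C}$ for $G$ almost exactly, exploiting its structure. Let $W$ be the edge space of $K_n$. On $W$ the operator is $M_0+P$, where $M_0=\operatorname{curl}^*\operatorname{curl}(K_n)$ has eigenvalues $n$ on $\operatorname{im}\operatorname{curl}^*$ and $0$ on the cut space. The perturbation $P=\sum_i a_i\,\delta_{e_i}\delta_{e_i}^\top$ is a rank-$\le k$ diagonal term coming from the attached triangles. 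Each attached vertex $v\in V_{1,e_i}$ also contributes two pendant edges. After symmetrizing over the $a_i$ attached vertices of $e_i$, only the symmetric combination couples to $e_i$. Antisymmetric combinations and the ``difference'' combinations of the two edges give eigenvalues $2$ or $3$, which are harmless since $n\ge 3$.

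So the relevant eigenvalues are those of a reduced symmetric matrix. It acts on $W\oplus\mathbb{R}^k$, with one symmetric coordinate per chosen edge. Equivalently, pass to $\mathcal{C}\mathcal{C}^\top$ indexed by triangles: $T(K_n)$ plus $k$ aggregated blocks, where block $i$ has eigenvalue $a_i+2$ on its symmetric vector and couples to $K_n$ only through the single edge $e_i$.

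The key step, and the main obstacle, is bounding the two largest eigenvalues of this coupled system by $2n+\sum a_i$. I would first try a Rayleigh-quotient bound on the top two eigenvalues. This amounts to proving $\operatorname{tr}(Q^\top \mathcal{C}^\top\mathcal{C}\, Q)\le 2n+\sum a_i$ for any orthonormal pair $Q$. Decompose each vector into its $W$-part and its block parts. The $W$-part contributes at most $n$ per vector. The block parts contribute at most $a_i+2$ in total over the two vectors, since for each $i$ the two unit vectors restricted to the one-dimensional symmetric coordinate have combined squared norm at most $1$. The cross terms $2\sqrt{a_i}\,x_{e_i}y_i$ will be absorbed via AM--GM. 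This uses $\sum_{\text{two vectors}} x_{e_i}^2\le 1$ and the fact that $\delta_{e_i}$ has component of norm $\sqrt{(n-2)/n}$ in the image of $M_0$.

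If this convex-combination estimate falls short by the constant $4$, I would use $k\le 5$ and case-split on how the $e_i$ overlap (disjoint, sharing a vertex, forming a path or triangle). Each configuration gives an explicit small interaction matrix whose top two eigenvalues can be computed in closed form as functions of $n$ and the $a_i$. The surplus $\sum_{i\ge3}a_i$ together with the spare term $n-\lambda$ from the $W$-parts should close the gap; the case $k\le 2$ is the tightest and will be handled by explicit characteristic polynomials.
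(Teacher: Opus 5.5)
Your preliminary steps are sound. The values $D_1^\top(G)=n+\sum_i a_i$ and $D_2^\top(G)=n$ for $n\ge 4$ are right, the $T_1\sqcup T_2$ Ky Fan split is indeed $4-\sum_{i\ge3}a_i$ short, and the reduction to the edge space of $K_n$ plus one symmetric coordinate per chosen edge is correct. (The ``difference'' combination on the two pendant edges of an attached vertex lies in $\ker\operatorname{curl}$, so it gives eigenvalue $0$ rather than $2$ or $3$; this is harmless.)

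The gap is that the proposal stops exactly where the content of the lemma lies. The bound $\lambda_1+\lambda_2\le 2n+\sum_i a_i$ for the coupled system is never proved, and the estimates you sketch cannot deliver it. The bound is attained for $n\ge 4$: if $k=1$ then $\lambda_1=n+a_1$ and $\lambda_2=n$; if $k=2$ with disjoint chosen edges then $\lambda_1+\lambda_2=2n+a_1+a_2$. So any argument must be lossless.

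Your bookkeeping, with $n$ per vector from the $W$-part and $\sum_i(a_i+2)$ from the blocks, already exceeds the target by $2k$ before the cross terms are considered. Those cross terms are of order $\sqrt{a_i n}$ and can only be offset through the exact trade-off $\|x_m\|^2=1-\|y_m\|^2$; even for a single edge this closes only via an exact $2\times 2$ eigenvalue computation. When chosen edges share a vertex, the coupling vectors $\mathcal{C}(K_n)\delta_{e_i}$ are not orthogonal: their Gram matrix is $nI-L_S$, whose off-diagonal entry is $\pm1$ whenever $e_i$ and $e_j$ share a vertex. The cross terms therefore interact across different $i$ and across the two test vectors, and term-by-term AM--GM loses. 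The fallback of computing the top two eigenvalues of a $2k\times 2k$ system in closed form in $n,a_1,\dots,a_5$ is not a workable plan, and no argument is given for it.

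The missing idea is an exact reduction in which $\sum_i a_i$ appears as a trace.
\begin{enumerate}
\item For an eigenvalue $\lambda>n$, set $\mu=\lambda-n$. Eliminate the pendant coordinates; under a cyclic orientation of the attached triangles each equals $x_{e_i}/(\lambda-2)$.
\item Use $\mathcal{C}^\top(K_n)\mathcal{C}(K_n)=nI-\mathcal{B}(K_n)\mathcal{B}^\top(K_n)$, where $\mathcal{B}(K_n)\mathcal{B}^\top(K_n)$ is $n$ times an orthogonal projection, to invert $\mu I+\mathcal{B}(K_n)\mathcal{B}^\top(K_n)$ explicitly.
\item This shows that $\mu$ is an eigenvalue of the $k\times k$ matrix $M(\mu)=A_S+\frac{1}{\mu+n-2}A_S^{1/2}(2I-L_S)A_S^{1/2}$. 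Here $A_S=\operatorname{diag}(a_1,\dots,a_k)$ and $L_S$ is the principal submatrix of $\mathcal{B}(K_n)\mathcal{B}^\top(K_n)$ on the chosen edges.
\item This matrix is positive definite and has trace exactly $\sum_i a_i$, because $2I-L_S$ has zero diagonal. Hence its two largest eigenvalues satisfy $g_1(x)+g_2(x)\le\sum_i a_i$.
\item Since $\mu_1$ and $\mu_2$ are fixed points at different arguments, one still needs $g_1$ to be non-increasing. The paper proves this by a 26-configuration check and then concludes $\mu_1+\mu_2\le g_1(\mu_2)+g_2(\mu_2)\le\sum_i a_i$.
\end{enumerate}
The monotonicity also follows directly. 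The map $s\mapsto\lambda_{\max}(A_S+sE)$, with $E=A_S^{1/2}(2I-L_S)A_S^{1/2}$, is convex. Because $E$ has zero diagonal, it never falls below its value $\max_i a_i$ at $s=0$. It is therefore non-decreasing for $s\ge 0$, and substituting $s=1/(x+n-2)$ gives that $g_1$ is non-increasing.
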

\begin{proof}[\rm\textbf{Proof of Lemma \ref{lem-main-1}}]
In what follows, we provide a detailed proof only for the case $k=5$; the remaining cases are analogous and are therefore omitted. For the graph $G$,  we denote by $e_i$ the $i$-th chosen edge of $K_n$, and by $v_{i,1},\ldots,v_{i,a_i}$ the $a_i$ new vertices attached to $e_i$ in $G$, for each $1 \le i \le 5$.
 It follows from Lemma \ref{lem-3} that the spectrum of $\operatorname{curl}^{*}(G)\operatorname{curl}(G)$ is independent of the choice of orientations on $E(G)$ and $T(G)$. To simplify the subsequent analysis, we fix an orientation $\gamma$ on $E(G)$ and $T(G)$ for $G$ such that, for all $1 \le i \le 5$ and $1 \le j \le a_i$:
\begin{itemize}
\setlength{\itemsep}{-1pt}
\item[(i)] the three edges of each triangle $e_i \cup \{v_{i,j}\}$ are oriented according to the cyclic order;
\item[(ii)] each such triangle inherits the orientation from the cyclic order in (i), as shown in Figure \ref{f-2}.
\end{itemize}

\begin{figure}[htbp]
	\centering
	\includegraphics[width=14cm]{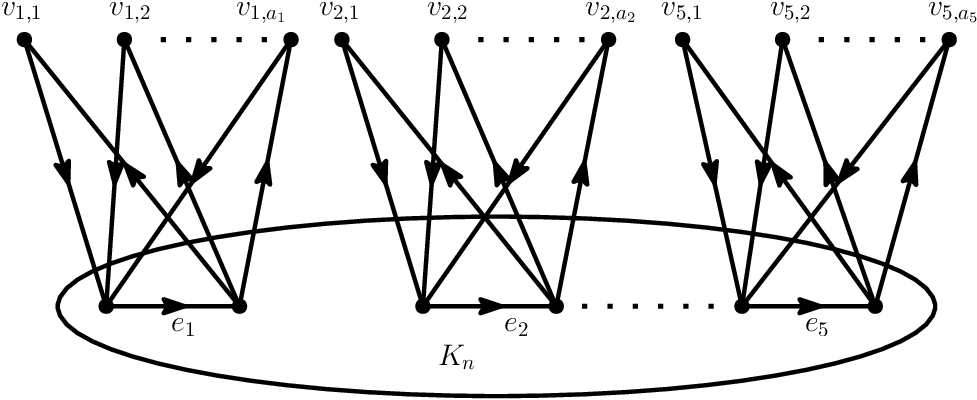}
	\caption{$ G\in \mathcal{G}_{n;5}^{a_1,\ldots,a_5}$, with the adjacency relations among $e_1,\ldots,e_5$ unspecified.}
	\label{f-2}
\end{figure}
 
	Since $D_1^\top(G)=n+\sum_{i=1}^{5} a_i$ and $D_2^\top(G)=n$, it suffices to show that \begin{equation}\label{eq-m-2}
	    \lambda_1\left(\operatorname{curl}^{*}(G)\operatorname{curl}(G)\right) +\lambda_2\left(\operatorname{curl}^{*}(G)\operatorname{curl}(G)\right) \le 2n+\sum_{i=1}^{5} a_i.
	\end{equation}
If $\lambda_2\left(\operatorname{curl}^{*}(G)\operatorname{curl}(G)\right)  \le n= D_2^{\top}(G)$, then we get \eqref{eq-m-2} immediately from Lemma \ref{thm-dr02-2}. \textbf{Therefore, it remains to consider the case for $\lambda_2\left(\operatorname{curl}^{*}(G)\operatorname{curl}(G)\right) > n$, which implies that the operator $\operatorname{curl}^{*}(G)\operatorname{curl}(G)$ has at least two eigenvalues greater than $n$.}

Let $\lambda$ be an eigenvalue of $\operatorname{curl}^{*}(G)\operatorname{curl}(G)$ satisfying $\lambda>n$. In particular, $\lambda$ is also an eigenvalue of its matrix representation $\mathcal{C}^\top (G)\mathcal{C}(G)$. Let $\mathbf{x}=(x_e)_{e\in E(G)} \in \mathbb{R}^{|E(G)|}$ be a corresponding eigenvector. From the eigenvalue equation $\left(\mathcal{C}^\top (G)\mathcal{C}(G)\right)\mathbf{x} = \lambda \mathbf{x}$, it follows that, for all $1\le i\le 5$ and $1\le j\le a_i$,
\[\left\{\begin{matrix}
	x_{\left\{v_{i,j},e^{-}_i\right\}}+x_{\left\{v_{i,j},e^{+}_i\right\}}+x_{e_i}=\lambda x_{\left\{v_{i,j},e^{-}_i\right\}}, \\[2mm]
	x_{\left\{v_{i,j},e^{+}_i\right\}}+x_{\left\{v_{i,j},e^{-}_i\right\}}+x_{e_i}=\lambda x_{\left\{v_{i,j},e^{+}_i\right\}},
\end{matrix}\right.\]
which immediately implies \begin{equation}\label{eq-3}
	x_{\left\{v_{i,j},e^{+}_i\right\}}=x_{\left\{v_{i,j},e^{-}_i\right\}}=\frac{1}{\lambda -2}x_{e_i}.
\end{equation}
Let $\mathbf{x}_0$ denote the restriction of $\mathbf{x}$ to $E(K_n)$. Then $\mathbf{x}_0\neq \mathbf{0}$, for otherwise $x_{e_i}=0$ for all $1\le i\le 5$, and hence $\mathbf{x}=\mathbf{0}$ by \eqref{eq-3}, a contradiction.

 Combining \eqref{eq-3} with the eigenvalue equation,  for all $1\le i\le 5$, we have
\begin{equation}\label{eq-4}
	\begin{cases}
		\frac{\lambda a_i }{\lambda -2}x_{e_{i}} +\left(\mathcal{C}^\top(K_n) \mathcal{C}(K_n)\mathbf{x}_0\right)_{e_i}=\lambda x_{e_{i}}, &  1\le i\le 5,\\[2mm]
		\left(\mathcal{C}^\top(K_n) \mathcal{C}(K_n)\mathbf{x}_0\right)_{e}=\lambda x_{e}, & e\in E(K_n)\setminus \{e_i\mid  1\le i\le 5\}.\end{cases}
\end{equation}
Noticing that $\mathcal{B}(K_n)\mathcal{B}^{\top}(K_n)+\mathcal{C}^{\top} (K_n)\mathcal{C}(K_n)=\mathcal{H}(K_n)=nI$, it follows from \eqref{eq-4} that
\begin{equation}\label{eq-5}
	\begin{cases}
		(\lambda-n)x_{e_{i}}+\left(\mathcal{B}(K_n)\mathcal{B}^{\top}(K_n)\mathbf{x}_0\right)_{e_i}=	\frac{\lambda a_i }{\lambda -2}x_{e_{i}}, &  1\le i\le 5,\\[2mm]
	(\lambda-n)x_{e}+\left(\mathcal{B}(K_n)\mathcal{B}^{\top}(K_n)\mathbf{x}_0\right)_{e}=0, & e\in E(K_n)\setminus \{e_i\mid  1\le i\le 5\}.\end{cases}
\end{equation}
Set $\mu=\lambda-n>0$ and $S=\{e_1,e_2,e_3,e_4,e_5\}$. Rearranging the rows and columns so that the edges in $S$ correspond to the first five indices, \eqref{eq-5} can be rewritten as
\begin{equation}\label{eq-6}
	\bigl(\mu I+\mathcal{B}(K_n)\mathcal{B}^{\top}(K_n)\bigr)\mathbf{x}_0=\frac{\mu+n}{\mu+n-2}A\mathbf{x}_0,
\end{equation}
where
$
A=\operatorname{diag}(a_1,a_2,a_3,a_4,a_5,0,\ldots,0).
$
Since $\mu>0$ and $\mathcal{B}(K_n)\mathcal{B}^{\top}(K_n)$ is positive semi-definite, the matrix $\mu I+\mathcal{B}(K_n)\mathcal{B}^{\top}(K_n)$ is invertible. Hence, by \eqref{eq-6}, we get
\begin{equation}\label{eq-7}
	\mathbf{x}_0
	=
	\frac{\mu+n}{\mu+n-2}
	\bigl(\mu I+\mathcal{B}(K_n)\mathcal{B}^{\top}(K_n)\bigr)^{-1}
	A\mathbf{x}_0.
\end{equation}
It follows from \eqref{eq-02} and $\mathbf{s}\left(\Delta_0(K_n)\right)=\left\{n^{(n-1)},0\right\}$ that the only nonzero eigenvalue of $\mathcal{B}(K_n)\mathcal{B}^{\top}(K_n)$ is $n$.  Let $P$ and $Q$ denote the orthogonal projections, with respect to the standard orthonormal basis, onto the eigenspaces corresponding to the eigenvalues $n$ and $0$, respectively. Then $P+Q=I$, and the spectral decomposition of $\mathcal{B}(K_n)\mathcal{B}^{\top}(K_n)$ is 
\[\mathcal{B}(K_n)\mathcal{B}^{\top}(K_n)=nP+0Q=nP.\]
Consequently, we have $\mu I+\mathcal{B}(K_n)\mathcal{B}^{\top}(K_n)=\mu I+nP=(\mu+n)P+\mu Q$,
and hence
\[\begin{array}{ll}
		&\bigl(\mu I+\mathcal{B}(K_n)\mathcal{B}^{\top}(K_n))\bigr)^{-1}=\frac{1}{\mu+n}P+\frac{1}{\mu}Q\\[2mm]
        =&\frac{1}{\mu}I-\frac{n}{\mu(\mu+n)}P=\frac{1}{\mu}I-\frac{1}{\mu(\mu+n)}\mathcal{B}(K_n)\mathcal{B}^{\top}(K_n).
\end{array}\]
Substituting this into \eqref{eq-7}, we obtain
\begin{equation}\label{eq-a-3}
    \mathbf{x}_0=\frac{\mu+n}{\mu+n-2}\left(\frac{1}{\mu}I-\frac{1}{\mu(\mu+n)}\mathcal{B}(K_n)\mathcal{B}^{\top}(K_n)\right)A\mathbf{x}_0.
\end{equation}
Let $\mathbf{x}_S=\mathbf{x}_0|_S$ denote the restriction of $\mathbf{x}_0$ to the coordinates indexed by $S$. Denote by $L_S$ and $A_S$ the principal submatrices of $\mathcal{B}(K_n)\mathcal{B}^{\top}(K_n)$ and $A$, respectively, corresponding to the indices in $S$. From \eqref{eq-a-3}, we get
\[\mu\mathbf{x}_S=\left(\frac{\mu+n}{\mu+n-2}A_S-\frac{1}{\mu+n-2}L_SA_S\right)\mathbf{x}_S.\]
Multiplying both sides on the left by $A_S^{1/2}$, we further obtain
\begin{equation}\label{eq-8}
	\mu A^{\frac{1}{2}}_S\mathbf{x}_S=\left(\frac{\mu+n}{\mu+n-2}A_S-\frac{1}{\mu+n-2}A^{\frac{1}{2}}_SL_SA^{\frac{1}{2}}_S\right)A^{\frac{1}{2}}_S\mathbf{x}_S.
\end{equation}

Let $M(x) =\frac{x+n}{x+n-2}A_S-\frac{1}{x+n-2}A_S^{1/2}L_SA_S^{1/2}$.  For any $x>0$, let $g_1(x)\ge g_2(x)\ge g_3(x)\ge g_4(x)\ge g_5(x)$ denote the eigenvalues of $M(x)$ arranged in non-increasing order. For $M(x)$ with $x>0$, on the one hand,
by noticing that
\[\begin{aligned}
	M(x)&=\frac{x+n}{x+n-2}A_S-\frac{1}{x+n-2}A_S^{1/2}L_SA_S^{1/2} \\&=\frac{x}{x+n-2}A_S+\frac{1}{x+n-2}A_S^{1/2}(nI-L_S)A_S^{1/2},
\end{aligned}\]
we conclude that $M(x)$ is positive definite since $nI-L_S$ is positive semidefinite. Thus, $g_i(x)>0$ for all $1\le i\le 5$. On the other hand, by noticing that
\begin{equation}\label{eq-10}
	\begin{aligned}
		M(x)&=\frac{x+n}{x+n-2}A_S-\frac{1}{x+n-2}A_S^{1/2}L_SA_S^{1/2} \\&=A_S+\frac{1}{x+n-2}\left(2A_S-A_S^{1/2}L_SA_S^{1/2}\right)\\&=A_S+\frac{1}{x+n-2}A_S^{1/2}\left(2I-L_S\right)A_S^{1/2},
	\end{aligned}
\end{equation}
we get $\sum_{i=1}^{5}g_i(x)=\operatorname{Tr}(M(x))=\operatorname{Tr}(A_S)=\sum_{i=1}^{5}a_i$ since the diagonal entries of $2I-L_S$ are $0$ and $A_S=\operatorname{diag}(a_1,\ldots,a_5)$ is diagonal. Therefore, for any $x>0$, we have
\begin{equation}\label{eq-9}
	g_1(x)+g_2(x)\le \sum_{i=1}^{5}a_i.
\end{equation}

Equation \eqref{eq-8} indicates that $\mu$ is an eigenvalue of $M(\mu)$. Hence, there exists $1\le t\le 5$ such that $\mu=g_t(\mu)$. For $i=1,2$, define $\mu_i=\lambda_i\left(\operatorname{curl}^{*}\operatorname{curl}\right)-n$. Therefore, there exists $1\le t_i\le 5$ such that $\mu_i=g_{t_i}(\mu_i)$ for each $i\in\{1,2\}$,  i.e., each $\mu_i$ can be interpreted as an intersection point of the graphs $y=x$ and $y=g_{t_i}(x)$. 

\begin{claim}\label{cl-1}
    $g_1(x)$ is non-increasing on $(0,\infty)$.
\end{claim}
\begin{proof}
For each $x>0$, let $\mathbf y_x$ be a unit eigenvector of $M(x)$ corresponding to its largest eigenvalue $g_1(x)$. For any $\mathbf z \in \mathbb{R}^5$ indexed by $S$, from \eqref{eq-10}, we have \[\begin{aligned}
	\mathbf z^\top M(x)\mathbf z&= \mathbf z^\top A_S \mathbf z+ \frac{2}{x+n-2} \sum_{i<j} \sqrt{a_i a_j}\, \bigl(- (L_S)_{e_i,e_j}\bigr)\, \mathbf z_{e_i} \mathbf z_{e_j}\\&=\sum_{i=1}^{5}a_i\mathbf z_{e_i}^2+ \frac{2}{x+n-2} h(\mathbf{z}),
\end{aligned}\]
where $h(\mathbf z) =\sum_{i<j} \sqrt{a_i a_j}\, \bigl(- (L_S)_{e_i,e_j}\bigr)\, \mathbf z_{e_i} \mathbf z_{e_j}$. 
To establish that $g_1(x)$ is non-increasing on $(0,\infty)$, it suffices to show that $h(\mathbf y_x) \ge 0$ for all $x>0$, where $\mathbf y_x$ is a unit eigenvector of $M(x)$ corresponding to $g_1(x)$. Indeed, assuming this, for any $x_2 \ge x_1 > 0$,
\[
\begin{aligned}
	g_1(x_1) 
	&= \max_{\|\mathbf z\|=1} \mathbf z^\top M(x_1) \mathbf z 
	\ge \mathbf y_{x_2}^\top M(x_1) \mathbf y_{x_2} \\
	&= \mathbf y_{x_2}^\top A_S \mathbf y_{x_2} + \frac{2}{x_1+n-2} h(\mathbf y_{x_2}) 
	\ge \mathbf y_{x_2}^\top A_S \mathbf y_{x_2} + \frac{2}{x_2+n-2} h(\mathbf y_{x_2}) \\
	&= \mathbf y_{x_2}^\top M(x_2) \mathbf y_{x_2} = g_1(x_2),
\end{aligned}
\]
which implies that $g_1(x)$ is non-increasing on $(0,\infty)$.

It therefore remains to verify that $h(\mathbf y_x) \ge 0$ for all $x>0$. Since the verification is technical and tedious, we relegate it to Appendix \ref{app-2}.
\end{proof}

If $\mu_1=\mu_2=\mu$, then they are both eigenvalues of $M(\mu)$. Therefore, $\mu_1+\mu_2= g_{t_1}(\mu)+g_{t_2}(\mu)\le g_1(\mu)+g_2(\mu)\le\sum_{i=1}^5a_i$, and thus \eqref{eq-m-2} holds. If $\mu_1>\mu_2$, then from Claim \ref{cl-1}, we have
\[\mu_1+\mu_2=g_{t_1}(\mu_1)+g_{t_2}(\mu_2)\le g_{1}(\mu_1)+g_{t_2}(\mu_2)\le g_1(\mu_2)+g_{t_2}(\mu_2)\le \sum_{i=1}^5a_i,\]
and thus \eqref{eq-m-2} holds.

This completes the proof.

\end{proof}

\subsection{Proof of Corollary \ref{thm-main-2}}

From \eqref{eq-l-a-1}, we obtain that $\lambda_i(\Delta_1(G))\in \mathbf{s}(\Delta_0(G))\cup \mathbf{s}(\operatorname{curl}^*(G)\operatorname{curl}(G))$ for $i\in\{1,2\}$. If both $\lambda_1(\Delta_1(G))$ and $\lambda_2(\Delta_1(G))$ belong to $\mathbf{s}\left(\operatorname{curl}^*(G)\operatorname{curl}(G)\right)$, then 
\[\lambda_i(\Delta_1(G))=\lambda_i(\operatorname{curl}^*(G)\operatorname{curl}(G))\]
for $i\in\{1,2\}$.
From Theorem \ref{thm-main-1}, we have
\[\lambda_1(\Delta_1(G))+\lambda_2(\Delta_1(G)) \le D_1^\top(G)+D_2^\top(G).
\] 
If neither $\lambda_1(\Delta_1)$ nor $\lambda_2(\Delta_1)$ belongs to $\mathbf{s}\left(\operatorname{curl}^*(G)\operatorname{curl}(G)\right)$, then $\lambda_i(\Delta_1(G))\in \mathbf{s}(\Delta_0(G))$ for $i\in\{1,2\}$, and thereby
\[\lambda_i(\Delta_1(G))=\lambda_i(\Delta_0(G))\]
for $i\in\{1,2\}$.  By a result of Bai~\cite{B2011}, we have
\[\lambda_1(\Delta_1(G))+\lambda_2(\Delta_1(G)) \le d_1^\top(G)+d_2^\top(G).
\] 
Otherwise, exactly one of $\lambda_1(\Delta_1(G))$ and $\lambda_2(\Delta_1(G))$ belongs to $\mathbf{s}\left(\operatorname{curl}^*(G)\operatorname{curl}(G)\right)$,
while the other belongs to $\mathbf{s}\!\left(\Delta_0(G)\right)$.
Then, either 
\[\lambda_1(\Delta_1(G))=\lambda_1(\Delta_0(G)),\quad\lambda_2(\Delta_1(G))=\lambda_1(\operatorname{curl}^*(G)\operatorname{curl}(G)),\]
or
\[\lambda_1(\Delta_1(G))=\lambda_1(\operatorname{curl}^*(G)\operatorname{curl}(G)),\quad\lambda_2(\Delta_1(G))=\lambda_1(\Delta_0(G)).\]
Therefore, in either case, by Bai's inequality~\cite{B2011} and \eqref{eq-dr-1}, we have
\[\lambda_1(\Delta_1(G))+\lambda_2(\Delta_1(G)) \le d_1^\top(G)+D_1^\top(G).
\] 

This completes the proof.
\subsection{Proof of  Corollary \ref{cor-simplicial}}

The proof proceeds by proving the following two lemmas.
\begin{lemma}\label{lem-9}
Let $K$ be a $3$-family on vertex set $V(K)$ satisfying $d_2^\top(K)\le 2$. Then
\[
\lambda_1(\partial_{2}(K)\partial_{2}^*(K)) + \lambda_2(\partial_{2}(K)\partial_{2}^*(K)) \le d_1^\top(K) + d_2^\top(K),
\]
where $\partial_{2}(K)\partial_{2}^*(K)$ denotes the up-Laplacian of $K$.    
\end{lemma}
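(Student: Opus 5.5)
The plan is to compute the nonzero spectrum of $\partial_2(K)\partial_2^*(K)$ explicitly. The hypothesis $d_2^\top(K)\le 2$ forces a very rigid structure on $K$.

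\emph{Reduction to $3$-sets.} First I would pass from the up-Laplacian on $1$-chains to the Gram matrix of boundaries on $3$-sets. Since $\mathbf{s}(\partial_2\partial_2^*)\stackrel{\circ}{=}\mathbf{s}(\partial_2^*\partial_2)$, it suffices to work with the matrix indexed by $K$. Its diagonal entries are $3$. Its $(\sigma,\tau)$ entry is $\pm1$ when $|\sigma\cap\tau|=2$, and $0$ otherwise. As in Lemma~\ref{lem-3}, the spectrum does not depend on orientations, so signs can be normalized.

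\emph{Structure of $K$.} Let $u,v$ denote the (at most two) vertices of degree $\ge 2$. Any two $3$-sets sharing two vertices both have degree $\ge2$ at those vertices, so the shared pair must be $\{u,v\}$. Hence the only nonzero off-diagonal entries come from the $m\ge0$ sets $\{u,v,w_1\},\dots,\{u,v,w_m\}$.

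\emph{The spectrum.} With a suitable orientation this block is $2I_m+J_m$, with eigenvalues $m+2$ and $2^{(m-1)}$. Every other $3$-set forms a $1\times1$ block $(3)$. So the nonzero spectrum is $\{m+2,2^{(m-1)}\}\uplus\{3^{(r)}\}$, where $r=|K|-m$. If fewer than two nonzero eigenvalues occur, the claim follows from the Duval--Reiner bound $\lambda_1\le d_1^\top(K)$ (\cite[Proposition 6.2]{DR2002}, as in Lemma~\ref{thm-dr02-2}). Otherwise $|K|\ge2$.

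\emph{Counting vertices.} Next I would bound $d_1^\top(K)=|V(K)|$ from below, with vertices of degree $0$ discarded, by counting new vertices. Sets outside the $uv$-block pairwise share at most one vertex, and that shared vertex lies in $\{u,v\}$. Each such set adds at least two vertices not already present, and at least three if it avoids $\{u,v\}$.

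\emph{Case analysis on $m$.}
\begin{itemize}
\item $m\ge2$: then $d_2^\top=2$ and $d_1^\top\ge m+2$. The top two eigenvalues are $m+2$ and either $2$ (if $r=0$) or $3$ (if $r\ge1$). In the second case $d_1^\top\ge m+4$. Both cases give the bound.
\item $m=1$: the top two eigenvalues are $3+3$. We have $d_1^\top\ge 5$, and $d_2^\top\ge1$ because some further set meets $\{u,v\}$ or adds three new vertices.
\item $m=0$: the top two eigenvalues are $3+3=6$. Two $3$-sets sharing a vertex give $d_1^\top\ge5$ and $d_2^\top\ge1$. Disjoint $3$-sets give $d_1^\top\ge6$.
\end{itemize}

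The main obstacle is the structural claim that no two $3$-sets share an edge other than $uv$. Once that is established, everything reduces to the small bookkeeping above. The most delicate part of that bookkeeping is $m=1$, where one must check that a second $3$-set always adds enough vertices or raises $d_2^\top$.
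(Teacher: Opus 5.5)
Your proof is correct, but it takes a different route from the paper. The paper never computes a spectrum for this lemma. It shows that $K$ coincides with the triangle set $T(G)$ of its $1$-skeleton $G$. Indeed, a triangle $\{i,j,k\}$ of $G$ not in $K$ would need three distinct sets $\{i,j,x\},\{j,k,y\},\{k,i,z\}\in K$. That makes $i,j,k$ all of degree $\ge 2$, so $d_2^\top(K)\ge 3$. Hence $\partial_2(K)\partial_2^*(K)=\operatorname{curl}^*(G)\operatorname{curl}(G)$ and $D_i^\top(G)=d_i^\top(K)$, and the paper then simply invokes Theorem~\ref{thm-main-1}.

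You instead work intrinsically with the Gram matrix $\partial_2^*\partial_2$ on $K$. You use a different structural fact, namely that any pair shared by two sets must be $\{u,v\}$, and read off the nonzero spectrum $\{m+2,2^{(m-1)},3^{(r)}\}$ directly. This is exactly the computation the paper carries out in Appendix~\ref{app-1} for $tK_3$, $F_s\cup(t-s)K_3$, $G_1$ and $G_2$. The $D_2^\top\le 2$ case of Theorem~\ref{thm-main-1} ultimately rests on that computation.

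So your argument is self-contained: it needs neither the realization step $K=T(G)$ nor the classification of graphs with $D_2^\top\le 2$. The paper's version is shorter on the page because it reuses the graph theorem, and its structural fact is what transfers the graph result to $K$.

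One small wording point. In your case $m=1$, the vertices $u,v$ are by definition both of degree $\ge 2$, so $d_2^\top(K)=2$ outright. Your sentence ``$d_2^\top\ge1$ because some further set meets $\{u,v\}$ or adds three new vertices'' should read ``either $d_2^\top\ge 1$ or $d_1^\top\ge 6$.'' That is what the disjunction actually gives, and it is all you need for $d_1^\top+d_2^\top\ge 6$.
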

\begin{proof}
Let $G$ be the graph of $K$, that is, the graph with vertex set $V(G)=V(K)$ and edge set $E(G)$ being all the $2$-subsets of a set in $K$. If $\triangle=\{i,j,k\}\in T(G)$, then we claim $\{i,j,k\}\in K$. Otherwise, there exists $x,y,z\in V(K)$ such that $\{i,j,x\},\{j,k,y\},\{k,i,z\}\in K$. This leads to $d_i(K),d_j(K),d_k(K)\ge 2$, and thus $d_2^T(K)\ge 3$, a contradiction. Hence, $T(G)\subseteq K$. Conversely, for any set $\{i,j,k\}\in K$, we obtain that $\{i,j\},\{j,k\},\{k,i\}\in E(G)$, and thus $\{i,j,k\}\in T(G)$. Hence, $K\subseteq T(G)$. This yields $T(G)=K$. Therefore, we have $\partial_{2}(K)\partial_{2}^{*}(K)=\operatorname{curl}^*(G)\operatorname{curl}(G)$ and $D_i^\top(G)=d_i^\top(K)$ for $i\in\{1,2\}$.
It follows from Theorem \ref{thm-main-1} that 
\[\begin{array}{lll}
 &&\lambda_1(\partial_{2}(K)\partial_{2}^*(K)) + \lambda_2(\partial_{2}(K)\partial_{2}^*(K))\\[2mm]
 &=&\lambda_1\left(\operatorname{curl}^{*}(G)\operatorname{curl}(G)\right)+ \lambda_2\left(\operatorname{curl}^{*}(G)\operatorname{curl}(G)\right)\\[2mm]
 &\le& D_1^{\top}(G)+D_2^{\top}(G)=d_1^\top(K)+ d_2^\top(K),  
\end{array}\]
as desired.
\end{proof}

\begin{lemma}
Let $K$ be a $3$-family on vertex set $V(K)$ satisfying $d_2^\top(K)\ge 3$. Then
\[
\lambda_1(\partial_{2}(K)\partial_{2}^*(K)) + \lambda_2(\partial_{2}(K)\partial_{2}^*(K)) \le d_1^\top(K) + d_2^\top(K),
\]
where $\partial_{2}(K)\partial_{2}^*(K)$ denotes the up-Laplacian of $K$.    
\end{lemma}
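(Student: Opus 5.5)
We prove the lemma directly, without passing to an auxiliary graph: when $d_2^\top(K)\ge 3$, the argument of Duval--Reiner already gives enough room. The plan is to bound the two largest eigenvalues separately:
\[
\lambda_1\le d_1^\top(K),\qquad \lambda_2\le d_2^\top(K).
\]
The first bound is \cite[Proposition~6.2]{DR2002}, the $k=3$ analogue of Lemma~\ref{thm-dr02-2}. It remains to show $\lambda_2(\partial_2(K)\partial_2^*(K))\le \max\{3,\,d_2^\top(K)\}$, which equals $d_2^\top(K)$ by hypothesis.

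\textbf{Splitting $K$.} Let $W_1=\{v\in V(K):\deg_K(v)=1\}$ and $W_2=\{v:\deg_K(v)\ge 2\}$, and remove vertices of degree $0$. Write $K=K_1\sqcup K_2$, where $K_1$ consists of the $3$-sets meeting $W_1$ and $K_2$ of those contained in $W_2$. As in \eqref{eq-l-00},
\[
\partial_2(K)\partial_2^*(K)=\partial_2(K_1)\partial_2^*(K_1)+\partial_2(K_2)\partial_2^*(K_2).
\]
Here $V(K_2)\subseteq W_2$, so Duval--Reiner gives $\lambda_1(\partial_2(K_2)\partial_2^*(K_2))\le |W_2|=d_2^\top(K)$. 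The sets in $K_1$ each contain a private vertex, namely a vertex of $W_1$.

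\textbf{Bounding $\lambda_2$ by induction on $|K_1|$.} Take a $3$-set $\sigma=\{w,x,y\}\in K_1$ with $w\in W_1$. Work with the dual operator $\partial_2^*\partial_2$, which is indexed by the $3$-sets and has entries $\pm1$ between sets sharing a pair. Because $w$ lies only in $\sigma$, the edges $\{w,x\}$ and $\{w,y\}$ lie in no other set. This makes $\partial_2(K)\partial_2^*(K)$ a rank-one, positive semidefinite perturbation of $\partial_2(K-\sigma)\partial_2^*(K-\sigma)$ (one new column of $\partial_2$). By Weyl/Cauchy interlacing, adding a single column to $\partial_2$ raises the second eigenvalue to at most the old first eigenvalue:
\[
\lambda_2(K)\le \lambda_1(K-\sigma).
\]

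\textbf{Reducing to the case where every set meets $W_1$ at most once.} First use the Case~1 mechanism. If $\sigma$ has two or more vertices in $W_1$, then its edge with both ends in $W_1$ is private. In that case $\sigma$ splits off an eigenvalue exactly $3$, as in the proof of Theorem~\ref{thm-main-1}. Removing $\sigma$ then lowers $d_1^\top+d_2^\top$ by at least $2$, by the computation of \eqref{eq-a-1}, which uses only degrees and so applies verbatim to families. Hence induction on $|K|$ with $d_2^\top\ge3$ handles this case, and the base cases with $d_2^\top$ dropping below $3$ are covered by Lemma~\ref{lem-9}.

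So we may assume every $\sigma\in K_1$ meets $W_1$ in exactly one vertex $w_\sigma$, and $\{x,y\}=\sigma\setminus\{w_\sigma\}\subseteq W_2$. Let $\mathcal F$ be the family of such pairs $\{x,y\}$, with multiplicities $a_e$. Now $K$ has exactly the shape treated in Case~2, with $K_2$ playing the role of $T_2$, except that $K_2$ need not be all triangles of $W_2$.

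\textbf{Interlacing and Lemma~\ref{lem-main-1}.} Add to $K$ all $3$-subsets of $W_2$ to obtain $K'$. The up-Laplacian of $K$ is then a principal submatrix of that of $K'$ (via $\partial_2^*\partial_2$, indexed by $3$-sets), and $d_i^\top(K')=d_i^\top(K)$ for $i=1,2$. Since $K'$ is precisely the triangle family of a graph in $\mathcal G_{n;k}^{a_1,\dots,a_k}$ with $n=|W_2|$, interlacing as in \eqref{eq-l-0} reduces the claim to Theorem~\ref{thm-main-1} for that graph. (For $k\ge 6$ the Ky Fan estimate of Case~2 applies directly; otherwise Lemma~\ref{lem-main-1} applies.)

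\textbf{Main obstacle.} The delicate point is checking that $K'$ really is the full triangle family $T(G')$ of the graph $G'$, with no extra triangles created by the pendant vertices. This holds because each $w\in W_1$ is adjacent only to the two ends of its pair, which are adjacent to each other. One must also verify that the degree sequences agree. I expect that, once the Case~1 reduction is justified for families, the rest is bookkeeping.
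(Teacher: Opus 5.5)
\textbf{The announced strategy fails.} You cannot bound $\lambda_2(\partial_2(K)\partial_2^*(K))$ by $d_2^\top(K)$ on its own, even when $d_2^\top(K)\ge 3$. Take
\[
K=\{\{1,2,3\},\{1,2,p_1\},\{1,2,p_2\},\{1,3,q_1\},\{1,3,q_2\}\}.
\]
Here $\deg_K(1)=5$, $\deg_K(2)=\deg_K(3)=3$, and the other four vertices have degree $1$, so $d_1^\top(K)=7$ and $d_2^\top(K)=3$. Two sets share a pair exactly when the corresponding vertices of the bowtie $K_1\vee 2K_2$ (centre $\{1,2,3\}$) are adjacent. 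Orientations can be chosen so that $\partial_2^*(K)\partial_2(K)=3I+A(K_1\vee 2K_2)$. Its eigenvalues are $3+\frac{1\pm\sqrt{17}}{2}$, $4$, $2$, $2$. Hence $\lambda_2=4>3=\max\{3,d_2^\top(K)\}$, while $\lambda_1+\lambda_2\approx 9.56\le 10$, as the lemma predicts.

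So your paragraph ``Bounding $\lambda_2$ by induction on $|K_1|$'' cannot be completed. The inequality $\lambda_2(K)\le\lambda_1(K-\sigma)$ does hold, for any $\sigma$; the privacy of $w$ plays no role, since adding any set adds one column to $\partial_2$. But the only available bound on $\lambda_1(K-\sigma)$ is $d_1^\top(K-\sigma)$, not $d_2^\top(K)$. In the example, $\lambda_1(K-\sigma)>5$ for every pendant set $\sigma$.

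\textbf{What actually proves the lemma.} Your last two steps never use the separate bound, and they prove the sum inequality directly.
\begin{enumerate}
\item[(1)] Case 1 is an induction. A set with two vertices in $W_1$ has all three of its pairs private, so it splits off the eigenvalue $3$. If $\lambda_2\le 3$, use $\lambda_1\le d_1^\top$. Otherwise use the drop of $d_1^\top+d_2^\top$ by at least $2$, falling back on Lemma~\ref{lem-9} when $d_2^\top$ falls below $3$.
\item[(2)] Case 2 is a principal-submatrix embedding of $\partial_2^*(K)\partial_2(K)$ into $\partial_2^*(K')\partial_2(K')$, followed by interlacing and Theorem~\ref{thm-main-1}.
\end{enumerate}
This is essentially the paper's proof. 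Your handling of $d_2^\top$ dropping below $3$ is slightly more careful than the paper's.

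\textbf{Difference from the paper.} The only real difference is the ambient family in Case 2. The paper uses the graph $G$ of $K$ (all pairs covered by $K$) and checks $K\subseteq T(G)$ and $V_i(G)=V_i(K)$. You take $K'=K\cup\binom{W_2}{3}=T(G')$ with $G'\in\mathcal{G}_{n;k}^{a_1,\ldots,a_k}$. This lands directly in the setting of Lemma~\ref{lem-main-1}, with the Ky Fan estimate for $k\ge 6$. Your check that $T(G')=K'$ and $d_i^\top(K')=d_i^\top(K)$ is correct. Both embeddings work.

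\textbf{Fix.} Discard the separate-bound plan and the rank-one paragraph, and state the goal as the sum inequality throughout.
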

\begin{proof}
According to Proposition 6.2 in \cite{DR2002}, we have
\begin{equation}\label{eq-t2-1}
    \lambda_1\!\left(\partial_{2}(K)\partial_{2}^*(K)\right)\le d_1^\top(K).
\end{equation}
Define \[V_1(K) = \left\{v \in V(K) \mid  \operatorname{deg}_K(v) = 1\right\}\text{~and~} V_2(K) = \left\{v \in V(K) \mid  \operatorname{deg}_K(v) \ge 2\right\}.\] 
Then $V(K)=V_1(K)\sqcup V_2(K)$, $|V(K)|=d_1^\top(K)$ and $|V_2(K)|=d_2^\top(K)$. We now distinguish two cases and discuss them separately.

\vspace{10pt}
\noindent\textbf{Case 1.} There is $\sigma\in K$ such that $\left|\sigma\cap V_1(K)\right|\ge 2$.
\vspace{10pt}

In this case, we proceed by induction on $|K|$. 
Since $d_2^\top(K)\ge 3$, we have $|K|\ge 3$. If $|K|=3$, one can verify that $K$ has the form like $K=\{\{i,j,k\},\{k,x,y\},\{x,y,z\}\}$, i.e., $K\cong T(G_4)$ (see Fig.~\ref{f-thm-4}). Thus, the result follows immediately from Theorem~\ref{thm-main-1}. Next, we consider the case for $|K|\ge 4$. If $\lambda_2(\partial_{2}(K)\partial_{2}^*(K))\le 3$, then, by \eqref{eq-t2-1} and the assumption $d_2^\top(K)\ge 3$, we get the result immediately. So, we may assume $\lambda_2(\partial_{2}(K)\partial_{2}^*(K))\ge 4$. Choose a $3$-set $\sigma\in K$ with $\left|\sigma\cap V_1(K)\right|\ge 2$. Let $K'=K\setminus\{\sigma\}$. Since there is no set $\sigma'\in K'$ with $|\sigma\cap \sigma'|=2$, we get
\[\mathbf{s}\left(\partial_{2}(K)\partial_{2}^*(K)\right)\circ{=}\mathbf{s}\left(\partial_{2}(K')\partial_{2}^*(K')\right)\uplus\{3\}.\]
This yields $\lambda_i(\partial_2(K)\partial_2^*(K))=\lambda_i(\partial_2(K')\partial_2^*(K'))$. Moreover, if $\left|\sigma\cap V_1(K)\right|=3$, then $d_1^\top(K')=d_1^\top(K)-3$ and $d_2^\top(K')=d_2^\top(K)$. If $\left|\sigma\cap V_1(K)\right|=2$, then we have either $d_1^\top(K')=d_1^\top(K)-2$ and $d_2^\top(K')=d_2^\top(K)$, or $d_1^\top(K')=d_1^\top(K)-1$ and $d_2^\top(K')=d_2^\top(K)-1$. Thus, we always have 
\[d_1^\top(K')+d_2^\top(K')\le d_1^\top(K)+d_2^\top(K)-2.\]
Note that $|K'|=|K|-1$. By the induction hypothesis, we get
\[\begin{array}{lll}
     &&\lambda_1\left(\partial_{2}(K)\partial_{2}^*(K)\right)+\lambda_2\left(\partial_{2}(K)\partial_{2}^*(K)\right) \\[2mm]
     &=&\lambda_1\left(\partial_{2}(K')\partial_{2}^*(K')\right)+\lambda_2\left(\partial_{2}(K')\partial_{2}^*(K')\right)\\[2mm]
     &\le &d_1^\top(K')+d_2^\top(K')\le d_1^\top(K)+d_2^\top(K)-2,
\end{array}
	\]
as desired.

\vspace{10pt}
\noindent\textbf{Case 2.} For any $\sigma\in K$, $\left|\sigma\cap V_1(K)\right|\le 1$.
\vspace{10pt}

If $V_1(K)=\emptyset$, then $d_1^\top(K)=d_2^\top(K)$. It follows from \eqref{eq-t2-1} that
\[
\lambda_1\!\left(\partial_{2}(K)\partial_{2}^*(K)\right)+\lambda_2\!\left(\partial_{2}(K)\partial_{2}^*(K)\right) \le 2\lambda_1\!\left(\partial_{2}(K)\partial_{2}^*(K)\right) \le 2d_1^\top(K)= d_1^\top(K)+d_2^\top(K),
\]
as desired. If $V_1(K)\ne \emptyset$, let $G$ be the graph of $K$ as defined in the proof of Lemma \ref{lem-9}. Therefore, $K\subseteq T(G)$ as proved in the proof of Lemma \ref{lem-9}. This implies that the matrix representation of $\partial_2^*(K)\partial_2(K)$ is a principal submatrix of that of $\partial_2^*(T(G))\partial_2(T(G))$. Hence, by the interlacing theorem and the fact that $\mathbf{s}(\partial_{2}^*\partial_{2}) \stackrel{\circ}{=} \mathbf{s}(\partial_{2}\partial_{2}^*)$, we have
\begin{equation}\label{eq-thm-z-3}
\begin{array}{ll}
    &\lambda_1(\partial_{2}(K)\partial_{2}^*(K)) + \lambda_2(\partial_{2}(K)\partial_{2}^*(K))\\[2mm]
    \le& \lambda_1(\partial_{2}(T(G))\partial_{2}^*(T(G))) + \lambda_2(\partial_{2}(T(G))\partial_{2}^*(T(G))).
    \end{array}
\end{equation}
Recall the definitions of $V_1(G)$ and $V_2(G)$ from the second paragraph of Section \ref{se-3}. For the graph $G$, we have $V(G)=V(K)$,
$V_1(G)=V_1(K)$ and $V_2(G)=V_2(K)$, and thus 
\begin{equation}\label{eq-thm-z-4}
    d_1^\top(K)=|V(K)|=|V(G)|=D_1^{\top}(G)\text{  and  } d_2^\top(K)=|V_2(K)|=|V_2(G)|=D_2^{\top}(G).
\end{equation}
Since $\partial_{2}(T(G))\partial_{2}^{*}(T(G))=\operatorname{curl}^*(G)\operatorname{curl}(G)$, it follows from Theorem \ref{thm-main-1} and \eqref{eq-thm-z-4} that 
\begin{equation}\label{eq-thm-z-5}
\begin{array}{ll}
     & \lambda_1(\partial_{2}(T(G))\partial_{2}^*(T(G))) + \lambda_2(\partial_{2}(T(G))\partial_{2}^*(T(G))) \\[2mm]
     \le &  D_1^\top(G) + D_2^\top(G)=d_1^\top(K)+d_2^\top(K).
\end{array}
\end{equation}
Therefore, combining \eqref{eq-thm-z-3} with \eqref{eq-thm-z-5}, we have
\[
\lambda_1\!\left(\partial_{2}(K)\partial_{2}^*(K)\right)+\lambda_2\!\left(\partial_{2}(K)\partial_{2}^*(K)\right) \le d_1^\top(K)+d_2^\top(K).\]

This completes the proof.
\end{proof}

\section*{Declaration of Interest Statement}
The authors declare that they have no conflicts of interest to this work.

\section*{Acknowledgments}
Lu Lu is supported by National Natural Science Foundation of China (No. 12371362). Jianfeng Wang is supported by National Natural Science Foundation of China (No. 12371353).

\addcontentsline{toc}{section}{References}  
\bibliographystyle{plain}

{\small\begin{thebibliography}{99}
\bibitem{B2011} H. Bai, The Grone-Merris conjecture, Trans. Amer. Math. Soc. 363 (2011), no. 8, 4463–4474.

\bibitem{CRS2010} D. Cvetkovi\'c, P. Rowlinson, S. Simi\'c, An introduction to the theory of graph spectra. London Mathematical Society Student Texts. Cambridge University Press, Cambridge, 2010.

\bibitem{DLM2005} M. Desbrun, M. Leok, and J. E. Marsden, Discrete Poincar\'e lemma, Appl. Numer. Math., 53 (2005), pp. 231--248.

\bibitem{DHK2011} T. K. Dey, A. N. Hirani, and B. Krishnamoorthy, Optimal homologous cycles, total unimodularity, and linear programming, SIAM J. Comput., 40 (2011), pp. 1026--1044.

\bibitem{DR2002} A. M. Duval, V. Reiner, Shifted simplicial complexes are Laplacian integral, Trans. Amer. Math. Soc. 354 (11)(2002) 4313–4344.
\bibitem{FWW2024} Y. Z. Fan, H. F. Wu, Y. Wang, The largest Laplacian eigenvalue and the balancedness of simplicial complexes, J. Algebraic Comb. 61 (53) (2025).

\bibitem{GR2001} C. Godsil, G. Royle, Algebraic Graph Theory, Springer-Verlag, New York, 2001.

\bibitem{GM1994} R. Grone, R. Merris, The Laplacian spectrum of a graph. II. SIAM J. Discrete Math., 7(2)(1994) 221--229.

\bibitem{JLYY2011} X. Jiang, L. H. Lim, Y. Yao, Y. Ye, Statistical ranking and combinatorial Hodge theory, Math. Program., 127 (2011), pp. 203--244.

\bibitem{LHL2020}  L.-H. Lim, Hodge Laplacians on graphs, SIAM Rev. 62 (2020) 685--715.

\bibitem{MOA2011} A. W. Marshall, I. Olkin, and B. C. Arnold. Inequalities: theory of majorization and its applications. Springer Series in Statistics. Springer, New York, second edition, 2011.



\bibitem{FWW1983} F. W. Warner, Foundations of Differentiable Manifolds and Lie Groups, Grad. Texts in Math. 94, Springer-Verlag, New York, 1983. 

	\end{thebibliography}}
\appendix
\section{The eigenvalues of graphs in Fig.\ref{f-thm-1}}\label{app-1}
For $G\in \left\{t(G)K_3,F_s\cup (t(G)-s)K_3,G_1\right\}$, no two triangles of $G$ share an edge. Therefore, regardless of the choice of orientation, the matrix representation of $\operatorname{curl}\operatorname{curl}^*$ is diagonal and equals
\[
\operatorname{diag}(\underbrace{3,\ldots,3}_{t(G)}).
\]
Since $\mathbf{s}(\operatorname{curl}^*\operatorname{curl}) \stackrel{\circ}{=} \mathbf{s}(\operatorname{curl}\operatorname{curl}^*)$ and $t(G)\ge 2$, we have \begin{equation}\label{eq-f-1}
\lambda_1(\operatorname{curl}^*~\operatorname{curl})+\lambda_2(\operatorname{curl}^*~\operatorname{curl})=6.
\end{equation}
Moreover, if $G=t(G)K_3$ and $t(G)\ge 2$, then \[D_1^{\top}(G)+D_2^{\top}(G)=3t(G)\ge 6.\] 
If $G=F_s\cup (t(G)-s)K_3$, then $t(G)\ge s\ge 2$,  and thus
\[D_1^{\top}(G)+D_2^{\top}(G)=3t(G)-s+1+1\ge 2t(G)+2\ge 6.\]
If $G=G_1$, then $t(G)\ge \deg_G^{(2)}(u)+\deg_G^{(2)}(v)\ge 4$, and thus
\[D_1^{\top}(G)+D_2^{\top}(G)=3t(G)-\deg_G^{(2)}(u)-\deg_G^{(2)}(v)+2\ge 2t(G)+2\ge 6.\]
Therefore, in all cases,
\[
D_1^{\top}(G)+D_2^{\top}(G)\ge 6.
\]
Combining this with \eqref{eq-f-1}, we obtain
\[
\lambda_1(\operatorname{curl}^*\operatorname{curl})
+
\lambda_2(\operatorname{curl}^*\operatorname{curl})
\le
D_1^{\top}(G)+D_2^{\top}(G).
\]

For $G=G_2$, let
\[e:=\{u,v\},\qquad d_u:=\deg_G^{(2)}(u),\qquad d_v:=\deg_G^{(2)}(v),\qquad d_e:=\deg_G(e).\]
By inclusion--exclusion, we get $d_u+d_v-d_e\le t(G)$.
Hence,
\begin{equation}\label{eq-f-2}
    D_1^{\top}(G)+D_2^{\top}(G)=3t(G)-(d_u-1)-(d_v-1)+2=3t(G)-d_u-d_v+4\ge 2t(G)-d_e+4.
\end{equation}
Define \[T_1(G)\mid =\{\triangle\in T(G) \mid e\in\triangle\}\text{ and }T_2(G)\mid =T(G)\setminus\{\triangle\in T(G) \mid e\in\triangle\}.\]
Since any two triangles in $T_1(G)$ share the edge $e$, while no two triangles in $T_2(G)$ share an edge and no triangle in $T_1(G)$ shares an edge with a triangle in $T_2(G)$, the matrix representation $\mathcal{C}\mathcal{C}^\top$ of $\operatorname{curl}\operatorname{curl}^*$ is block diagonal. More precisely, after a suitable orientation and ordering of the triangles, we get
\[
\mathcal{C}\mathcal{C}^\top
=
\begin{pmatrix}
2I_{d_e}+J_{d_e} & 0\\
0 & \operatorname{diag}(\underbrace{3,\ldots,3}_{\,t(G)-d_e})
\end{pmatrix},
\]
where $J_{d_e}$ denotes the $d_e\times d_e$ all-ones matrix. Therefore, the non-zero spectrum of $\operatorname{curl}^*\operatorname{curl}$ is given by
\begin{equation}\label{eq-f-3}
\mathbf{s}\left(\operatorname{curl}^*\operatorname{curl}\right)\stackrel{\circ}{=} \mathbf{s}\left(\operatorname{curl}\operatorname{curl}^*\right) \stackrel{\circ}{=} \{d_e+2, 3^{\left(t(G)-d_e\right)}, 2^{(d_e-1)}\}.
\end{equation}
If $d_e\le t(G)-1$, then $d_e\le 2t(G)-d_e-2$. Combining this with \eqref{eq-f-2}, we have \[
\lambda_1(\operatorname{curl}^*\operatorname{curl})+
\lambda_2(\operatorname{curl}^*\operatorname{curl})=d_e+5\le  2t(G)-d_e+3< 2t(G)-d_e+4\le D_1^{\top}(G)+D_2^{\top}(G).
\]
Otherwise, $d_e= t(G)$, and from \eqref{eq-f-2} and \eqref{eq-f-3}, we have
\[
\lambda_1(\operatorname{curl}^*\operatorname{curl})+
\lambda_2(\operatorname{curl}^*\operatorname{curl})=d_e+4\le D_1^{\top}(G)+D_2^{\top}(G),
\] as desired.
\section{Non-negativity estimate}\label{app-2}
Recall that $S=\{e_1,e_2,e_3,e_4,e_5\}$, and that $L_S$ denotes the principal submatrix of $\mathcal{B}(K_n)\mathcal{B}(K_n)^\top$ induced by $S$. Moreover, recall that
\[\mathbf z^\top M(x)\mathbf z=\sum_{i=1}^{5} a_i \mathbf z_{e_i}^2+\frac{2}{x+n-2} h(\mathbf z),\]
where $h(\mathbf z):=\sum_{i<j}\sqrt{a_i a_j}\, \bigl(- (L_S)_{e_i,e_j}\bigr)\, \mathbf z_{e_i}\mathbf z_{e_j}.$
Here $\mathbf z \in \mathbb{R}^5$ is indexed by $S$, and $x,a_i>0$ for $1\le i\le 5$. For each $x>0$,  let $\mathbf y_x$ denote a unit eigenvector of $M(x)$ corresponding to its largest eigenvalue $g_1(x)$. In this appendix, we prove that \[h(\mathbf y_x) \ge 0 \quad \text{for all } x>0.\]

{\begin{center}
\setlength\LTleft{\fill}
\setlength\LTright{\fill}
\begin{longtable}{|c|c|c|c|}
\caption{\small All the 26 configurations with suitable orientations}\label{tab-f} \\ 
\hline
\endfirsthead  
\multicolumn{4}{c}{\textit{Continued from previous page}} \\
\hline
\endhead  
\hline
\multicolumn{4}{r}{\textit{Continued on next page}} \\
\endfoot  
\hline
\endlastfoot 

\includegraphics[width=0.18\textwidth]{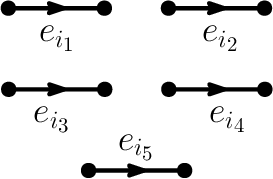} & 
\includegraphics[width=0.18\textwidth]{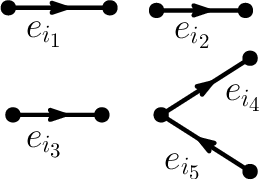} & 
\includegraphics[width=0.18\textwidth]{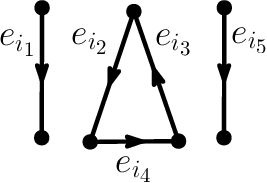} & 
\includegraphics[width=0.18\textwidth]{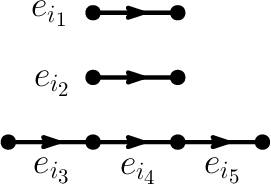} \\
(1) & (2) & (3) & (4) \\
\hline
\includegraphics[width=0.18\textwidth]{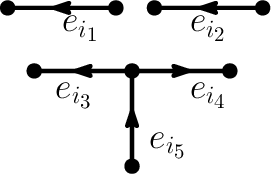} & 
\includegraphics[width=0.18\textwidth]{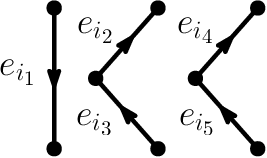} & 
\includegraphics[width=0.18\textwidth]{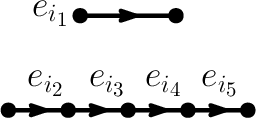} & 
\includegraphics[width=0.18\textwidth]{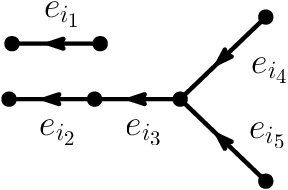} \\
(5) & (6) & (7) & (8) \\
\hline
\includegraphics[width=0.18\textwidth]{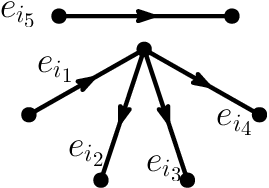} & 
\includegraphics[width=0.18\textwidth]{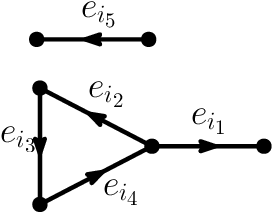} & 
\includegraphics[width=0.18\textwidth]{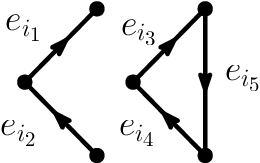} & 
\includegraphics[width=0.18\textwidth]{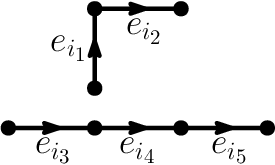} \\
(9) & (10) & (11) & (12) \\
\hline
\includegraphics[width=0.18\textwidth]{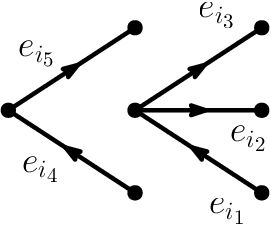} & 
\includegraphics[width=0.18\textwidth]{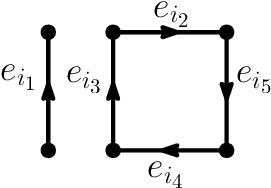} & 
\includegraphics[width=0.18\textwidth]{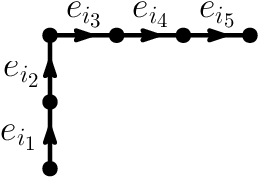} & 
\includegraphics[width=0.18\textwidth]{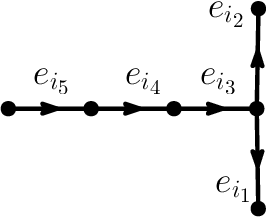} \\
(13) & (14) & (15) & (16) \\
\hline
\includegraphics[width=0.18\textwidth]{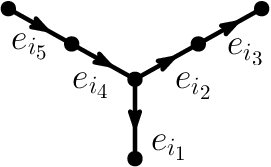} & 
\includegraphics[width=0.18\textwidth]{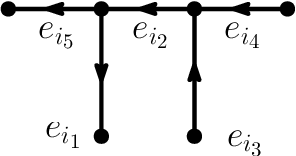} & 
\includegraphics[width=0.18\textwidth]{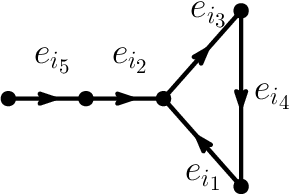} & 
\includegraphics[width=0.18\textwidth]{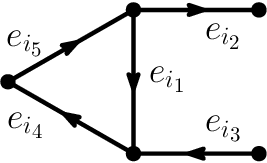} \\
(17) & (18) & (19) & (20) \\
\hline
\includegraphics[width=0.18\textwidth]{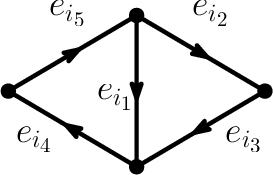} & 
\includegraphics[width=0.18\textwidth]{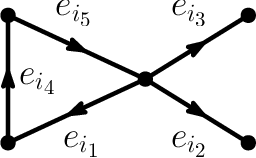} & 
\includegraphics[width=0.18\textwidth]{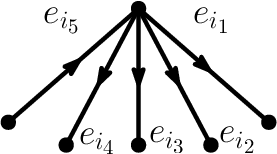} & 
\includegraphics[width=0.18\textwidth]{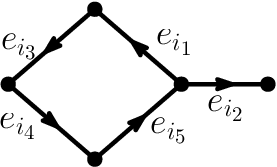} \\
(21) & (22) & (23) & (24) \\
\hline
\includegraphics[width=0.18\textwidth]{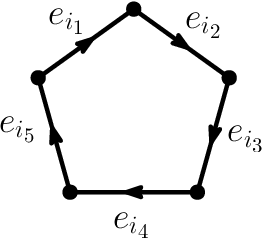} & 
\includegraphics[width=0.18\textwidth]{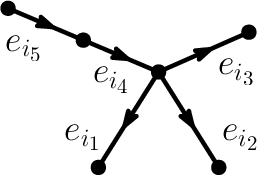} & 
 & 
 \\
(25) & (26) &  & \\
\hline
\end{longtable}\end{center}}

We now analyze the structure of $G[S]$. Up to graph isomorphism, there are exactly $26$ distinct configurations. For each configuration, we proceed as follows.
\begin{itemize}[leftmargin=3.5em, labelwidth=2.5em, labelsep=0.5em]
    \item[Step 1.] Choose a suitable orientation for the configuration.
    \item[Step 2.] Assume for contradiction that $h(\mathbf{y}_x)<0$.
    \item[Step 3.] Based on the chosen orientation, select a vector $\mathbf{z}_x\in\mathbb{R}^5$ satisfying 
    \[\|\mathbf{z}_x\|=\|\mathbf{y}_x\|\quad\text{and}\quad 
    \mathbf{z}_x^{\top}M(x)\mathbf{z}_x>\mathbf{y}_x^{\top}M(x)\mathbf{y}_x=g_1(x).\]
    \item[Step 4.] This contradicts the fact that $g_1(x)$ is the largest eigenvalue of $M(x)$.
\end{itemize}
We provide the details below.

\begin{itemize}
\item For the configurations (1), (2), (3), (4), (6), (7), (11), (12), (14), (15), (25).

Since $\Delta(G[S])\le 2$, there exists an orientation satisfying {\rm (i)} and {\rm (ii)} as given in the proof of Lemma \ref{lem-main-1} such that every pair of adjacent edges in $S$ is oriented consecutively through their common endpoint (see Table~\ref{tab-f}). Under this orientation, $(L_S)_{e_i,e_j}\in \{0,-1\}$ for all $i\ne j$, and thus $M(x)$ is a nonnegative symmetric matrix for every $x\ge 0$. Suppose, to the contrary, that $h(\mathbf y_x)<0$. Let $\mathbf z_x=|\mathbf{y}_x|$ be the vector in $\mathbb{R}^5$ whose components are the absolute values of the corresponding entries of $\mathbf{y}_x$. Since
\[
h(\mathbf z_x)=\sum_{i<j}\sqrt{a_i a_j}\,\bigl(-(L_S)_{e_i,e_j}\bigr)(\mathbf z_x)_{e_i}(\mathbf z_x)_{e_j}\ge 0 > h(\mathbf y_x),
\]
and 
\[
\|\mathbf z_x\|=\|\mathbf y_x\|=1,\qquad 
\sum_{i=1}^{5} a_i (\mathbf z_x)_{e_i}^2=\sum_{i=1}^{5} a_i (\mathbf y_x)_{e_i}^2,
\]
we obtain
\[
\begin{aligned}
\mathbf z_x^{\top}M(x)\mathbf z_x
&= \sum_{i=1}^{5} a_i (\mathbf z_x)_{e_i}^2+\frac{2}{x+n-2}h(\mathbf z_x)\\
&> \sum_{i=1}^{5} a_i (\mathbf y_x)_{e_i}^2+\frac{2}{x+n-2}h(\mathbf y_x)\\
&= \mathbf y_x^{\top}M(x)\mathbf y_x= g_1(x).
\end{aligned}
\]

\item {For the configurations (5).}

	We fix an orientation satisfying {\rm (i)} and {\rm (ii)} such that the edges in $S$ are oriented as depicted in Table~\ref{tab-f}. Under this orientation, for any $x>0$,
\[
h(\mathbf y_x)
=
\sqrt{a_{i_3}a_{i_5}}(\mathbf y_x)_{e_{i_3}}(\mathbf y_x)_{e_{i_5}}
+\sqrt{a_{i_4}a_{i_5}}(\mathbf y_x)_{e_{i_4}}(\mathbf y_x)_{e_{i_5}}
-\sqrt{a_{i_3}a_{i_4}}(\mathbf y_x)_{e_{i_3}}(\mathbf y_x)_{e_{i_4}}.
\]
Suppose, to the contrary, that $h(\mathbf y_x)<0$. If
\[
\sqrt{a_{i_5}}\bigl|(\mathbf y_x)_{e_{i_5}}\bigr|=
\max_{j\in\{3,4,5\}}
\sqrt{a_{i_j}}\bigl|(\mathbf y_x)_{e_{i_j}}\bigr|,
\]
set $\mathbf z_x=|\mathbf y_x|$. Otherwise, let $t\in\{3,4\}$ such that \[
\sqrt{a_{i_t}}\bigl|(\mathbf y_x)_{e_{i_t}}\bigr|=
\max_{j\in\{3,4,5\}}
\sqrt{a_{i_j}}\bigl|(\mathbf y_x)_{e_{i_j}}\bigr|,
\]
and define $\mathbf z_x\in\mathbb R^5$ by
\[
(\mathbf z_x)_{e_{i_j}}=\begin{cases}
	-\bigl|(\mathbf y_x)_{e_{i_j}}\bigr|, & j\in \{t,5\},\\
	\bigl|(\mathbf y_x)_{e_{i_j}}\bigr|, &\text{ otherwise}.
\end{cases}
\]
In either case,
\[h(\mathbf z_x)\ge0>h(\mathbf y_x),\]
while
\[\|\mathbf z_x\|=\|\mathbf y_x\|=1,\qquad\sum_{i=1}^{5} a_i (\mathbf z_x)_{e_i}^2=\sum_{i=1}^{5} a_i (\mathbf y_x)_{e_i}^2.\]
Consequently,
\[\begin{aligned}\mathbf z_x^{\top}M(x)\mathbf z_x
	&=	\sum_{i=1}^{5} a_i (\mathbf z_x)_{e_i}^2+\frac{2}{x+n-2}h(\mathbf z_x)\\
	&>\sum_{i=1}^{5} a_i (\mathbf y_x)_{e_i}^2+\frac{2}{x+n-2}h(\mathbf y_x)\\
	&=\mathbf y_x^{\top}M(x)\mathbf y_x\\
	&=g_1(x).\end{aligned}\]

\item {For the configurations (8).}

	We fix an orientation satisfying {\rm (i)} and {\rm (ii)} such that the edges in $S$ are oriented as depicted in Table~\ref{tab-f}. Under this orientation, for any $x>0$,
{\[\begin{array}{lll}
h(\mathbf y_x)
&=&
\sqrt{a_{i_2}a_{i_3}}(\mathbf y_x)_{e_{i_2}}(\mathbf y_x)_{e_{i_3}}
+\sqrt{a_{i_3}a_{i_4}}(\mathbf y_x)_{e_{i_3}}(\mathbf y_x)_{e_{i_4}}\\[2mm]
&&+\sqrt{a_{i_3}a_{i_5}}(\mathbf y_x)_{e_{i_3}}(\mathbf y_x)_{e_{i_5}}
-\sqrt{a_{i_4}a_{i_5}}(\mathbf y_x)_{e_{i_4}}(\mathbf y_x)_{e_{i_5}}.
\end{array}\]}
Suppose, to the contrary, that $h(\mathbf y_x)<0$. If
\[\sqrt{a_{i_3}}\bigl|(\mathbf y_x)_{e_{i_3}}\bigr|=\max_{j\in\{3,4,5\}}
\sqrt{a_{i_j}}\bigl|(\mathbf y_x)_{e_{i_j}}\bigr|,\]
set $\mathbf z_x:=|\mathbf y_x|$. Otherwise, let $t\in\{4,5\}$ such that \[\sqrt{a_{i_t}}\bigl|(\mathbf y_x)_{e_{i_t}}\bigr|=\max_{j\in\{3,4,5\}}
\sqrt{a_{i_j}}\bigl|(\mathbf y_x)_{e_{i_j}}\bigr|,\]
and define $\mathbf z_x\in\mathbb R^5$ by
\[
(\mathbf z_x)_{e_{i_j}}=\begin{cases}	-|(\mathbf y_x)_{e_{i_j}}|, & j\in\{t,2,3\},\\
	|(\mathbf y_x)_{e_{i_j}}|, & \text{ otherwise}.
\end{cases}
\]
In either case,
\[h(\mathbf z_x)\ge0>h(\mathbf y_x),\]
while
\[\|\mathbf z_x\|=\|\mathbf y_x\|=1,\qquad\sum_{i=1}^{5} a_i (\mathbf z_x)_{e_i}^2=\sum_{i=1}^{5} a_i (\mathbf y_x)_{e_i}^2.\]
Consequently,
\[\begin{aligned}\mathbf z_x^{\top}M(x)\mathbf z_x
		&=	\sum_{i=1}^{5} a_i (\mathbf z_x)_{e_i}^2+\frac{2}{x+n-2}h(\mathbf z_x)\\
		&>\sum_{i=1}^{5} a_i (\mathbf y_x)_{e_i}^2+\frac{2}{x+n-2}h(\mathbf y_x)\\
		&=\mathbf y_x^{\top}M(x)\mathbf y_x\\
		&=g_1(x).\end{aligned}\]

\item{For the configurations (9).}

We fix an orientation satisfying {\rm (i)} and {\rm (ii)} such that the edges in $S$ are oriented as shown in Table~\ref{tab-f}. Under this orientation,
\[
\begin{aligned}
	h(\mathbf y_x)
	&=\sqrt{a_{i_1}a_{i_3}}\,(\mathbf y_x)_{e_{i_1}}(\mathbf y_x)_{e_{i_3}}+\sqrt{a_{i_1}a_{i_2}}\,(\mathbf y_x)_{e_{i_1}}(\mathbf y_x)_{e_{i_2}}
	+\sqrt{a_{i_1}a_{i_4}}\,(\mathbf y_x)_{e_{i_1}}(\mathbf y_x)_{e_{i_4}}	\\&\quad-\sqrt{a_{i_2}a_{i_3}}\,(\mathbf y_x)_{e_{i_2}}(\mathbf y_x)_{e_{i_3}}-\sqrt{a_{i_2}a_{i_4}}\,(\mathbf y_x)_{e_{i_2}}(\mathbf y_x)_{e_{i_4}}-\sqrt{a_{i_3}a_{i_4}}\,(\mathbf y_x)_{e_{i_3}}(\mathbf y_x)_{e_{i_4}}.
\end{aligned}
\]
Suppose, to the contrary, that
$h(\mathbf y_x)<0$. If \[\sqrt{a_{i_1}}\bigl|(\mathbf y_x)_{e_{i_1}}\bigr|=\max_{j\in\{1,2,3,4\}}\sqrt{a_{i_j}}\bigl|(\mathbf y_x)_{e_{i_j}}\bigr|,\] 
then set $\mathbf z_x\mid =|\mathbf y_x|$.
Otherwise,  let $t\in\{2,3,4\}$ such that
\[\sqrt{a_{i_t}}\bigl|(\mathbf y_x)_{e_{i_t}}\bigr|=\max_{j\in\{1,2,3,4\}}\sqrt{a_{i_j}}\bigl|(\mathbf y_x)_{e_{i_j}}\bigr|,\] and 
define $\mathbf z_x\in\mathbb R^5$ by
\[(\mathbf z_x)_{e_{i_j}}=\begin{cases}	-|(\mathbf y_x)_{e_{i_j}}|, & j\in \{1,t\},\\[1mm]	|(\mathbf y_x)_{e_{i_j}}|, & \text{otherwise}.
\end{cases}
\]
A direct verification shows that, in each of the above cases, $h(\mathbf z_x)\ge 0>h(\mathbf y_x)$.
Moreover, since $\|\mathbf z_x\|=\|\mathbf y_x\|=1$ and $\sum_{i=1}^{5} a_i (\mathbf z_x)_{e_i}^2=\sum_{i=1}^{5} a_i (\mathbf y_x)_{e_i}^2$, we \[\begin{aligned}\mathbf z_x^{\top}M(x)\mathbf z_x
		&=	\sum_{i=1}^{5} a_i (\mathbf z_x)_{e_i}^2+\frac{2}{x+n-2}h(\mathbf z_x)\\
		&>\sum_{i=1}^{5} a_i (\mathbf y_x)_{e_i}^2+\frac{2}{x+n-2}h(\mathbf y_x)\\
		&=\mathbf y_x^{\top}M(x)\mathbf y_x\\
		&=g_1(x).\end{aligned}\]

\item{For the configurations (10).}

We fix an orientation satisfying {\rm (i)} and {\rm (ii)} such that the edges in $S$ are oriented as depicted in Table~\ref{tab-f}. Under this orientation, for any $x>0$,
\[
\begin{aligned}
	h(\mathbf y_x) &= 
	\sqrt{a_{i_2}a_{i_3}}\, (\mathbf y_x)_{e_{i_2}} (\mathbf y_x)_{e_{i_3}}
	+ \sqrt{a_{i_3}a_{i_4}}\, (\mathbf y_x)_{e_{i_3}} (\mathbf y_x)_{e_{i_4}}+ \sqrt{a_{i_2}a_{i_4}}\, (\mathbf y_x)_{e_{i_2}} (\mathbf y_x)_{e_{i_4}}
	\\	&\quad + \sqrt{a_{i_1}a_{i_4}}\, (\mathbf y_x)_{e_{i_1}} (\mathbf y_x)_{e_{i_4}}- \sqrt{a_{i_1}a_{i_2}}\, (\mathbf y_x)_{e_{i_1}} (\mathbf y_x)_{e_{i_2}}.
\end{aligned}
\]
Suppose, to the contrary, that $h(\mathbf y_x)<0$. If \[\sqrt{a_{i_4}}\bigl|(\mathbf y_x)_{e_{i_4}}\bigr|\ge \sqrt{a_{i_2}}\bigl|(\mathbf y_x)_{e_{i_2}}\bigr|,\]
then set $\mathbf z_x:=|\mathbf y_x|$. Otherwise, define $\mathbf z_x\in\mathbb R^5$ by
\[(\mathbf z_x)_{e_{i_j}}=\begin{cases}-|(\mathbf y_x)_{e_{i_j}}|, & j=1\\|(\mathbf y_x)_{e_{i_j}}|, & \text{otherwise}.
\end{cases}
\]
In either case,
\[h(\mathbf z_x)\ge0>h(\mathbf y_x),\]
while
\[\|\mathbf z_x\|=\|\mathbf y_x\|=1,\qquad\sum_{i=1}^{5} a_i (\mathbf z_x)_{e_i}^2=\sum_{i=1}^{5} a_i (\mathbf y_x)_{e_i}^2.\]
Consequently,
\[\begin{aligned}\mathbf z_x^{\top}M(x)\mathbf z_x
		&=	\sum_{i=1}^{5} a_i (\mathbf z_x)_{e_i}^2+\frac{2}{x+n-2}h(\mathbf z_x)\\
		&>\sum_{i=1}^{5} a_i (\mathbf y_x)_{e_i}^2+\frac{2}{x+n-2}h(\mathbf y_x)\\
		&=\mathbf y_x^{\top}M(x)\mathbf y_x\\
		&=g_1(x).\end{aligned}\]

\item {For the configurations (13).}

We fix an orientation satisfying {\rm (i)} and {\rm (ii)} such that the edges in $S$ are oriented as depicted in Table~\ref{tab-f}. Under this orientation, for any $x>0$,
\[
\begin{aligned}
	h(\mathbf y_x) &=  \sqrt{a_{i_1}a_{i_2}}\, (\mathbf y_x)_{e_{i_1}} (\mathbf y_x)_{e_{i_2}}+ \sqrt{a_{i_1}a_{i_3}}\, (\mathbf y_x)_{e_{i_1}} (\mathbf y_x)_{e_{i_3}}+ \sqrt{a_{i_4}a_{i_5}}\, (\mathbf y_x)_{e_{i_4}} (\mathbf y_x)_{e_{i_5}}
	\\	&\quad -\sqrt{a_{i_2}a_{i_3}}\, (\mathbf y_x)_{e_{i_2}} (\mathbf y_x)_{e_{i_3}}. 
\end{aligned}\]
Suppose, to the contrary, that $h(\mathbf y_x)<0$. If
\[\sqrt{a_{i_1}}\bigl|(\mathbf y_x)_{e_{i_1}}\bigr|=\max_{j\in\{1,2,3\}}\sqrt{a_{i_j}}\bigl|(\mathbf y_x)_{e_{i_j}}\bigr|,\]
set $\mathbf z_x:=|\mathbf y_x|$. Otherwise, let $t\in \{2,3\}$ such that \[\sqrt{a_{i_t}}\bigl|(\mathbf y_x)_{e_{i_t}}\bigr|=\max_{j\in\{1,2,3\}}\sqrt{a_{i_j}}\bigl|(\mathbf y_x)_{e_{i_j}}\bigr|,\]
and define $\mathbf z_x\in\mathbb R^5$ by
\[(\mathbf z_x)_{e_{i_j}}=\begin{cases}
	-|(\mathbf y_x)_{e_{i_j}}|, & j=\{1,t\}\\
	|(\mathbf y_x)_{e_{i_j}}|, &\text{otherwise}.\end{cases}\]
In either case,
\[h(\mathbf z_x)\ge0>h(\mathbf y_x),\]
while
\[\|\mathbf z_x\|=\|\mathbf y_x\|=1,\qquad\sum_{i=1}^{5} a_i (\mathbf z_x)_{e_i}^2=\sum_{i=1}^{5} a_i (\mathbf y_x)_{e_i}^2.\]
Consequently,
\[\begin{aligned}\mathbf z_x^{\top}M(x)\mathbf z_x
		&=	\sum_{i=1}^{5} a_i (\mathbf z_x)_{e_i}^2+\frac{2}{x+n-2}h(\mathbf z_x)\\
		&>\sum_{i=1}^{5} a_i (\mathbf y_x)_{e_i}^2+\frac{2}{x+n-2}h(\mathbf y_x)\\
		&=\mathbf y_x^{\top}M(x)\mathbf y_x\\
		&=g_1(x).\end{aligned}\]

\item {For the configurations (16).}

We fix an orientation satisfying {\rm (i)} and {\rm (ii)} such that the edges in $S$ are oriented as depicted in Table~\ref{tab-f}. Under this orientation, for any $x>0$,
\[
\begin{aligned}
	h(\mathbf y_x) &=  \sqrt{a_{i_2}a_{i_3}}\, (\mathbf y_x)_{e_{i_2}} (\mathbf y_x)_{e_{i_3}}+ \sqrt{a_{i_1}a_{i_3}}\, (\mathbf y_x)_{e_{i_1}} (\mathbf y_x)_{e_{i_3}}+ \sqrt{a_{i_4}a_{i_5}}\, (\mathbf y_x)_{e_{i_4}} (\mathbf y_x)_{e_{i_5}}
	\\	&\quad + \sqrt{a_{i_3}a_{i_4}}\, (\mathbf y_x)_{e_{i_3}} (\mathbf y_x)_{e_{i_4}}-\sqrt{a_{i_1}a_{i_2}}\, (\mathbf y_x)_{e_{i_1}} (\mathbf y_x)_{e_{i_2}}. 
\end{aligned}\]
 Suppose, to the contrary, that $h(\mathbf y_x)<0$. If
\[\sqrt{a_{i_3}}\bigl|(\mathbf y_x)_{e_{i_3}}\bigr|=\max_{j\in\{1,2,3\}}\sqrt{a_{i_j}}\bigl|(\mathbf y_x)_{e_{i_j}}\bigr|,\]
set $\mathbf z_x:=|\mathbf y_x|$. Otherwise, let $t\in \{1,3\}$ such that \[\sqrt{a_{i_t}}\bigl|(\mathbf y_x)_{e_{i_t}}\bigr|=\max_{j\in\{1,2,3\}}\sqrt{a_{i_j}}\bigl|(\mathbf y_x)_{e_{i_j}}\bigr|,\]
and  define $\mathbf z_x\in\mathbb R^5$ by
\[(\mathbf z_x)_{e_{i_j}}=\begin{cases}-|(\mathbf y_x)_{e_{i_j}}|, & j\in \{3,4,5,t\},\\|(\mathbf y_x)_{e_{i_j}}|, & \text{otherwise}.
\end{cases}
\]
In either case,
\[h(\mathbf z_x)\ge0>h(\mathbf y_x),\]
while
\[\|\mathbf z_x\|=\|\mathbf y_x\|=1,\qquad\sum_{i=1}^{5} a_i (\mathbf z_x)_{e_i}^2=\sum_{i=1}^{5} a_i (\mathbf y_x)_{e_i}^2.\]
Consequently,
\[\begin{aligned}\mathbf z_x^{\top}M(x)\mathbf z_x
		&=	\sum_{i=1}^{5} a_i (\mathbf z_x)_{e_i}^2+\frac{2}{x+n-2}h(\mathbf z_x)\\
		&>\sum_{i=1}^{5} a_i (\mathbf y_x)_{e_i}^2+\frac{2}{x+n-2}h(\mathbf y_x)\\
		&=\mathbf y_x^{\top}M(x)\mathbf y_x\\
		&=g_1(x).\end{aligned}\]

\item {For the configurations (17).}

We fix an orientation satisfying {\rm (i)} and {\rm (ii)} such that the edges in $S$ are oriented as depicted in Table~\ref{tab-f}. Under this orientation, for any $x>0$,
\[
\begin{aligned}
	h(\mathbf y_x) &=\sqrt{a_{i_4}a_{i_5}}\, (\mathbf y_x)_{e_{i_4}} (\mathbf y_x)_{e_{i_5}}+\sqrt{a_{i_2}a_{i_4}}\, (\mathbf y_x)_{e_{i_2}} (\mathbf y_x)_{e_{i_4}}+  \sqrt{a_{i_2}a_{i_3}}\, (\mathbf y_x)_{e_{i_2}} (\mathbf y_x)_{e_{i_3}}	\\	&\quad + \sqrt{a_{i_1}a_{i_4}}\, (\mathbf y_x)_{e_{i_1}} (\mathbf y_x)_{e_{i_4}} -\sqrt{a_{i_1}a_{i_2}}\, (\mathbf y_x)_{e_{i_1}} (\mathbf y_x)_{e_{i_2}}. 
\end{aligned}\]
Suppose, to the contrary, that $h(\mathbf y_x)<0$. If
\[\sqrt{a_{i_4}}\bigl|(\mathbf y_x)_{e_{i_4}}\bigr|
\neq
\min_{j\in\{1,2,4\}}
\sqrt{a_{i_j}}\bigl|(\mathbf y_x)_{e_{i_j}}\bigr|,
\]
set $\mathbf z_x:=|\mathbf y_x|$. Otherwise, define $\mathbf z_x\in\mathbb R^5$ by
\[(\mathbf z_x)_{e_{i_j}}=
\begin{cases}
	-|(\mathbf y_x)_{e_{i_j}}|, & j=1\\
	|(\mathbf y_x)_{e_{i_j}}|, &\text{otherwise}.
\end{cases}
\]
In either case,
\[h(\mathbf z_x)\ge0>h(\mathbf y_x),\]
while
\[\|\mathbf z_x\|=\|\mathbf y_x\|=1,\qquad\sum_{i=1}^{5} a_i (\mathbf z_x)_{e_i}^2=\sum_{i=1}^{5} a_i (\mathbf y_x)_{e_i}^2.\]
Consequently,
\[\begin{aligned}\mathbf z_x^{\top}M(x)\mathbf z_x
		&=	\sum_{i=1}^{5} a_i (\mathbf z_x)_{e_i}^2+\frac{2}{x+n-2}h(\mathbf z_x)\\
		&>\sum_{i=1}^{5} a_i (\mathbf y_x)_{e_i}^2+\frac{2}{x+n-2}h(\mathbf y_x)\\
		&=\mathbf y_x^{\top}M(x)\mathbf y_x\\
		&=g_1(x).\end{aligned}\]

\item {For the configurations (18).}

We fix an orientation satisfying {\rm (i)} and {\rm (ii)} such that the edges in $S$ are oriented as depicted in Table~\ref{tab-f}. Under this orientation, for any $x>0$,
\[
\begin{aligned}
	h(\mathbf y_x) &=\sqrt{a_{i_2}a_{i_5}}\, (\mathbf y_x)_{e_{i_2}} (\mathbf y_x)_{e_{i_5}}+\sqrt{a_{i_2}a_{i_4}}\, (\mathbf y_x)_{e_{i_2}} (\mathbf y_x)_{e_{i_4}}+  \sqrt{a_{i_2}a_{i_3}}\, (\mathbf y_x)_{e_{i_2}} (\mathbf y_x)_{e_{i_3}}	\\	&\quad +\sqrt{a_{i_1}a_{i_2}}\, (\mathbf y_x)_{e_{i_1}} (\mathbf y_x)_{e_{i_2}} + \sqrt{a_{i_1}a_{i_4}}\, (\mathbf y_x)_{e_{i_1}} (\mathbf y_x)_{e_{i_4}} -\sqrt{a_{i_1}a_{i_5}}\, (\mathbf y_x)_{e_{i_1}} (\mathbf y_x)_{e_{i_5}}	\\	&\quad - \sqrt{a_{i_3}a_{i_4}}\, (\mathbf y_x)_{e_{i_3}} (\mathbf y_x)_{e_{i_4}} . 
\end{aligned}\]
We claim that $h(\mathbf y_x)\ge0$. Suppose, to the contrary, that $h(\mathbf y_x)<0$. If \[\sqrt{a_{i_2}}\bigl|(\mathbf y_x)_{e_{i_2}}\bigr|=\max_{j\in\{1,2,5\}}\sqrt{a_{i_j}}\bigl|(\mathbf y_x)_{e_{i_j}}\bigr|=\max_{j\in\{2,3,4\}}\sqrt{a_{i_j}}\bigl|(\mathbf y_x)_{e_{i_j}}\bigr|,\] then set $\mathbf z_x:=|\mathbf y_x|$. 
If \[\sqrt{a_{i_2}}\bigl|(\mathbf y_x)_{e_{i_2}}\bigr|\ne\max_{j\in\{1,2,5\}}\sqrt{a_{i_j}}\bigl|(\mathbf y_x)_{e_{i_j}}\bigr|\ne \max_{j\in\{2,3,4\}}\sqrt{a_{i_j}}\bigl|(\mathbf y_x)_{e_{i_j}}\bigr|,\] then let $t_1\in \{1,5\}$ and $t_2\in \{3,4\}$ such that \[\sqrt{a_{i_{t_1}}}\bigl|(\mathbf y_x)_{e_{i_{t_1}}}\bigr|=\max_{j\in\{1,2,5\}}\sqrt{a_{i_j}}\bigl|(\mathbf y_x)_{e_{i_j}}\bigr|\text{ and }\sqrt{a_{i_{t_2}}}\bigl|(\mathbf y_x)_{e_{i_{t_2}}}\bigr|=\max_{j\in\{2,3,4\}}\sqrt{a_{i_j}}\bigl|(\mathbf y_x)_{e_{i_j}}\bigr|.\]
Define $\mathbf z_x\in\mathbb R^5$ by
\[(\mathbf z_x)_{e_{i_j}}=
\begin{cases}
	-|(\mathbf y_x)_{e_{i_j}}|, & j\in \{1,3,4,5\}\setminus\{t_1,t_2\}\\
	|(\mathbf y_x)_{e_{i_j}}|, & \text{otherwise}.
\end{cases}
\]
If \[\sqrt{a_{i_2}}\bigl|(\mathbf y_x)_{e_{i_2}}\bigr|=\max_{j\in\{1,2,5\}}\sqrt{a_{i_j}}\bigl|(\mathbf y_x)_{e_{i_j}}\bigr|\ne \max_{j\in\{2,3,4\}}\sqrt{a_{i_j}}\bigl|(\mathbf y_x)_{e_{i_j}}\bigr|,\] then let $t_2\in \{3,4\}$ such that \[\sqrt{a_{i_{t_2}}}\bigl|(\mathbf y_x)_{e_{i_{t_2}}}\bigr|=\max_{j\in\{2,3,4\}}\sqrt{a_{i_j}}\bigl|(\mathbf y_x)_{e_{i_j}}\bigr|,\] and define $\mathbf z_x\in\mathbb R^5$ by
\[(\mathbf z_x)_{e_{i_j}}=
\begin{cases}
	-|(\mathbf y_x)_{e_{i_j}}|, & j\in \{3,4\}\setminus\{t_2\}\\
	|(\mathbf y_x)_{e_{i_j}}|, & \text{otherwise}.
\end{cases}
\]
If \[\sqrt{a_{i_2}}\bigl|(\mathbf y_x)_{e_{i_2}}\bigr|= \max_{j\in\{2,3,4\}}\sqrt{a_{i_j}}\bigl|(\mathbf y_x)_{e_{i_j}}\bigr|\ne \max_{j\in\{1,2,5\}}\sqrt{a_{i_j}}\bigl|(\mathbf y_x)_{e_{i_j}}\bigr|,\] then let $t_1\in \{1,5\}$ such that \[\sqrt{a_{i_{t_1}}}\bigl|(\mathbf y_x)_{e_{i_{t_1}}}\bigr|=\max_{j\in\{1,2,5\}}\sqrt{a_{i_j}}\bigl|(\mathbf y_x)_{e_{i_j}}\bigr|,\] and define $\mathbf z_x\in\mathbb R^5$ by
\[(\mathbf z_x)_{e_{i_j}}=
\begin{cases}
	-|(\mathbf y_x)_{e_{i_j}}|, & j\in \{1,5\}\setminus\{t_1\}\\
	|(\mathbf y_x)_{e_{i_j}}|, & \text{otherwise}.
\end{cases}
\]
In all cases,
\[h(\mathbf z_x)\ge0>h(\mathbf y_x),\]
while
\[\|\mathbf z_x\|=\|\mathbf y_x\|=1,\qquad\sum_{i=1}^{5} a_i (\mathbf z_x)_{e_i}^2=\sum_{i=1}^{5} a_i (\mathbf y_x)_{e_i}^2.\]
Consequently,
\[\begin{aligned}\mathbf z_x^{\top}M(x)\mathbf z_x
		&=	\sum_{i=1}^{5} a_i (\mathbf z_x)_{e_i}^2+\frac{2}{x+n-2}h(\mathbf z_x)\\
		&>\sum_{i=1}^{5} a_i (\mathbf y_x)_{e_i}^2+\frac{2}{x+n-2}h(\mathbf y_x)\\
		&=\mathbf y_x^{\top}M(x)\mathbf y_x\\
		&=g_1(x).\end{aligned}\]

\item {For the configurations (19).}

We fix an orientation satisfying {\rm (i)} and {\rm (ii)} such that the edges in $S$ are oriented as depicted in Table~\ref{tab-f}. Under this orientation, for any $x>0$,
\[
\begin{aligned}
	h(\mathbf y_x) &=  \sqrt{a_{i_2}a_{i_3}}\, (\mathbf y_x)_{e_{i_2}} (\mathbf y_x)_{e_{i_3}}+ \sqrt{a_{i_3}a_{i_4}}\, (\mathbf y_x)_{e_{i_3}} (\mathbf y_x)_{e_{i_4}}+ \sqrt{a_{i_1}a_{i_4}}\, (\mathbf y_x)_{e_{i_1}} (\mathbf y_x)_{e_{i_4}}	\\	&\quad + \sqrt{a_{i_1}a_{i_3}}\, (\mathbf y_x)_{e_{i_1}} (\mathbf y_x)_{e_{i_3}}+ \sqrt{a_{i_2}a_{i_5}}\, (\mathbf y_x)_{e_{i_2}} (\mathbf y_x)_{e_{i_5}}
	-\sqrt{a_{i_1}a_{i_2}}\, (\mathbf y_x)_{e_{i_1}} (\mathbf y_x)_{e_{i_2}}. 
\end{aligned}\]
Suppose, to the contrary, that $h(\mathbf y_x)<0$. If
\[
\sqrt{a_{i_3}}\bigl|(\mathbf y_x)_{e_{i_3}}\bigr|\ge \sqrt{a_{i_1}}\bigl|(\mathbf y_x)_{e_{i_1}}\bigr|
\]
set $\mathbf z_x:=|\mathbf y_x|$. Otherwise, define $\mathbf z_x\in\mathbb R^5$ by
\[
(\mathbf z_x)_{e_{i_j}}
=\begin{cases}
	-|(\mathbf y_x)_{e_{i_j}}|, & j\in\{1,3,4\}\\
	|(\mathbf y_x)_{e_{i_j}}|, & \text{ otherwise}.
\end{cases}
\]
In either case,
\[h(\mathbf z_x)\ge0>h(\mathbf y_x),\]
while
\[\|\mathbf z_x\|=\|\mathbf y_x\|=1,\qquad\sum_{i=1}^{5} a_i (\mathbf z_x)_{e_i}^2=\sum_{i=1}^{5} a_i (\mathbf y_x)_{e_i}^2.\]
\[\begin{aligned}\mathbf z_x^{\top}M(x)\mathbf z_x
		&=	\sum_{i=1}^{5} a_i (\mathbf z_x)_{e_i}^2+\frac{2}{x+n-2}h(\mathbf z_x)\\
		&>\sum_{i=1}^{5} a_i (\mathbf y_x)_{e_i}^2+\frac{2}{x+n-2}h(\mathbf y_x)\\
		&=\mathbf y_x^{\top}M(x)\mathbf y_x\\
		&=g_1(x).\end{aligned}\]

\item {For the configurations (20).}

We fix an orientation satisfying {\rm (i)} and {\rm (ii)} such that the edges in $S$ are oriented as depicted in Table~\ref{tab-f}. Under this orientation, for any $x>0$,
\[
\begin{aligned}
	h(\mathbf y_x) &=\sqrt{a_{i_1}a_{i_5}}\, (\mathbf y_x)_{e_{i_1}} (\mathbf y_x)_{e_{i_5}} ++\sqrt{a_{i_1}a_{i_4}}\, (\mathbf y_x)_{e_{i_1}} (\mathbf y_x)_{e_{i_4}} +	\sqrt{a_{i_4}a_{i_5}}\, (\mathbf y_x)_{e_{i_4}} (\mathbf y_x)_{e_{i_5}}\\	&\quad +	\sqrt{a_{i_2}a_{i_5}}\, (\mathbf y_x)_{e_{i_2}} (\mathbf y_x)_{e_{i_5}}+\sqrt{a_{i_3}a_{i_4}}\, (\mathbf y_x)_{e_{i_3}} (\mathbf y_x)_{e_{i_4}}-\sqrt{a_{i_1}a_{i_3}}\, (\mathbf y_x)_{e_{i_1}} (\mathbf y_x)_{e_{i_3}}	\\	&\quad -\sqrt{a_{i_1}a_{i_2}}\, (\mathbf y_x)_{e_{i_1}} (\mathbf y_x)_{e_{i_2}}. 
\end{aligned}\]
Suppose, to the contrary, that $h(\mathbf y_x)<0$. If
\[\sqrt{a_{i_4}}\bigl|(\mathbf y_x)_{e_{i_4}}\bigr|+\sqrt{a_{i_5}}\bigl|(\mathbf y_x)_{e_{i_5}}\bigr|\ge \sqrt{a_{i_2}}\bigl|(\mathbf y_x)_{e_{i_2}}\bigr|+\sqrt{a_{i_3}}\bigl|(\mathbf y_x)_{e_{i_3}}\bigr|,\]
then set $\mathbf z_x:=|\mathbf y_x|$. Otherwise, define $\mathbf z_x\in\mathbb R^5$ by
\[(\mathbf z_x)_{e_{i_j}}=\begin{cases}
	-|(\mathbf y_x)_{e_{i_j}}|, & j=1\\
	|(\mathbf y_x)_{e_{i_j}}|, & \text{otherwise}.
\end{cases}\]
In either case,
\[h(\mathbf z_x)\ge0>h(\mathbf y_x),\]
while
\[\|\mathbf z_x\|=\|\mathbf y_x\|=1,\qquad\sum_{i=1}^{5} a_i (\mathbf z_x)_{e_i}^2=\sum_{i=1}^{5} a_i (\mathbf y_x)_{e_i}^2.\]
Consequently,
\[\begin{aligned}\mathbf z_x^{\top}M(x)\mathbf z_x
		&=	\sum_{i=1}^{5} a_i (\mathbf z_x)_{e_i}^2+\frac{2}{x+n-2}h(\mathbf z_x)\\
		&>\sum_{i=1}^{5} a_i (\mathbf y_x)_{e_i}^2+\frac{2}{x+n-2}h(\mathbf y_x)\\
		&=\mathbf y_x^{\top}M(x)\mathbf y_x\\
		&=g_1(x).\end{aligned}\]

\item {For the configurations (21).}

We fix an orientation satisfying {\rm (i)} and {\rm (ii)} such that the edges in $S$ are oriented as depicted in Table~\ref{tab-f}. Under this orientation, for any $x>0$,
\[
\begin{aligned}
	h(\mathbf y_x) &=\sqrt{a_{i_1}a_{i_5}}\, (\mathbf y_x)_{e_{i_1}} (\mathbf y_x)_{e_{i_5}} +\sqrt{a_{i_1}a_{i_4}}\, (\mathbf y_x)_{e_{i_1}} (\mathbf y_x)_{e_{i_4}} +\sqrt{a_{i_4}a_{i_5}}\, (\mathbf y_x)_{e_{i_4}} (\mathbf y_x)_{e_{i_5}}\\	&\quad +	\sqrt{a_{i_2}a_{i_5}}\, (\mathbf y_x)_{e_{i_2}} (\mathbf y_x)_{e_{i_5}}+\sqrt{a_{i_3}a_{i_4}}\, (\mathbf y_x)_{e_{i_3}} (\mathbf y_x)_{e_{i_4}}+	\sqrt{a_{i_2}a_{i_3}}\, (\mathbf y_x)_{e_{i_2}} (\mathbf y_x)_{e_{i_3}}\\	&\quad-\sqrt{a_{i_1}a_{i_3}}\, (\mathbf y_x)_{e_{i_1}} (\mathbf y_x)_{e_{i_3}} -\sqrt{a_{i_1}a_{i_2}}\, (\mathbf y_x)_{e_{i_1}} (\mathbf y_x)_{e_{i_2}}. 
\end{aligned}\]
 Suppose, to the contrary, that $h(\mathbf y_x)<0$. If \[\sqrt{a_{i_4}}\bigl|(\mathbf y_x)_{e_{i_4}}\bigr|+\sqrt{a_{i_5}}\bigl|(\mathbf y_x)_{e_{i_5}}\bigr|\ge \sqrt{a_{i_2}}\bigl|(\mathbf y_x)_{e_{i_2}}\bigr|+\sqrt{a_{i_3}}\bigl|(\mathbf y_x)_{e_{i_3}}\bigr|,\] set $\mathbf z_x:=|\mathbf y_x|$. Otherwise, define $\mathbf z_x\in\mathbb R^5$ by
\[(\mathbf z_x)_{e_{i_j}}=\begin{cases}
	-|(\mathbf y_x)_{e_{i_j}}|, & j=1\\
	|(\mathbf y_x)_{e_{i_j}}|, & j=2,3,4,5.
\end{cases}
\]
In either case,
\[h(\mathbf z_x)\ge0>h(\mathbf y_x),\]
while
\[\|\mathbf z_x\|=\|\mathbf y_x\|=1,\qquad\sum_{i=1}^{5} a_i (\mathbf z_x)_{e_i}^2=\sum_{i=1}^{5} a_i (\mathbf y_x)_{e_i}^2.\]
Consequently,
\[\begin{aligned}\mathbf z_x^{\top}M(x)\mathbf z_x
		&=	\sum_{i=1}^{5} a_i (\mathbf z_x)_{e_i}^2+\frac{2}{x+n-2}h(\mathbf z_x)\\
		&>\sum_{i=1}^{5} a_i (\mathbf y_x)_{e_i}^2+\frac{2}{x+n-2}h(\mathbf y_x)\\
		&=\mathbf y_x^{\top}M(x)\mathbf y_x\\
		&=g_1(x).\end{aligned}\]

\item {For the configurations (22).}

We fix an orientation satisfying {\rm (i)} and {\rm (ii)} such that the edges in $S$ are oriented as shown in Table~\ref{tab-f}. Under this orientation,
\[
\begin{aligned}
	h(\mathbf y_x)
	&=
	\sqrt{a_{i_1}a_{i_4}}\,(\mathbf y_x)_{e_{i_1}}(\mathbf y_x)_{e_{i_4}}
	+\sqrt{a_{i_4}a_{i_5}}\,(\mathbf y_x)_{e_{i_4}}(\mathbf y_x)_{e_{i_5}}
	+\sqrt{a_{i_3}a_{i_5}}\,(\mathbf y_x)_{e_{i_3}}(\mathbf y_x)_{e_{i_5}}
	\\
	&\quad
	+\sqrt{a_{i_2}a_{i_5}}\,(\mathbf y_x)_{e_{i_2}}(\mathbf y_x)_{e_{i_5}}
	+\sqrt{a_{i_1}a_{i_5}}\,(\mathbf y_x)_{e_{i_1}}(\mathbf y_x)_{e_{i_5}}
	-\sqrt{a_{i_1}a_{i_3}}\,(\mathbf y_x)_{e_{i_1}}(\mathbf y_x)_{e_{i_3}}
	\\
	&\quad
	-\sqrt{a_{i_1}a_{i_2}}\,(\mathbf y_x)_{e_{i_1}}(\mathbf y_x)_{e_{i_2}}
	-\sqrt{a_{i_2}a_{i_3}}\,(\mathbf y_x)_{e_{i_2}}(\mathbf y_x)_{e_{i_3}} .
\end{aligned}
\]
Suppose, to the contrary, that $h(\mathbf y_x)<0$.
If \[\sqrt{a_{i_5}}\bigl|(\mathbf y_x)_{e_{i_5}}\bigr|=\max_{j\in\{1,2,3,5\}}\sqrt{a_{i_j}}\bigl|(\mathbf y_x)_{e_{i_j}}\bigr|,\]
set $\mathbf z_x:=|\mathbf y_x|$.
If \[\sqrt{a_{i_1}}\bigl|(\mathbf y_x)_{e_{i_1}}\bigr|=\max_{j\in\{1,2,3,5\}}\sqrt{a_{i_j}}\bigl|(\mathbf y_x)_{e_{i_j}}\bigr|,\]
define $\mathbf z_x\in\mathbb R^5$ by
\[(\mathbf z_x)_{e_{i_j}}=\begin{cases}-|(\mathbf y_x)_{e_{i_j}}|, & j\in\{1,4,5\},\\[1mm]	\phantom{-}|(\mathbf y_x)_{e_{i_j}}|, & \text{otherwise}.
\end{cases}\]
Finally, suppose that $ t\in\{2,3\}$ satisfies
\[\sqrt{a_{i_t}}\bigl|(\mathbf y_x)_{e_{i_t}}\bigr|=\max_{j\in\{1,2,3,5\}}\sqrt{a_{i_j}}\bigl|(\mathbf y_x)_{e_{i_j}}\bigr|.\]
If
\[
\sqrt{a_{i_1}}\bigl|(\mathbf y_x)_{e_{i_1}}\bigr|
\ge
\sqrt{a_{i_5}}\bigl|(\mathbf y_x)_{e_{i_5}}\bigr|,
\]
define
\[
(\mathbf z_x)_{e_{i_j}}
=
\begin{cases}
	-|(\mathbf y_x)_{e_{i_j}}|, & j=t,\\[1mm]
	\phantom{-}|(\mathbf y_x)_{e_{i_j}}|, & \text{otherwise},
\end{cases}
\]
whereas if
\[
\sqrt{a_{i_1}}\bigl|(\mathbf y_x)_{e_{i_1}}\bigr|
<
\sqrt{a_{i_5}}\bigl|(\mathbf y_x)_{e_{i_5}}\bigr|,
\]
define
\[
(\mathbf z_x)_{e_{i_j}}
=
\begin{cases}
	-|(\mathbf y_x)_{e_{i_j}}|, & j\in\{1,t,4,5\},\\[1mm]
	\phantom{-}|(\mathbf y_x)_{e_{i_j}}|, & \text{otherwise}.
\end{cases}
\]
A direct verification shows that, in each of the above cases,
\[
h(\mathbf z_x)\ge0>h(\mathbf y_x).
\]
Moreover, since $\|\mathbf z_x\|=\|\mathbf y_x\|=1$ and $\sum_{i=1}^{5} a_i (\mathbf z_x)_{e_i}^2=\sum_{i=1}^{5} a_i (\mathbf y_x)_{e_i}^2$,
we have
\[\begin{aligned}\mathbf z_x^{\top}M(x)\mathbf z_x
		&=	\sum_{i=1}^{5} a_i (\mathbf z_x)_{e_i}^2+\frac{2}{x+n-2}h(\mathbf z_x)\\
		&>\sum_{i=1}^{5} a_i (\mathbf y_x)_{e_i}^2+\frac{2}{x+n-2}h(\mathbf y_x)\\
		&=\mathbf y_x^{\top}M(x)\mathbf y_x\\
		&=g_1(x).\end{aligned}\]

\item {For the configurations (23).}

We fix an orientation satisfying {\rm (i)} and {\rm (ii)} such that the edges in $S$ are oriented as shown in Table~\ref{tab-f}. Under this orientation,
\[
\begin{aligned}
	h(\mathbf y_x)
	&=\sqrt{a_{i_4}a_{i_5}}\,(\mathbf y_x)_{e_{i_4}}(\mathbf y_x)_{e_{i_5}}+\sqrt{a_{i_3}a_{i_5}}\,(\mathbf y_x)_{e_{i_3}}(\mathbf y_x)_{e_{i_5}}+\sqrt{a_{i_2}a_{i_5}}\,(\mathbf y_x)_{e_{i_2}}(\mathbf y_x)_{e_{i_5}}\\
	&\quad+\sqrt{a_{i_1}a_{i_5}}\,(\mathbf y_x)_{e_{i_1}}(\mathbf y_x)_{e_{i_5}}-\sqrt{a_{i_3}a_{i_4}}\,(\mathbf y_x)_{e_{i_3}}(\mathbf y_x)_{e_{i_4}}-\sqrt{a_{i_2}a_{i_4}}\,(\mathbf y_x)_{e_{i_2}}(\mathbf y_x)_{e_{i_4}}\\
	&\quad-\sqrt{a_{i_1}a_{i_4}}\,(\mathbf y_x)_{e_{i_1}}(\mathbf y_x)_{e_{i_4}}-\sqrt{a_{i_1}a_{i_3}}\,(\mathbf y_x)_{e_{i_1}}(\mathbf y_x)_{e_{i_3}}
	-\sqrt{a_{i_1}a_{i_2}}\,(\mathbf y_x)_{e_{i_1}}(\mathbf y_x)_{e_{i_2}}
	\\
	&\quad-\sqrt{a_{i_2}a_{i_3}}\,(\mathbf y_x)_{e_{i_2}}(\mathbf y_x)_{e_{i_3}}.
\end{aligned}
\]
Suppose, to the contrary, that
$h(\mathbf y_x)<0$.
First consider the case \[\sqrt{a_{i_5}}\bigl|(\mathbf y_x)_{e_{i_5}}\bigr|=\max_{j\in\{1,2,3,4,5\}}\sqrt{a_{i_j}}\bigl|(\mathbf y_x)_{e_{i_j}}\bigr|.\] If \[\sqrt{a_{i_5}}\bigl|(\mathbf y_x)_{e_{i_5}}\bigr|\ge \sum_{j=2}^{4}\sqrt{a_{i_j}}\bigl|(\mathbf y_x)_{e_{i_j}}\bigr|,\]
then set $\mathbf z_x\mid =|\mathbf y_x|$.
Otherwise, 
define $\mathbf z_x\in\mathbb R^5$ by
\[(\mathbf z_x)_{e_{i_j}}=\begin{cases}	-|(\mathbf y_x)_{e_{i_j}}|, & j=1,\\[1mm]	|(\mathbf y_x)_{e_{i_j}}|, & \text{otherwise}.
\end{cases}
\]
Next consider the case \[\sqrt{a_{i_5}}\bigl|(\mathbf y_x)_{e_{i_5}}\bigr|\ne \max_{j\in\{1,2,3,4,5\}}\sqrt{a_{i_j}}\bigl|(\mathbf y_x)_{e_{i_j}}\bigr|,\] and choose $t\in\{1,2,3,4\}$ such that
\[\sqrt{a_{i_t}}\bigl|(\mathbf y_x)_{e_{i_t}}\bigr|=\max_{j\in\{1,2,3,4,5\}}\sqrt{a_{i_j}}\bigl|(\mathbf y_x)_{e_{i_j}}\bigr|.\]
If \[\sqrt{a_{i_t}}\bigl|(\mathbf y_x)_{e_{i_t}}\bigr|\ge \sum_{j\in\{1,2,3,4\}\setminus\{t\}}\sqrt{a_{i_j}}\bigl|(\mathbf y_x)_{e_{i_j}}\bigr|,\]
then define $\mathbf z_x\in\mathbb R^5$ by
\[(\mathbf z_x)_{e_{i_j}}=\begin{cases}	-|(\mathbf y_x)_{e_{i_j}}|, & j=\in \{t,5\},\\[1mm]	|(\mathbf y_x)_{e_{i_j}}|, & \text{otherwise}.
\end{cases}
\]
Otherwise, 
define $\mathbf z_x\in\mathbb R^5$ by
\[(\mathbf z_x)_{e_{i_j}}=\begin{cases}	-|(\mathbf y_x)_{e_{i_j}}|, & j=t,\\[1mm]	|(\mathbf y_x)_{e_{i_j}}|, & \text{otherwise}.
\end{cases}
\]
A direct verification shows that, in each of the above cases, $h(\mathbf z_x)\ge 0>h(\mathbf y_x)$.
Moreover, since $\|\mathbf z_x\|=\|\mathbf y_x\|=1$ and $\sum_{i=1}^{5} a_i (\mathbf z_x)_{e_i}^2=\sum_{i=1}^{5} a_i (\mathbf y_x)_{e_i}^2$, we have
\[\begin{aligned}\mathbf z_x^{\top}M(x)\mathbf z_x
		&=	\sum_{i=1}^{5} a_i (\mathbf z_x)_{e_i}^2+\frac{2}{x+n-2}h(\mathbf z_x)\\
		&>\sum_{i=1}^{5} a_i (\mathbf y_x)_{e_i}^2+\frac{2}{x+n-2}h(\mathbf y_x)\\
		&=\mathbf y_x^{\top}M(x)\mathbf y_x\\
		&=g_1(x).\end{aligned}\]

\item {For the configurations (24).}

We fix an orientation satisfying {\rm (i)} and {\rm (ii)} such that the edges in $S$ are oriented as depicted in Table~\ref{tab-f}. Under this orientation, for any $x>0$,
\[
\begin{aligned}
	h(\mathbf y_x) &=\sqrt{a_{i_4}a_{i_5}}\, (\mathbf y_x)_{e_{i_4}} (\mathbf y_x)_{e_{i_5}}+\sqrt{a_{i_1}a_{i_5}}\, (\mathbf y_x)_{e_{i_1}} (\mathbf y_x)_{e_{i_5}}+  \sqrt{a_{i_1}a_{i_3}}\, (\mathbf y_x)_{e_{i_1}} (\mathbf y_x)_{e_{i_3}}	\\	&\quad + \sqrt{a_{i_2}a_{i_5}}\, (\mathbf y_x)_{e_{i_2}} (\mathbf y_x)_{e_{i_5}}+\sqrt{a_{i_3}a_{i_4}}\, (\mathbf y_x)_{e_{i_3}} (\mathbf y_x)_{e_{i_4}} -\sqrt{a_{i_1}a_{i_2}}\, (\mathbf y_x)_{e_{i_1}} (\mathbf y_x)_{e_{i_2}}. 
\end{aligned}\]
Suppose, to the contrary, that $h(\mathbf y_x)<0$. If
\[
\sqrt{a_{i_5}}\bigl|(\mathbf y_x)_{e_{i_5}}\bigr|
\neq
\min_{j\in\{1,2,5\}}
\sqrt{a_{i_j}}\bigl|(\mathbf y_x)_{e_{i_j}}\bigr|,
\]
set $\mathbf z_x:=|\mathbf y_x|$. Otherwise, define $\mathbf z_x\in\mathbb R^5$ by
\[
(\mathbf z_x)_{e_{i_j}}
=
\begin{cases}
	|(\mathbf y_x)_{e_{i_j}}|, & j=1,3,4,5\\
	-|(\mathbf y_x)_{e_{i_j}}|, & j=2.
\end{cases}
\]
In either case,
\[h(\mathbf z_x)\ge0>h(\mathbf y_x),\]
while
\[\|\mathbf z_x\|=\|\mathbf y_x\|=1,\qquad\sum_{i=1}^{5} a_i (\mathbf z_x)_{e_i}^2=\sum_{i=1}^{5} a_i (\mathbf y_x)_{e_i}^2.\]
\[\begin{aligned}\mathbf z_x^{\top}M(x)\mathbf z_x
		&=	\sum_{i=1}^{5} a_i (\mathbf z_x)_{e_i}^2+\frac{2}{x+n-2}h(\mathbf z_x)\\
		&>\sum_{i=1}^{5} a_i (\mathbf y_x)_{e_i}^2+\frac{2}{x+n-2}h(\mathbf y_x)\\
		&=\mathbf y_x^{\top}M(x)\mathbf y_x\\
		&=g_1(x).\end{aligned}\]

\item {For the configurations (26).}

We fix an orientation satisfying {\rm (i)} and {\rm (ii)} such that the edges in $S$ are oriented as shown in Table~\ref{tab-f}. Under this orientation,
\[
\begin{aligned}
	h(\mathbf y_x)
	&=\sqrt{a_{i_4}a_{i_5}}\,(\mathbf y_x)_{e_{i_4}}(\mathbf y_x)_{e_{i_5}}
	+\sqrt{a_{i_3}a_{i_4}}\,(\mathbf y_x)_{e_{i_3}}(\mathbf y_x)_{e_{i_4}}
	+\sqrt{a_{i_1}a_{i_4}}\,(\mathbf y_x)_{e_{i_1}}(\mathbf y_x)_{e_{i_4}}\\
	&\quad
	+\sqrt{a_{i_2}a_{i_4}}\,(\mathbf y_x)_{e_{i_2}}(\mathbf y_x)_{e_{i_4}}
	-\sqrt{a_{i_1}a_{i_3}}\,(\mathbf y_x)_{e_{i_1}}(\mathbf y_x)_{e_{i_3}}\\
	&\quad
	-\sqrt{a_{i_1}a_{i_2}}\,(\mathbf y_x)_{e_{i_1}}(\mathbf y_x)_{e_{i_2}}
	-\sqrt{a_{i_2}a_{i_3}}\,(\mathbf y_x)_{e_{i_2}}(\mathbf y_x)_{e_{i_3}}.
\end{aligned}
\]
Suppose, to the contrary, that
$h(\mathbf y_x)<0$.
If \[\sqrt{a_{i_4}}\bigl|(\mathbf y_x)_{e_{i_4}}\bigr|=\max_{j\in\{1,2,3,4\}}\sqrt{a_{i_j}}\bigl|(\mathbf y_x)_{e_{i_j}}\bigr|,\]
then set $\mathbf z_x\mid =|\mathbf y_x|$.
Otherwise, choose $t\in\{1,2,3\}$ such that
\[\sqrt{a_{i_t}}\bigl|(\mathbf y_x)_{e_{i_t}}\bigr|=\max_{j\in\{1,2,3,4\}}\sqrt{a_{i_j}}\bigl|(\mathbf y_x)_{e_{i_j}}\bigr|,\] and define $\mathbf z_x\in\mathbb R^5$ by
\[(\mathbf z_x)_{e_{i_j}}=\begin{cases}	-|(\mathbf y_x)_{e_{i_j}}|, & j\in\{t,4,5\},\\[1mm]	|(\mathbf y_x)_{e_{i_j}}|, & \text{otherwise}.
\end{cases}
\]
A direct verification shows that, in each of the above cases, $h(\mathbf z_x)\ge 0>h(\mathbf y_x)$.
Moreover, since $\|\mathbf z_x\|=\|\mathbf y_x\|=1$ and $\sum_{i=1}^{5} a_i (\mathbf z_x)_{e_i}^2=\sum_{i=1}^{5} a_i (\mathbf y_x)_{e_i}^2$, we \[\begin{aligned}\mathbf z_x^{\top}M(x)\mathbf z_x
		&=	\sum_{i=1}^{5} a_i (\mathbf z_x)_{e_i}^2+\frac{2}{x+n-2}h(\mathbf z_x)\\
		&>\sum_{i=1}^{5} a_i (\mathbf y_x)_{e_i}^2+\frac{2}{x+n-2}h(\mathbf y_x)\\
		&=\mathbf y_x^{\top}M(x)\mathbf y_x\\
		&=g_1(x).\end{aligned}\]
\end{itemize}
\end{document}